\DeclareMathAlphabet{\mathpzc}{OT1}{pzc}{L}{it} 
\def\cal#1{{\mathcal #1}} 
  \newtheorem{Th}{Theorem}[section]
  \newtheorem{Prop}[Th]{Proposition}
   \newtheorem{Ass}[Th]{Assumption}
  \newtheorem{Lem}[Th]{Lemma}
  \newtheorem{Cor}[Th]{Corollary}
  \newtheorem{Rem}[Th]{Remark}
  \newtheorem{Ex}[Th]{Example}
\def\e{\varepsilon}
\def\rarr{\to}
\def\N{\mathbb{N}}
\def\ov{\overline}
\def\ANR{\mathrm{ANR}}
\def\d{\,\mathrm{d}}
\def\dist{\mathrm{dist}}
\def\Fix{\mathrm{Fix}}
\def\ind{\mathrm{ind}}
\def\deg{\mathrm{deg}}
\def\la{\langle}
\def\ra{\rangle}
\def\id{\mathrm{id}}
\def\supp{\mathrm{supp}\,}
\def\cl{\mathrm{cl}\,}
\def\vp{\varphi}
\def\R{\mathbb{R}}
\def\Q{\mathbb{Q}}
\def\Gr{\mathrm{Gr}}
\def\l{\lambda}
\def\cal#1{{\mathcal #1}}
\def\part{\partial}
\def\Coin{\mathrm{Coin}}
\newcommand{\splus}{\sigma_+}
  \newcommand{\multi}{\multimap}
  \newcommand{\eps}{\varepsilon}
  \newcommand{\be}{\begin{equation}}
  \newcommand{\ee}{\end{equation}}
  \newcommand{\bea}{\begin{eqnarray}}
  \newcommand{\eea}{\end{eqnarray}}
  \newcommand{\beab}{\begin{eqnarray*}}
  \newcommand{\eeab}{\end{eqnarray*}}
\renewcommand{\leq}{\leqslant}
\renewcommand{\geq}{\geqslant}
\author[W. Kryszewski]{Wojciech Kryszewski}
\address{Institute of Mathematics, Lodz University of Technology, Lodz, Poland}
\email{wojciech.kryszewski@p.lodz.pl}
\author[M. Maciejewski]{Mateusz Maciejewski}
\address{Faculty of Mathematics and Computer Science. Nicolaus Copernicus university in Toru\'n, Poland}
\email{mateusz.maciejewski@mat.umk.pl}
\title[Degree for weakly upper semicontinuous perturbations]{Degree for weakly upper semicontinuous perturbations of quasi-$m$-accretive operators}
\date{\today}
\subjclass[2010]{}
\keywords{}
\numberwithin{equation}{section}
\begin{document}
\pagestyle{myheadings}
\baselineskip15pt

\begin{abstract}
In the paper we provide the construction of a coincidence degree being a  homotopy invariant detecting the existence of solutions of equations or inclusions of
the form $Ax\in F(x)$, $x\in U$, where $A\colon D(A)\multimap
 E$ is an $m$-accretive operator in a Banach space $ E$,
$F\colon K\multimap  E$ is a weakly upper semicontinuous set-valued map constrained to
an open subset $U$ of a closed set $K\subset E$.
Two different approaches will be presented. The theory is applied
to show the existence of nontrivial positive solutions of some nonlinear
second order partial differential equations with discontinuities.
\end{abstract}

\maketitle

\section{Introduction}
We study the existence of solution to the problem 
\begin{equation}\label{problem1}0\in -Au+F(u),\;u\in U,\end{equation}
where $A\colon D(A)\multi E$ is a {\em quasi-$m$-accretive operator} in a (real) Banach space $ E$, $F\colon K\multi  E$ is a {\em set-valued map}, subject to constraints  in an open set $U\subset K\subset E$ of the closed set $K$ of state constraints. Such problems appear as an abstract setting of steady states of nonlinear evolution processes of the reaction-diffusion type, where $A$ corresponds to the (nonlinear) diffusion,  while a (possibly set-valued) $F$ represents a reaction term. It is reasonable to speak of solution  to \eqref{problem1} as of {\em $A$-equilibria of $F$}. As an example we will discuss a boundary-value problem for a coupled PDE system 
\begin{equation}\label{reac-diff}
\left\{\begin{array}{l} -\Delta(\rho\circ u)(x)\in\vp\big(u(x)\big),\; u\in\R^M,\;x\in\Omega\\
u_i(x)\geq 0,\;x\in\Omega,\;i=1,...,M,\\
u|_{\partial\Omega}=0,\end{array}\right.
\end{equation}
where $\Omega\subset\R^N$ is open, the unknown  $u=(u_1,...,u_M)\in\R^M$,  $\Delta$ denotes the vectorial Laplace
operator, $\rho\colon \R^M_+\rarr\R^M_+$ and $\vp\colon \R^M_+\to\R^M$ is a set-valued perturbation.

\indent If $A\equiv 0$ (resp. $A=I$ is the identity) and $K= E$, i.e., in  the absence of constraints, problems like \eqref{problem1}, concerning  the existence of the {\em equilibria}, i.e., zeros (resp. {\em fixed points}) of set-valued maps are major topics of research and were studied by numerous authors for a long time. Classical invariants of Brouwer, Leray-Schauder and their relatives i.e.,  topological degrees or the fixed point indices, are successfully applied to get equilibria or fixed points and study their behavior. Problems with constraints
(i.e., when $K\neq E$) and their homotopy counterparts are not that well-recognized, especially when the constraining set $K$ is neither convex, smooth nor even has nonempty interior with $F$ taking values {\em outside} $K$. i.e., $F$ is {\em not} a self-map of $K$. It is to observe that the very existence results are related to some classical issues in analysis (see a survey \cite{Mawhin, Krysz},  and, e.g., \cite{KS}). In \cite{b:C-K} the existence of equilibria of a weakly tangent set-valued $F\colon U\multi E$ on locally compact {\em $\cal L$-retracts} $K$ has been addressed. Simple examples show that neither the tangency assumption nor the structural conditions concerning the constraint set $K$ can be omitted. The advantage is that the class of $\cal L$-retracts is broad. On the other hand, results from \cite{b:C-K}, being of finite dimensional nature (the local compactness of the domain) are not sufficient for applications.\\
\indent The general problem \eqref{problem1} without constraints was studied by many different authors (see, e.g., \cite{Kartsatos, CC, Asfaw}), but the methods fail in the presence of constraints. The constrained problem involving a resolvent compact $m$-accretive operator $A$  and a {\em continuous single-valued} $F$ was studied in \cite{b:C-K2}, and a degree, having its roots in \cite{Chen}, detecting coincidences of $A$ and $F$ was introduced.  An attitude for usc, i.e., {\em upper semicontinuous} set-valued $F$ with {\em compact} convex values, started in \cite{b:BEM-K}, via single-valued  approximations  leads to a strict generalization of results from \cite{b:C-K2}. Here, however, we come across another difficulty since  when studying PDI  \eqref{reac-diff}, even if $\vp$ is  usc with compact convex values, the Nemytskii operator associated to $\vp$ is only weakly usc  and never has compact values.\\
\indent This motivates us to consider \eqref{problem1} with a resolvent compact quasi-$m$-accretive $A\colon D(A)\to E$ (see a comment in Remark \ref{uwzal} (iv)), an $\cal L$-retract $K$, an open (in $K$) $U\subset K$ and a weakly usc map $F\colon U\multi E$ with convex weakly compact values (we shall see that such assumptions are optimal for the viewpoint of applications). We look for $A$-equilibria of $F$, i.e., vectors $u\in U\cap D(A)$ such that $0\in -Au+F(u)$ (equivalently $Au\cap F(u)\neq\emptyset$). Let us briefly explain the idea hidden behind our approach. In case without constraints it has been started in \cite{Chen}: eq. \eqref{problem1} is  replaced by an  equivalent  fixed-point  problem
\begin{equation}\label{problem2}u\in (I+\lambda A)^{-1}\big(u+\lambda F(u)\big), \;u\in U,\;\l>0,\end{equation}
where $(I+\l A)^{-1}$ is the {\em resolvent} of $A$. In order  to deal appropriately with constraints, we suppose that $K$ is {\em invariant} with respect to resolvents of $A$, we employ the so-called {\em weak tangency} of $F$ and an approximation approach combined  with some properties of $\cal L$-retracts.\\
\indent The paper is organized as follows. After this introduction we provide preliminaries concerning terminology, notations and auxiliary results. The second section
provides the construction of a {\em constrained topological degree}; the third section is devoted to the discussion of \eqref{reac-diff} and its relatives. It seems that even though the construction is quite long and tedious, the degree we present is a convenient tool for various problems in the field of differential equations and/or inclusions and variational inequalities involving set-valued maps that arise in the evolution processes.

\section{Preliminaries}

\noindent {\sc Notation:} The notation used throughout the paper is standard. In particular $\la x,y\ra$ is the scalar product of $x,y\in\R^N$ and $|x|=\sqrt{\la x,x\ra}$ stands for the norm of $x$. The use of function spaces ($L^p$, Sobolev etc.), linear (unbounded in general) operators in Banach spaces, $C_0$ semigroups is standard. Given a metric space $(X,d)$, $K\subset X$ and $x\in X$, $d_K(x)=\dist(x,K):=\inf_{y\in K}d(x,y)$, by $\ov K$  and $\part K$ we denote the closure and the boundary of $A$; if $x\in K$, we write $y\stackrel{K}{\rarr} x$ if $y\rarr x$ and $y\in K$.\\
\indent  $( E,\|\cdot\|)$ is a real Banach space, $B:=\{x\in E\mid \|x\|<1\}$ (resp $\ov B$) is the unit open (resp. closed) ball in $ E$; $ E^*$  stands for the  dual of $ E$; if $x\in  E$, $p\in E^*$, then $\la x,p\ra:=p(x)$; by default $ E^*$ is normed. $ E^*_w$ stands for $ E^*$ endowed with the weak topology; by $\rightharpoonup$ we denote the weak convergence. $B(u_0,r)$ \big(resp. $D(u_0,r)$\big) is the open  (resp. closed) ball around $u_0\in E$ of radius $r>0$. Note that $B(u_0,r)=u_0+rB$.

\noindent {\sc Set-valued maps:}\label{set-val}  Terminology in set-valued analysis is as in  \cite[Sec. 3.1]{A-E} or \cite[Sec. 1.3]{A-F}: $F\colon X\multi E$ is a set-valued map $F$ defined on a metric space $X$ with at least {\em closed} values in $ E$;  $\Gr(F):=\{(x,y)\in X\times E\mid y\in F(x)\}$ is the {\em graph} of $F$. A map $F\colon X\multi E$ is  {\em $H$-usc} ($H$ stands for `Hausdorff' and usc abbreviates `upper semicontinuous') if for any $x\in X$ and $\eps>0$ there is $\delta>0$ such that
$F(y)\subset F(x)+\eps B=\{z\in E\mid d\big(z,F(x)\big)<\eps\}$ if $y\in X$ and $d(y,x)<\delta$. We collect some well-known facts concerning $H$-usc maps:\\
\indent (a) An usc map is $H$-usc and an $H$-usc map with compact values is usc.\\
\indent (b)  An $H$-usc map with weakly compact values  is {\em weakly} usc, i.e., usc with respect to $ E^*_w$.\\
\indent (c) If $F$ is weakly usc, then it is uhc (i.e., {\em upper hemicontinuous}, see \cite[Sec 3.2]{A-E}). An uhc map with  convex weakly compact values is weakly usc. \\
\indent (d)  If $F$ has convex weakly compact values, then it is weakly usc if and only if given a sequences $x_n\to x$ in $X$ and $y_n\in F(x_n)$ for all $n\in\N$, there is a subsequence $y_{n_k}\rightharpoonup y\in F(x)$. In particular such maps are {\em locally bounded}, i.e., for any $x\in X$, there is $\eps>0$ such that $\sup\{\|v\|\mid v\in F(y), d(x,y)<\eps\}<\infty$.

Below we strongly rely on the following  lemmata.
\begin{Lem}\label{app1} {\em (comp. \cite[Lemma 3.2]{KS})} Let $X$ be a metric space, $\Phi\colon X\multi E$ an $H$-usc set-valued map with convex values and let $\xi\colon X\times E\to \R$ be such that for any $(x,v)$, $\xi(\cdot,v)$ is usc (as a real function) and $\xi(x,\cdot)$ is convex. If for $x\in X$, there is $v_x\in\Phi(x)$ such that $\xi(x,v_x)\leq 0$, then for any $\eps>0$ and a continuous function $\eta\colon X\to (0,\infty)$ there is a continuous $f\colon X\to E$ such that $f(x)\in\Phi\Big(B\big(x,\eta(x)\big)\Big) +\eta(x)B$ and $\xi(x,f(x))<\eps$ for $x\in X$.
\end{Lem}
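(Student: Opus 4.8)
The plan is to construct the approximation $f$ by a partition-of-unity argument, the standard device for producing continuous selections/approximations of upper semicontinuous maps. First I would fix $\eps>0$ and the continuous gauge $\eta\colon X\to(0,\infty)$. For each $x\in X$, using the hypothesis I pick $v_x\in\Phi(x)$ with $\xi(x,v_x)\le 0$. The $H$-upper semicontinuity of $\Phi$ together with the separate upper semicontinuity of $\xi(\cdot,v)$ and the continuity of $\eta$ gives, for each $x$, a radius $r_x\in(0,\eta(x)]$ such that for all $y\in B(x,r_x)$ one has simultaneously $\Phi(y)\subset\Phi(x)+ \tfrac12\eta(y)B$ (shrinking $r_x$ further so that $\tfrac12\eta(x)<\eta(y)$ on the ball, using continuity of $\eta$) and $\xi(y,v_x)<\eps$. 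Here the first inclusion is where I must be slightly careful: $H$-usc is stated with a fixed $\eps$ at a fixed point, so I use it at $x$ with tolerance $\tfrac12\eta(x)$ and then absorb the comparison $\tfrac12\eta(x)<\eta(y)$ coming from continuity of $\eta$ near $x$.

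Next I would take the open cover $\{B(x,r_x/2)\}_{x\in X}$ of the metric space $X$; metric spaces are paracompact, so there is a locally finite partition of unity $\{\lambda_s\}_{s\in S}$ subordinate to a locally finite refinement, with each $\supp\lambda_s$ contained in some $B(x_s,r_{x_s}/2)$. Define
\[
f(x):=\sum_{s\in S}\lambda_s(x)\,v_{x_s}.
\]
This is a locally finite sum of continuous functions, hence continuous. It remains to verify the two required properties at an arbitrary $x\in X$. Let $s_1,\dots,s_k$ be the (finitely many) indices with $\lambda_{s_i}(x)>0$; then $x\in B(x_{s_i},r_{x_{s_i}}/2)$ for each $i$. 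Relabel so that $r_{x_{s_1}}\le\dots\le r_{x_{s_k}}$; then each $x_{s_i}\in B(x_{s_k},r_{x_{s_k}})$ — wait, more carefully, $d(x_{s_i},x_{s_k})\le d(x_{s_i},x)+d(x,x_{s_k})<r_{x_{s_i}}/2+r_{x_{s_k}}/2\le r_{x_{s_k}}$, so $x_{s_i}\in B(x_{s_k},r_{x_{s_k}})$, and therefore by our choice of radii $v_{x_{s_i}}\in\Phi(x_{s_i})\subset\Phi\big(B(x_{s_k},r_{x_{s_k}})\big)$. Since also $x\in B(x_{s_k},r_{x_{s_k}}/2)\subset B(x_{s_k},\eta(x_{s_k}))$, and $x$ is within $\eta(x)$-scale of $x_{s_k}$ (again adjusting the initial choice of $r_x\le\eta(x)$ and using $r_{x_{s_k}}/2<\eta(x)$, which I arrange by the continuity of $\eta$ when choosing $r_x$), we get $\Phi\big(B(x_{s_k},r_{x_{s_k}})\big)\subset\Phi\big(B(x,\eta(x))\big)+\eta(x)B$ after one more application of $H$-usc; hence each $v_{x_{s_i}}$ lies in $\Phi\big(B(x,\eta(x))\big)+\eta(x)B$. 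Convexity of the values of $\Phi$ makes that set convex, so the convex combination $f(x)$ lies there too.

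For the inequality, note $\xi(x_{s_k},v_{x_{s_i}})$ — no; rather, I use that for each $i$, since $x\in B(x_{s_i},r_{x_{s_i}})$ (indeed $d(x,x_{s_i})<r_{x_{s_i}}/2<r_{x_{s_i}}$) and $r_{x_{s_i}}$ was chosen so that $\xi(y,v_{x_{s_i}})<\eps$ for all $y\in B(x_{s_i},r_{x_{s_i}})$, we get $\xi(x,v_{x_{s_i}})<\eps$ for every $i$. Then convexity of $\xi(x,\cdot)$ gives
\[
\xi\big(x,f(x)\big)=\xi\Big(x,\sum_i\lambda_{s_i}(x)v_{x_{s_i}}\Big)\le\sum_i\lambda_{s_i}(x)\,\xi(x,v_{x_{s_i}})<\eps,
\]
which is the second claim. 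The main obstacle — and the only genuinely delicate point — is bookkeeping the three competing length scales ($\eta$ at the center $x_s$, $\eta$ at the evaluation point $x$, and the inclusion tolerance in the definition of $H$-usc) so that a single choice of $r_x\le\eta(x)$, refined using the continuity of $\eta$, simultaneously yields $\xi(y,v_x)<\eps$ and the nested inclusion $\Phi\big(B(x,r_x)\big)\subset\Phi\big(B(x',\eta(x'))\big)+\eta(x')B$ whenever $x$ is deep inside a ball centered at $x'$; everything else is the routine paracompactness/partition-of-unity machinery.
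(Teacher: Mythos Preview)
Your overall strategy---choose a selection point $v_x\in\Phi(x)$ at each $x$, localize, then glue by a partition of unity---is exactly the paper's. The verification of $\xi(x,f(x))<\eps$ is correct. But your proof of the inclusion $f(x)\in\Phi\big(B(x,\eta(x))\big)+\eta(x)B$ contains a genuine error: the set $\Phi\big(B(x,\eta(x))\big)+\eta(x)B$ is \emph{not} convex in general. Convexity of the values of $\Phi$ means each $\Phi(y)$ is convex, not that the union $\bigcup_{y\in B(x,\eta(x))}\Phi(y)$ is. So from ``each $v_{x_{s_i}}$ lies in $\Phi\big(B(x,\eta(x))\big)+\eta(x)B$'' you cannot conclude that the convex combination $f(x)$ does.

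The fix is to place all the $v_{x_{s_i}}$ into a \emph{single} convex set $\Phi(z)+\eta(x)B$ for one fixed $z\in B(x,\eta(x))$, and only afterwards pass to $\Phi\big(B(x,\eta(x))\big)+\eta(x)B$. Your own setup almost does this: you showed $x_{s_i}\in B(x_{s_k},r_{x_{s_k}})$, so the $H$-usc condition you imposed at the center $x_{s_k}$ gives $v_{x_{s_i}}\in\Phi(x_{s_i})\subset\Phi(x_{s_k})+\tfrac12\eta(x_{s_i})B$. Using your auxiliary inequality $\tfrac12\eta(x_{s_i})<\eta(x)$ (from $x\in B(x_{s_i},r_{x_{s_i}})$ and your choice of $r_{x_{s_i}}$), you get $v_{x_{s_i}}\in\Phi(x_{s_k})+\eta(x)B$ for every $i$. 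This set \emph{is} convex, so $f(x)\in\Phi(x_{s_k})+\eta(x)B$; and since $d(x,x_{s_k})<r_{x_{s_k}}/2\le\tfrac12\eta(x_{s_k})<\eta(x)$, you have $x_{s_k}\in B(x,\eta(x))$, whence $f(x)\in\Phi\big(B(x,\eta(x))\big)+\eta(x)B$. So your ``largest radius'' trick works, but the argument as you wrote it does not.

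For comparison, the paper avoids this ad hoc step by taking an open \emph{star refinement} $\mathcal W$ of the cover $\{V(y)\}$ before building the partition of unity: then for any $x$, all the active centers $x_s$ together with $x$ itself lie in a single $V(y)$, which immediately gives $v_s\in\Phi(x_s)\subset\Phi(y)+\tfrac{\eta(y)}{2}B$ for one fixed $y$ with $y\in B(x,\eta(x))$. The star refinement packages the scale bookkeeping you flagged as ``the only genuinely delicate point'' in one clean stroke.
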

\begin{proof}  For any $y\in X$ let
$$V(y):=B\left(y,\frac{\eta(y)}{2}\right)\cap \left\{z\in X\mid \Phi(z)\subset \Phi(y)+\frac{\eta(y)}{2}B,\;\eta(y)<2\eta(z)\right\}.$$
Clearly $\cal V:=\{V(y)\}_{y\in X}$ is an open cover of $X$. Let $\mathcal W$ of $X$ be an open cover of $X$ {\em star refining}  $\cal V$. For  $x\in X$ let $v_x\in \Phi(x)$ with $\xi(x,v_x)\leq 0$. For $W\in {\mathcal W}$ and $x\in W$, $T_W(x):=\big\{y\in W\mid \xi(y,v_x)<\eps\big\}$.
Clearly $x\in T_W(x)$ and $T_W(x)$ is open since $\xi(\cdot,v_x)$ is usc. Hence ${\mathcal T}:=\{T_W(x)\}_{W\in {\mathcal W},\,x\in W}$ is an open cover of $X$. Let $\{\lambda_s\}_{s\in S}$ be a  partition of unity subordinated to $\mathcal T$, i.e., for any $s\in S$, there is $W_s\in {\mathcal W}$, $x_s\in W_s$ with $\supp\lambda_s\subset T_s:=T_{W_s}(x_s)$. Let
$$f(x):=\sum_{s\in S}\lambda_s(x)v_s,\;\; u\in U,$$
where $v_s:=v_{x_s}$. Then $f\colon X\to E$ is well-defined and continuous. For $x\in X$ let $S(x):=\{s\in S\mid\lambda_s(x)\neq 0\}$. If $s\in S(x)$, then $x\in T_s$, i.e., $\xi(x,v_s)<\eps$. Since $\xi(x,\cdot)$ is convex, $\xi(x,f(x))<\eps$. For any $s\in S(x)$,
$x_s, x\in T_s\subset W_s$. Since $\mathcal W$ star refines $\mathcal V$,  for  $s\in S(x)$ points $x, x_s$ belong to the star of $x$ with respect to $\cal W$, i.e.,
$$x,x_s\in\bigcup_{\{W\in {\mathcal W}\mid x\in W\}}W\subset V(y)$$
for some $y\in X$. Thus $\|x-y\|<\frac{\eta(y)}{2}<\eta(x)$, i.e., $y\in B(x,\eta(x))$ and $v_s\in \Phi(x_s)\subset \Phi(y)+\frac{\eta(y)}{2}B$ for $s\in S(x)$. This together with the convexity of $\Phi(y)$ shows that
$$f(x)\in\Phi(y)+\eta(x)B\subset \Phi\Big(B\big(x,\eta(x)\big)\Big)+\eta(x)B.\eqno\qedhere$$
\end{proof}
\begin{Lem}\label{app2} If $\Phi\colon X\times [0,1]\multi E$ is $H$-usc, then for any $\eps>0$ there is a continuous function $\delta\colon X\to (0,1)$ such that
$$\Phi\Big(B\big(x,\delta(x)\big)\times \big[0,\delta(x)\big)\Big)\subset \Phi\big(B(x,\eps)\times\{0\}\big)+\eps B,\; x\in X.$$
\end{Lem}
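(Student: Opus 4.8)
The plan is to extract from the $H$-upper semicontinuity of $\Phi$, at each base point $(x,0)$, a radius on which $\Phi$ is controlled up to $\eps B$, and then to amalgamate these radii into a single \emph{continuous} $\delta$ via a partition of unity. It is convenient to give $X\times[0,1]$ the max metric, so that its open balls about $(x,0)$ are precisely the products $B(x,\rho)\times[0,\rho)$ (the conclusion is unaffected by passing to any of the usual uniformly equivalent product metrics). One may also assume $\eps\in(0,1)$, since shrinking $\eps$ only strengthens the conclusion while keeping $\delta$ valued in $(0,1)$. Now, for each $x\in X$, apply $H$-usc of $\Phi$ at $(x,0)$ with accuracy $\eps$ to obtain $\beta(x)>0$, and fix $r(x)\in\big(0,\min\{\beta(x),\eps\}\big)$; then $\Phi\big(B(x,r(x))\times[0,r(x))\big)\subset\Phi(x,0)+\eps B$.

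Since the metric space $X$ is paracompact, I would next pick a locally finite partition of unity $\{\lambda_s\}_{s\in S}$ together with points $x_s\in X$ so that $\supp\lambda_s\subset B\big(x_s,r(x_s)/2\big)$ for every $s$, and put
$$\delta(x):=\tfrac12\sum_{s\in S}\lambda_s(x)\,r(x_s).$$
Local finiteness makes this a locally finite sum of continuous functions, hence continuous; and because some $\lambda_s(x)>0$ with $0<r(x_s)<\eps<1$ while $\sum_s\lambda_s\equiv1$, we get $0<\delta(x)\leq\tfrac12\max\{r(x_s):\lambda_s(x)\neq0\}<1$, so $\delta\colon X\to(0,1)$ as required. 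To check the inclusion, fix $x\in X$ and $(y,t)$ with $d(x,y)<\delta(x)$, $0\leq t<\delta(x)$. Among the finitely many indices $s$ with $\lambda_s(x)\neq0$ choose $s^*$ maximizing $r(x_s)$; since $\tfrac12\sum_s\lambda_s(x)r(x_s)$ is $\tfrac12$ times a convex combination of those $r(x_s)$, we have $\delta(x)\leq\tfrac12 r(x_{s^*})$, and since $x\in\supp\lambda_{s^*}$ also $d(x,x_{s^*})<\tfrac12 r(x_{s^*})$. Hence $d(y,x_{s^*})\leq d(y,x)+d(x,x_{s^*})<\delta(x)+\tfrac12 r(x_{s^*})\leq r(x_{s^*})$ and $t<\delta(x)<r(x_{s^*})$, so $(y,t)\in B\big(x_{s^*},r(x_{s^*})\big)\times[0,r(x_{s^*}))$ and therefore $\Phi(y,t)\subset\Phi(x_{s^*},0)+\eps B$. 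Finally $d(x,x_{s^*})<\tfrac12 r(x_{s^*})<\eps$ gives $\Phi(x_{s^*},0)\subset\Phi\big(B(x,\eps)\times\{0\}\big)$, and combining the last two inclusions finishes the proof.

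The step I expect to be the crux is precisely this bookkeeping: one has to arrange the radii of the refinement (here $r(x_s)/2$) and the weights of the partition of unity so that the averaged value $\delta(x)$, the distance $d(x,x_{s^*})$, and the ``time'' coordinate $t$ are all simultaneously dominated by $r(x_{s^*})$, and moreover so that $x_{s^*}$ stays within $\eps$ of $x$; the inequalities $\delta(x)\leq\tfrac12\max_s r(x_s)$ and $d(x,x_{s^*})<\tfrac12 r(x_{s^*})$ are what make this go through. Everything else --- continuity and strict positivity of $\delta$, and the reduction to the max metric --- is routine.
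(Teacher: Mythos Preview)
Your proof is correct and follows essentially the same approach as the paper's: pick local radii from $H$-upper semicontinuity at the points $(x,0)$, subordinate a partition of unity to balls of a fraction of those radii, and define $\delta$ as the weighted average, then verify the inclusion by dominating with the largest active radius. The only differences are cosmetic bookkeeping (the paper takes $\delta_x$ with $\Phi(B(x,2\delta_x)\times[0,\delta_x))\subset\Phi(x,0)+\eps B$, subordinates to $\{B(x,\delta_x)\}$, and sets $\delta(x)=\sum_s\lambda_s(x)\delta_s$, whereas you halve in two different places); the logic and the key inequality are identical.
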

\begin{proof} For $x\in X$, there is $0<\delta_x<\min\{1,\eps\}$ such that $\Phi\big(B(x,2\delta_x)\times [0,\delta_x)\big)\subset \Phi(x,0)+\eps B$. Let $\{\lambda_s\}_{s\in S}$ be a  partition of unity subordinated to the open cover $\left\{B(x,\delta_x)\right\}_{x\in X}$. Hence for any $s\in S$, there is $x_s\in X$ such that $\supp\l_s\subset B(x_s,\delta_s)$, where $\delta_s:=\delta_{x_s}$. Let
$\delta(x):=\sum_{s\in S}\l_s(x)\delta_s$ for $x\in X$. Then $\delta\colon X\to (0,1)$ is well-defined and continuous. If $x\in X$, then there is $t\in S$ with $\l_t(x)\neq 0$, i.e., $x\in B(x_t,\delta_n)$, and $\delta(x)\leq \delta_t$. Then
$\Phi\Big(B\big(x,\delta(x)\big)\times \big[0,\delta(x)\big)\Big)\subset \Phi\big(B(x_t,2\delta_t)\times [0,\delta_t)\big)\subset \Phi(x_t,0)+\eps B\subset \Phi(B(x,\eps)\times\{0\})+\eps B$.
\end{proof}

\noindent {\sc Accretive operators} (see \cite{Barbu}):\label{accr} A (possibly set-valued) operator $A\colon D(A)\multi E$, where $D(A)\subset E$ is called {\em accretive}, if for all $x,y\in D(A)$,
$u\in Ax$, $v\in Ay$ and $\l>0$, $\|x-y\|\leq\|x-y+\l(u-v)\|$.  $A$ is {\em $m$-accretive} if it is accretive and $\mathrm{Range}(I+\lambda A)= E$ for some (equivalently for all) $\lambda>0$.  $A$ is {\em $\omega$-accretive} (resp. $\omega$-$m$-accretive), $\omega\in\R$, if $\omega I+A$ is accretive (resp. $m$-accretive). The following facts are of importance.\\
\indent (a) $A\colon D(A)\multi E$ is accretive iff for any $(u,v), (u',v')\in\Gr(A)$, there is $p\in J(u-u')$, where $J\colon  E\multi E^*$ is the (normalized) duality map (see \cite[Eq. (1.1)]{Barbu}), such that
$\la v-v',p\ra\geq 0$.\\
\indent (b) By the Lumer-Phillips Theorem \cite[Theorem 3.15]{Engel} a densely defined {\em linear} $A\colon D(A)\to X$ is $\omega$-$m$-accretive if and only if $-A$ is the generator of a $C_0$ semigroup  $\{e^{-tA}\}_{t\geq 0}$ such that $\|e^{-tA}\|\leq e^{t\omega}$ for $t\geq 0$.\\
\indent (c) If $A$ is $\omega$-$m$-accretive, $\lambda>0$ and $\lambda\omega<1$, then
$J_\lambda^A:=(I+\lambda A)^{-1}: E\to D(A)$ is well-defined single-valued, $\|J^A_\lambda u-J^A_\lambda w\|\leq (1-\lambda\omega)^{-1}\|u-w\|$. The map $E\times
(0,\lambda_0)\ni(u,\l)\mapsto J_{\l}^A u$, where $\lambda_0:=\infty$ if $\omega\leq 0$ and $\lambda_0=\omega^{-1}$ otherwise, is continuous. If $u\in\ov{D(A)}$, then
$\lim_{\l\to 0^+}J^A_\l u=x$.\\
\indent (d) If $A$ is $\omega$-$m$-accretive, then the following resolvent identities are satisfied
\begin{gather}\label{resid1}J^A_{\l_1}=J^A_{\l_2}\big(\l_2\l_1^{-1}I+(1+\l_2\l_1^{-1})J^A_{\l_1}\big),\; \l_i>0,\;\l_i\omega<1,\; i=1, 2;\\
\label{resid2}J^A_\l u=J^{\omega I+A}_{\l(1-\l\omega)^{-1}}\big((1-\l\omega)^{-1}u\big),\; \l>0,\; \l\omega<1.\end{gather}

\noindent {\sc Resolvent compact accretive operators: }\label{res-com}
An $\omega$-$m$-accretive operator $A$ is said to be {\em resolvent compact} if $J_\l$ is compact for any $\l>0$ with $\l\omega<1$ (\footnote{A continuous map is {\em compact} if it maps bounded sets into compact ones.}). If $A$ is resolvent compact, $0<\l_1\leq\l_2$ and $\l_i\omega$ for $i=1,2$, then a map $ E\times [\l_1,\l_2]\ni (u,\l)\mapsto J_\l u$ is compact.\\
\indent A family $\cal A=\{A(t)\}_{t\in [0,1]}$, where $A(t)$ is an $\omega(t)$-$m$-accretive for $t\in [0,1]$, is {\em resolvent continuous} (resp. {\em resolvent compact}) if $\omega\colon [0,1]\to\R$ is continuous and for $\lambda>0$, $\lambda\sup_{t\in [0,1]}\omega(t)<1$, the map
$$ E\times [0,1]\ni (u,t)\mapsto J^{A(t)}_\lambda u$$
is continuous (resp. compact). If $\cal A$ is resolvent compact, then $A(t)$ is resovent compact for all $t$. If $A$ is $m$-accretive, then the  family
$A(t):=\omega(t)I+\mu(t)A$, where $\omega,\mu\colon [0,1]\to \R$ are continuous, $\inf\mu>0$, is resolvent continuous; it is resolvent compact if so is $A$ (see \cite[Example 2.6]{b:C-K2}). The following {\em closedness} result is of  importance.
\begin{Prop}\label{clos} Let  $\cal A=\{A(t)\}_{t\in [0,1]}$ be a resolvent continuous family of $\omega$-$m$-accretive operators.\\
\indent {\em (i)} For sequences $(u_n)$ in $ E$, $(t_n)$ in $[0,1]$ and $(v_n)$ in $ E$ such that $u_n\in D(A(t_n))$,
$v_n\in A(t_n)x_n$ for all $n\geq 1$, if $u_n\to u_0\in M$, $t_n\to t_0\in [0,1]$ and $v_n\to v_0\in E$ (resp. $ E^*$ is uniformly convex and $v_n\rightharpoonup v_0$), then $(u_0,v_0)\in\Gr(A)$, i.e.,  $u_0\in D(A(t_0))$ and $v_0\in A(t_0) u_0$. \\
\indent {\em (ii)} If $\cal A$ is resolvent compact, then for any bounded sequences $(u_n)$ in $M$, $(v_n)$ in $E$ and $(t_n)$ in $[0,1]$ such that $u_n\in D(A(t_n))$ and $v_n\in A(t_n) u_n$,  the set $\{u_n\}$ is relatively compact.
\end{Prop}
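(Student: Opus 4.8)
The plan is to work throughout with the resolvents. Fix once and for all a number $\lambda>0$ with $\lambda\omega<1$ (here $\omega$ denotes the constant — or $\sup_{t\in[0,1]}\omega(t)$ — appearing in the definition of the family). Since $v_n\in A(t_n)u_n$ is equivalent to $u_n+\lambda v_n\in(I+\lambda A(t_n))u_n$, one has $u_n=J^{A(t_n)}_\lambda(u_n+\lambda v_n)$ for every $n$, and likewise any $(w,z)\in\Gr(A(t))$ satisfies $w=J^{A(t)}_\lambda(w+\lambda z)$. This rewriting is the only device needed for part (ii) and for the strong case of (i).

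For (ii): the sequence $\big((u_n+\lambda v_n,t_n)\big)$ is bounded in $E\times[0,1]$, hence its image under the compact map $(u,t)\mapsto J^{A(t)}_\lambda u$ — which is precisely $\{u_n\}$ — is relatively compact. For the strong case of (i): $u_n+\lambda v_n\to u_0+\lambda v_0$ and $t_n\to t_0$, so resolvent continuity gives $u_n=J^{A(t_n)}_\lambda(u_n+\lambda v_n)\to J^{A(t_0)}_\lambda(u_0+\lambda v_0)$; comparing with $u_n\to u_0$ yields $u_0=J^{A(t_0)}_\lambda(u_0+\lambda v_0)$, i.e. $v_0\in A(t_0)u_0$.

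The core is the weak case of (i), where one can no longer take limits in $u_n+\lambda v_n$; I would split it in two steps. \emph{Step 1: $A(t_0)$ is maximal $\omega$-accretive}, i.e. if $(u,v)\in E\times E$ is such that for every $(w,z)\in\Gr(A(t_0))$ there is $p\in J(u-w)$ with $\langle v-z,p\rangle\geq-\omega\|u-w\|^2$, then $v\in A(t_0)u$. Indeed, test this against $(w,z)=(\tilde u,\tilde v)$ with $\tilde u:=J^{A(t_0)}_\lambda(u+\lambda v)$ and $\tilde v:=\lambda^{-1}(u+\lambda v-\tilde u)\in A(t_0)\tilde u$: using $v-\tilde v=-\lambda^{-1}(u-\tilde u)$ and $\langle u-\tilde u,p\rangle=\|u-\tilde u\|^2$, the inequality becomes $(\lambda^{-1}-\omega)\|u-\tilde u\|^2\leq 0$, which forces $u=\tilde u$ since $\lambda^{-1}>\omega$, whence $v=\tilde v\in A(t_0)u$. \emph{Step 2: verifying the hypothesis of Step 1 for $(u_0,v_0)$.} Fix $(w,z)\in\Gr(A(t_0))$ and transport it to the operators $A(t_n)$ by $w_n:=J^{A(t_n)}_\lambda(w+\lambda z)$, $z_n:=\lambda^{-1}(w+\lambda z-w_n)\in A(t_n)w_n$; resolvent continuity gives $w_n\to w$ and hence $z_n\to z$ strongly. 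The $\omega$-accretivity of $A(t_n)$ applied to $(u_n,v_n)$ and $(w_n,z_n)$ yields $\langle v_n-z_n,J(u_n-w_n)\rangle\geq-\omega\|u_n-w_n\|^2$, where $J$ is single-valued because $E^*$ is uniformly convex (equivalently $E$ is uniformly smooth), so $J$ is norm-to-norm continuous on bounded sets. Since $u_n-w_n\to u_0-w$, one gets $J(u_n-w_n)\to J(u_0-w)$ strongly, while $v_n-z_n\rightharpoonup v_0-z$ with $(v_n-z_n)$ bounded; hence the pairing passes to the limit and, together with $\|u_n-w_n\|\to\|u_0-w\|$, gives $\langle v_0-z,J(u_0-w)\rangle\geq-\omega\|u_0-w\|^2$. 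As $(w,z)\in\Gr(A(t_0))$ was arbitrary, Step 1 gives $v_0\in A(t_0)u_0$.

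The main obstacle is precisely this weak case: without uniform convexity of $E^*$ the duality selections $p_n\in J(u_n-w_n)$ are merely bounded and the pairing $\langle v_n-z_n,p_n\rangle$, a product of two only-weakly-convergent sequences, need not converge; uniform convexity is exactly what makes $J$ single-valued and norm-continuous, turning this into the pairing of a weakly convergent with a strongly convergent sequence and closing the argument. The remaining ingredients are routine: the extension of the duality-map characterization of accretivity to $\omega$-accretive operators, the availability of one $\lambda$ with $\lambda\omega<1$ serving the whole family, and the automatic norm-boundedness of all the sequences involved.
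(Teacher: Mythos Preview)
Your proof is correct and rests on the same core idea as the paper's: rewrite graph membership via the resolvent, transport test elements along $t_n\to t_0$ by resolvent continuity, and pass to the limit in the accretivity inequality using that $E^*$ uniformly convex makes $J$ single-valued and norm-continuous. The organization differs slightly. The paper first reduces to the $m$-accretive case by replacing $A(t)$ with $B(t):=\omega(t)I+A(t)$, and then, rather than testing against all $(w,z)\in\Gr(A(t_0))$ and invoking maximality, it tests only against the single pair $y_n:=J^{A(t_n)}_1(u_0+v_0)$, $z_n:=u_0+v_0-y_n$; in the limit the identity $v_0-z_0=y_0-u_0$ collapses the accretivity inequality to $0\leq -\|u_0-y_0\|^2$, forcing $u_0=y_0$ directly. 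This is exactly your Step~1 test element, so your two steps are the paper's single step unfolded: your route is more modular (the maximality lemma is reusable), the paper's is shorter. For (ii) and the strong case of (i) your argument coincides with the paper's (which simply cites \cite[Prop.~2.7]{b:C-K2}).
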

\begin{proof} The "strong" version of (i) and (ii)  follows  from \cite[Prop. 2.7]{b:C-K2}. We have only to show (i) when $v_n\rightharpoonup v_0$. Assume first that $\omega(t)=0$ for all $t$, i.e., $\cal A$ consists of $m$-accretive operators. For $n\geq 0$, let $y_n:=J^{A(t_n)}_1(u_0+v_0)$. Then $u_0+v_0=y_n+z_n$, where $z_n\in A(t_n)y_n$, for any $n\geq 0$. The resolvent continuity implies that $y_n\to y_0$ and  hence $z_n\to z_0$. Since  $ E^*$ is uniformly convex, the duality mapping $J$ is single valued and continuous. In view of (a) on page \pageref{accr}, $\la v_n-z_n,J(u_n-y_n)\ra\geq 0$ for all $n\geq 1$. Passing with $n\to\infty$ one has
$$0\leq \la v_0-z_0,J(u_0-y_0)\ra=\la y_0-u_0,J(u_0-y_0)\ra=-\|y_0-u_0\|^2.$$
Hence $u_0=y_0$ and $v_0=z_0$, i.e., $(u_0,v_0)\in\Gr(A(t_0))$.\\
\indent Suppose  $\omega(\cdot)$ is continuous. Then for $t\in [0,1]$,
$B(t):=\omega(t) I+A(t)$, $t\in [0,1]$ is $m$-accretive and the family $\{B(t)\}$ is resolvent continuous (see \eqref{resid2}). Clearly $w_n:=\omega(t_n)u_n+v_n\in B(t_n)u_n$ and $w_n\rightharpoonup w_0=\omega(t_0)u_0+v_0$. By the above argument $(u_0,w_0)\in\Gr(B)$; hence $(u_0,v_0)\in\Gr(A)$.\end{proof}
\begin{Cor}\label{corclos} If $A$ is $\omega$-$m$-accretive and $ E^*$ is uniformly convex, then $\Gr(A)$ is closed as the subset of $ E\times E_w^*$, i.e., given a sequence $(u_n,v_n)\in\Gr(A)$ if $u_n\to u_0$ and $v_n\rightharpoonup v_0$, then $(u_0,v_0)\in\Gr(A)$. If $A$ is resolvent compact, $(u_n,v_n)\in\Gr(A)$ and sequences $(u_n)$, $(v_n)$ are bounded, then $(u_n)$ has a convergent subsequence.\hfill $\square$
\end{Cor}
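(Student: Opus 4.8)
The plan is to read the Corollary off from Proposition \ref{clos} by specializing to the constant family. Put $A(t):=A$ for all $t\in[0,1]$; then $\mathcal A=\{A(t)\}$ is a resolvent continuous family of $\omega$-$m$-accretive operators, because $\omega(\cdot)\equiv\omega$ is continuous and, for admissible $\l$, the assignment $E\times[0,1]\ni(u,t)\mapsto J^{A(t)}_\l u=(I+\l A)^{-1}u$ is independent of $t$, hence continuous by (c) on page \pageref{accr}; moreover $\mathcal A$ is resolvent compact as soon as $A$ is. Thus both halves of the statement are particular cases of the two parts of Proposition \ref{clos}.

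For the first assertion, let $(u_n,v_n)\in\Gr(A)$ with $u_n\to u_0$ and $v_n\rightharpoonup v_0$. Taking $t_n:=0$ for every $n$ and invoking the ``weak'' case of Proposition \ref{clos}(i) (legitimate since $E^*$ is uniformly convex), we obtain $u_0\in D(A)$ and $v_0\in Au_0$, i.e.\ $(u_0,v_0)\in\Gr(A)$; this is precisely the asserted sequential closedness of $\Gr(A)$ in $E\times E^*_w$. For the second assertion, if $(u_n,v_n)\in\Gr(A)$ with $(u_n)$, $(v_n)$ bounded, then Proposition \ref{clos}(ii), applied to the constant resolvent compact family with $t_n\equiv0$, gives that $\{u_n\}$ is relatively compact, so $(u_n)$ has a convergent subsequence.

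The only point that deserves attention — and it is the heart of the ``weak'' case of Proposition \ref{clos}, not of the Corollary itself — is that uniform convexity of $E^*$ makes the normalized duality map $J\colon E\to E^*$ single-valued and norm-to-norm continuous on bounded sets; this is what allows one to pass to the limit in the accretivity inequality $\la v_n-z_n,J(u_n-y_n)\ra\ge0$ when $v_n$ converges only weakly. Were one to argue without routing through Proposition \ref{clos}, one would reproduce that step: first reduce to $\omega=0$ by replacing $A$ with the $m$-accretive operator $\omega I+A$ (note $(u_n,\omega u_n+v_n)\in\Gr(\omega I+A)$ and $\omega u_n+v_n\rightharpoonup\omega u_0+v_0$, since $\omega u_n\to\omega u_0$ in norm), then set $y_0:=(I+A)^{-1}(u_0+v_0)$ and write $u_0+v_0=y_0+z_0$ with $z_0\in Ay_0$ (here, the family being constant, $y_0$ does not vary with $n$), and finally combine $v_n\rightharpoonup v_0$ with $J(u_n-y_0)\to J(u_0-y_0)$ in norm to reach $\la v_0-z_0,J(u_0-y_0)\ra\ge0$; since $v_0-z_0=y_0-u_0$, this reads $-\|u_0-y_0\|^2\ge0$, forcing $u_0=y_0$ and $v_0=z_0$, i.e.\ $(u_0,v_0)\in\Gr(A)$. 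No genuine obstacle arises beyond this already-settled point.
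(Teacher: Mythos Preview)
Your proposal is correct and is exactly the approach the paper intends: the Corollary is stated with a terminal $\square$ and no separate proof because it is the specialization of Proposition~\ref{clos} to the constant family $A(t)\equiv A$, precisely as you describe. Your additional paragraph reproducing the weak-closure argument is accurate and matches the proof of Proposition~\ref{clos}(i) line by line (with the simplification, which you note, that $y_n=y_0$ is constant in the single-operator case).
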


\noindent {\sc Tangent cones and $\cal L$-retracts:}\label{tan-con} (a) If $K\subset E$ is closed, then the {\em Clarke tangent cone} is defined
by
\begin{equation}\label{Clarke cone}
T_K(x):=\left\{u\in E\mid \limsup_{y\stackrel{K}{\rightarrow}x, h\rarr 0+}\frac{d_K(y+hu)}{h}=0\right\}.\end{equation}
Observe that
$$\limsup_{y\stackrel{K}{\rightarrow}x,
h\rarr 0+}\frac{d_K(y+hu)}{h}=d_K^\circ(x,u)$$ is the Clarke derivative of $d_K(\cdot)$ at $x$ in the direction of $u$ (see, e.g., \cite{Clarke} or \cite[Proposition 7.3.10]{A-E} for details). Hence the map $K\times E\ni (x,u)\mapsto d_K^\circ(x,u)$ is upper semicontinuous (as a real function) and for $x\in K$, $d_K^\circ(x,\cdot)$ is  convex. If $K$ is closed convex, then (see \cite[Section 4.2]{A-F})
$$T_K(x)=\ov{\bigcup_{h>0}h^{-1}(K-x)}=\left\{u\in E\mid \lim_{h\to 0^+}\frac{d_K(x+hu)}{h}=0\right\}$$
is the tangent cone in the sense of convex analysis. Some other facts and details are to be found in \cite[Section 4.1]{A-F} or \cite[Section 4.1]{A-E}. Let us only mention that if $K=D(x_0,R)$ and $\|x-x_0\|=R$, then $u\in T_K(x)$ iff $[x-x_0,u]_+\leq 0$, where the semi-inner product (see, e.g. \cite[Section 1.6]{Vrabie}),
$$[x,y]_+:=\lim_{h\to 0^+}\frac{\|x+hy\|-\|x\|}{h},\;\; x,y\in E.$$
Observe that if $E$ is a Hilbert space, then $\|x\|[x,y]_+=\la x,y\ra$ is the inner product.

\begin{Cor}\label{app3} Let $K\subset E$ be closed and $U\subset K$. If\, $\Phi\colon U\times [0,1]\multi  E$ is $H$-usc with convex values and $\Phi(u,t)\cap T_K(u)\neq\emptyset$, $u\in U$, $t\in [0,1]$, then for any $\eps>0$ there is a continuous $f\colon U\times [0,1]\to E$ such that for $u\in U$, $t\in [0,1]$,
\begin{align}\label{app4}d_K^\circ(u,f(u,t))<\eps\;\; \text{and}\;\; f(u,t)\in \Phi(B(u,\eps)\times (t-\eps,t+\eps))+\eps B,\\
\label{+app}f(u,i)\in \Phi_i(B(u,\eps)+\eps B,\; \text{where}\;\; \Phi_i:=\Phi(\cdot,i),\; i=0,1.\end{align}
\end{Cor}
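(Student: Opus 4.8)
The plan is to derive Corollary~\ref{app3} from Lemma~\ref{app1} and Lemma~\ref{app2} by a two-step reduction. The first step handles the homotopy parameter: since $\Phi$ is $H$-usc on $U\times[0,1]$, apply Lemma~\ref{app2} to obtain, for a given $\eps>0$, a continuous function $\delta_0\colon U\to(0,1)$ such that $\Phi\big(B(u,\delta_0(u))\times[0,\delta_0(u))\big)\subset\Phi\big(B(u,\eps)\times\{0\}\big)+\eps B$ for all $u\in U$ (and symmetrically near $t=1$, e.g.\ by applying the same lemma to the reflected map $(u,t)\mapsto\Phi(u,1-t)$ and taking the minimum). This will be what is needed to verify the endpoint condition~\eqref{+app} once $f$ is constructed.

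The second, main step applies Lemma~\ref{app1} with the metric space $X:=U\times[0,1]$, the $H$-usc convex-valued map $\Phi$ itself, and the function
$$\xi\big((u,t),v\big):=d_K^\circ(u,v),\qquad (u,t)\in U\times[0,1],\ v\in E.$$
By the remarks in the paragraph on tangent cones, $(x,u)\mapsto d_K^\circ(x,u)$ is upper semicontinuous as a real function and $d_K^\circ(x,\cdot)$ is convex, so $\xi(\cdot,v)$ is usc and $\xi((u,t),\cdot)$ is convex, as required. The tangency hypothesis $\Phi(u,t)\cap T_K(u)\neq\emptyset$ means precisely that there is $v_{(u,t)}\in\Phi(u,t)$ with $d_K^\circ(u,v_{(u,t)})=0\leq 0$, so the hypothesis of Lemma~\ref{app1} is met. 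Taking the continuous function $\eta\colon U\times[0,1]\to(0,\infty)$ to be $\eta(u,t):=\min\{\eps,\delta_0(u),1-\text{(distance controlling the top endpoint)}\}$ — more simply, $\eta:=\min\{\eps,\delta_0,\delta_1\}$ where $\delta_1$ is the analogous function near $t=1$ — Lemma~\ref{app1} produces a continuous $f\colon U\times[0,1]\to E$ with $f(u,t)\in\Phi\big(B((u,t),\eta(u,t))\big)+\eta(u,t)B$ and $\xi((u,t),f(u,t))<\eps$, i.e.\ $d_K^\circ(u,f(u,t))<\eps$. Since $\eta\leq\eps$, the inclusion gives $f(u,t)\in\Phi\big(B(u,\eps)\times(t-\eps,t+\eps)\big)+\eps B$, which is exactly the second half of~\eqref{app4}.

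It remains to check~\eqref{+app}. For $i=0$: since $\eta(u,0)\leq\delta_0(u)$, the ball $B((u,0),\eta(u,0))$ inside $U\times[0,1]$ is contained in $B(u,\delta_0(u))\times[0,\delta_0(u))$, hence
$$f(u,0)\in\Phi\big(B(u,\delta_0(u))\times[0,\delta_0(u))\big)+\eta(u,0)B\subset\Phi\big(B(u,\eps)\times\{0\}\big)+\eps B=\Phi_0\big(B(u,\eps)\big)+\eps B,$$
using Lemma~\ref{app2} and $\eta\leq\eps$. The case $i=1$ is symmetric via $\delta_1$. This completes the argument.

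The only subtlety — and the step I would be most careful about — is bookkeeping the topology near the endpoints $t=0,1$: the open balls in $U\times[0,1]$ are half-balls there, so one must make sure the radius function $\eta$ is chosen small enough that $B\big((u,i),\eta(u,i)\big)$ lands inside the set on which Lemma~\ref{app2} gives control, and that a single $\eta$ works simultaneously near both endpoints and globally. Everything else is a direct transcription of Lemmata~\ref{app1} and~\ref{app2} with the specific choice $\xi=d_K^\circ$, whose required usc/convexity properties are already recorded in the preliminaries.
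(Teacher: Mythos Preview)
Your proposal is correct and follows essentially the same route as the paper: apply Lemma~\ref{app2} (twice, once near each endpoint) to get the continuous radius function $\delta$, set $\xi\big((u,t),v\big)=d_K^\circ(u,v)$ and $\eta=\min\{\eps,\delta\}$, then invoke Lemma~\ref{app1} on $X=U\times[0,1]$ and read off \eqref{app4} and \eqref{+app}. The only cosmetic point is that at the endpoints the extra $+\eta(u,i)B$ from Lemma~\ref{app1} has to be absorbed together with the $+\eps B$ coming from Lemma~\ref{app2}; this is handled (here and in the paper) by the usual $\eps/2$ trick, which your bookkeeping remark already anticipates.
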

\begin{proof}  By Lemma \ref{app2} we find  a continuous $\delta\colon U\to (0,1)$ such that
\begin{align*}\Phi\Big(B\big(u,\delta(u)\big)\times \big[0,\delta(u)\big)\Big)&\subset \Phi\big(B(u,\eps)\times\{0\}\big)+\eps B;\\
\Phi\Big(B\big(u,\delta(u)\big)\times \big(1-\delta(u),1]\big)\Big)&\subset \Phi\big(B(u,\eps)\times\{1\}\big)+\eps B.\end{align*}
Letting $\xi\big((u,t),v\big):=d_K^\circ(u,v)$ and $\eta(u,t):=\min\big\{\delta(u),\eps\big\}$ for $u\in U$, $t\in [0,1]$. In view of Lemma \ref{app1}, we find a continuous $f\colon U\times I\to E$ such that
$d_K^\circ \big(u,f(u,t)\big)<\eps$ and
$$f(u,t)\in \Phi\Big(B\big(u,\eta(u,t)\big)\times \big(t-\eta(u,t),t+\eta(u,t)\big)\Big)+\eta(u,t) B\subset \Phi\big(B(u,\eps)\times (t-\eps,t+\eps)\big)+\eps B.$$
In particular
\begin{align*}f(u,0)&\in \Phi\Big(B\big(u,\eta(u,0)\big)\times \big[0,\eta(u,0)\big)\Big)\subset \Phi\Big(B\big(u,\delta(u)\big)\times \big[0,\delta(u)\big)\Big)\subset \Phi_0\big(B(u,\eps)\big)+\eps B,\\
f(u,1)&\in \Phi\Big(B\big(u,\eta(u,1)\big)\times \big(1-\eta(u,0),1\big]\Big)\subset \Phi\Big(B\big(u,\delta(u)\big)\times \big(1-\delta(u),1\big]\Big)\subset \Phi_1\big(B(u,\eps\big)+\eps B.\hfill \qedhere\end{align*}
\end{proof}

\indent (b) Let $\eps>0$. In the setting of Corollary \ref{app3} we say that $f\colon U\times [0,1]\to E$ is an {\em $\eps$-tangent (graph-) approximation} of $\Phi\colon U\times [0,1]\multi E$ \big(writing $f\in a(\Phi,\eps)$\big) if condition \eqref{app4} is satisfied. Similarly we say that  $f\colon U\to E$ is an {\em $\eps$-tangent approximation} of $F\colon U\multi E$ \big(writing $f\in a(F,\eps)$\big), if for all $u\in U$,
\begin{align}\label{app5}d_K^\circ(u,f(u))<\eps\;\; \text{and}\;\; f(u)\in F\big(B(u,\eps)\big)+\eps B.
 \end{align}

\indent (c) The closed set  $K\subset E$ is an $\mathcal L$-retract if there is $\eta>0$, a constant $L\geq 1$ and  a retraction $r\colon B(K,\eta)\to K$ such that $\|x-r(x)\|\leq L\dist(x,K)$ for all $x\in B(K,\eta):=\{x\in E\mid \dist(x,K)<\eta\}$. The class of $\cal L$-retracts is  broad; among others  it contains closed convex sets, the so-called epi-Lipschitz sets, $C^2$ manifolds and others (see \cite{}). It is clear that an $\cal L$-retract $K$ belongs to the class of metric ANRs ({\em absolute neighborhood retracts}).

\noindent {\sc Fixed point index on ANRs} (see \cite{D-G}
or \cite{Granas}): \label{index}  Let $M$ be a metric ANR. A compact map $\phi\colon  U\to M$, where $U\subset M$ is open, is {\em admissible} if the fixed point set $\Fix(\phi):=\{ x\in
U\mid\phi(x)=x\}$ is compact. By an {\em admissible homotopy} we
mean a compact map $\phi\colon U\times [0,1]\to M$ such that the set $\big\{x\in U\mid \phi(x,t)=x\; \text{for some}\; t\in [0,1]\big\}$ is compact. To any admissible map $\phi\colon U\to M$, there corresponds
an integer $\ind_M(\phi,U)$ such that:
\begin{enumerate}
\item{\em (Existence)} If $\ind_M (\phi,U)\neq 0$, then
$\Fix(\phi)\neq \emptyset$.
\item {\em (Additivity)} Let  $\phi\colon U\to M$ be an
admissible map and $U_1, U_2\subset U$ be open and disjoint sets
such that $\{x\in U\,|\, \phi(x)=x \} \subset U_1\cup U_2$. Then
$\ind_M (\phi,U) = \ind_M  (\phi_{\,|{U}_1}, U_1) +
\ind_M (\phi_{\,| {U}_2}, U_2).$
\item {\em (Homotopy invariance)} If $\psi\colon U\times [0,1]\to M$ is an admissible homotopy, then $\ind_M\big(\psi (\cdot,0),U\big)=\ind_M(\psi(\cdot,1),U)$.
\item {\em (Contraction)} If $N\subset M$ is closed and $N\in ANR$, $\phi(U)\subset N$, then $\ind_M(\phi,M)=\ind_N(\psi,U\cap N)$, where $\psi\colon U\cap N\to N$ is the contraction of $\phi$.
\item{\em (Local normalization)} If $\Theta_{x_0}\colon U\to M$, where $x_0\in M\setminus \partial U$ is given by $\Theta_{x_0} (x):=x_0$ for $x\in U$, then
$$
\ind_M (\Theta_{x_0}, U) = \left\{
\begin{array}{ll}
1 & \mbox{ if } x_0\in U\\
0 & \mbox{ if } x_0\in M\setminus \cl U.
\end{array}
\right.
$$
\item {\em (Global normalization)} If $\phi \colon  M \to M$ is
compact at large (i.e. $\phi(M)$ is relatively compact), then $\phi$ is a Lefschetz map (see \cite[Theorem (7.1)]{Granas}), its {\em generalized} Lefschetz number $\Lambda(\phi)$ is defined $\ind_M(\phi,M)=\Lambda(\phi)$ (\footnote{Interesting and important computations of the index on closed convex sets are provided in \cite{Dancer, Dancer1}.}).
\end{enumerate}

\section{The constrained coincidence degree}

We now define an invariant detecting solutions to  \eqref{problem1}, where $K\subset E$ is an $\cal L$-retract. This assumption is general; in applications, however, convex closed constraining sets are usually encountered (recall that convex closed sets are $\cal L$-retracts).
\begin{Ass}\label{assad} {\em A pair $(A,F)$ is {\em admissible} in the sense that:
\vspace{-4mm}
\begin{enumerate}
\item[(1)] an $\omega$-$m$-accretive operator $A\colon D(A)\multi E$ is a resolvent compact and $K$ is resolvent-invariant, i.e., $J_\l(K)\subset K$ for  $0<\l\leq\bar\l$, where $\bar\l\omega<1$;
\item[(2)] a map $F\colon U\multi E$, where $U\subset K$ is open (in $K$), is  {\em weakly tangent}, i.e.,  $F(u)\cap T_K(u)\neq\emptyset$ for $u\in K$;
\item[(3)] the set $C=\Coin(A,F;U):=\{u\in U\mid u\in D(A), \; Au\cap F(u)\neq\emptyset\}$ is compact.
\end{enumerate}
}\end{Ass}
\begin{Ass}\label{assadhom} {\em A pair $(\{A(t)\}_{t\in [0,1]},\Phi)$ is an admissible homotopy if:
\vspace{-4mm}
\begin{enumerate}
\item[(1)] the family of $\omega(t)$-$m$-accretive operators $\{A(t)\}_{t\in [0,1]}$  is resolvent compact; $J^{A(t)}_\l(K)\subset K$ for all $0<\l\leq\bar\l$, where $\bar\l\sup_{t\in [0,1]}\omega(t)<1$;
\item[(2)] a  map $\Phi\colon U\times [0,1]\multi$ is  {\em weakly tangent}, i.e., $\Phi(u,t)\cap T_K(u)\neq\emptyset$ for $u\in U$, $t\in [0,1]$;
\item[(3)] the set $\widetilde C:=\bigcup_{t\in [0,1]}\Coin\big(A(t),\Phi(\cdot,t)\big)$ is compact.
\end{enumerate}}
\end{Ass}
\begin{Rem}\label{uwzal} {\em Let us briefly comment the assumptions.\\
\indent (i) The structural assumption that $K$ is an $\cal L$-retract is necessary in a sense. It has been thoroughly discussed in \cite{b:BEM-K}, where examples of neighborhood retracts and tangent maps without equilibria were shown. It is moreover clear that without  tangency condition (which, in fact, prescribes directions of values taken by $F$) no equilibria of $F$ exist.\\
\indent (ii) The resolvent compactness of $A$ is the only compactness condition considered in this paper. As it seems  if $E$ is a Hilbert space, $K$ is convex, $\omega<0$ and $F$ is a set-valued map being locally Lipschitz with respect to the Hausdorff distance, then the construction of an ivariant detecting $A$-equilibria is also possible.\\
\indent (iii) Recall that $A$-equilibria, i.e., solutions to \eqref{problem1},  correspond to stationary states of the evolution process governed by $A$ and perturbed by $F$. It is therefore reasonable to assume that the `unperturbed' evolution determined  by $A$ is viable in $K$, i.e., if $u\in K$, then $S_A(t)u\in K$ for $t\geq 0$, where $\{S_A(t)\}_{t\geq 0}$ is the (nonlinear) semigroup generated by $A$. The viablity condition (i.e., the semigroup invariance of $K$) is slightly weaker then resolvent invariance (due to the so-called Crandall-Liggett formula). The invariance of this type is a subject of intensive research (see, e.g., \cite{KS}).\\
\indent (iv) Let us also address the following question. By supposition $A$ is $\omega$-$m$-accretive, i.e., $B:=A+\omega I$ is $m$-accretive. It is easy to see that \eqref{problem1} is equivalent to $0\in -Bu+G(u)$, $u\in K$, where $G=F+\omega I$.
It is clear that $B$ is resolvent compact. Moreover if $K$ is convex, $\omega<0$ and $0\in K$, then $K$ is invariant with respect to resolvents of $B$ (see \eqref{resid2}) and $G$ is weakly tangent. Therefore under these circumstances one may discuss $m$-accretive operators without loss of generality. This is not, however, the case in a general situation: the assumption of quasi-$m$-accretivity can not be replaced by $m$-accretivity.
}\end{Rem}

\vspace{-4mm}

We shall provide two constructions: in the first one we assume that the admissible perturbation $F$ is $H$-usc with weakly compact convex values; in the second one weakly usc admissible perturbations will be considered. As mentioned above the ideas behind our construction rely on an approach started by \cite{Chen} (see also \cite{Guan}) in the unconstrained and single-valued situation and \cite{b:C-K2}, where the perturbation was single-valued, too. It is to be mentioned that a different, `dynamical', approach has been presented in \cite{Cwisz1} (see also \cite{Cwisz}) for single-valued locally Lipschitz perturbations.
\subsection{$H$-upper semicontinuous perturbations} In this subsection we assume {\em additionally} that
\vspace{-4mm}
\begin{enumerate}
\item[$(D_1)$] the dual space $E^*$ is uniformly convex;
\item[$(D_2)$] $(A,F)$ is admissible and $F\colon U\multi  E$ is $H$-usc with convex weakly compact values;
\item[$(D_3)$] $(\{A(t)\}_{t\in [0,1]},\Phi)$ is admissible and $\Phi\colon U\times [0,1]\multi E$ is $H$-usc with convex weakly compact values.
\end{enumerate}

\begin{Rem}\label{discussion}{\em (1) Condition \ref{assad} (3) \big(resp. \ref{assadhom} (3)\big) holds true iff $C$ and $F(C)$ (resp. $\widetilde C$ and  $\Phi(\widetilde C\times [0,1])$) are bounded. Indeed, if $C$ is compact, then  $F(C)$ is weakly compact and, hence, bounded. If $C$ and $F(C)$ are bounded, then $C$ is relatively compact since $C\subset \{J_\l(u+\l v)\mid u\in C, v\in F(u)\}$ and $J_\l$ is compact. To see  that $C$ is closed take a sequence $(u_n)$ in $C$, $u_n\to u_0$ and $v_n\in Au_n\cap F(u_n)$, $n\geq1$. Condition $(D_2)$ implies that (a subsequence) $v_{n_k}
\rightharpoonup v_0\in F(u_0)$. In view of Corollary \ref{corclos}, $v_0\in D(A)$ and $v_0\in Au_0$, i.e. $u_0\in C$.\\
\indent (2) Observe that if $F$ usc with compact values, the provisional assumption concerning the uniform convexity of $E$ is not necessary because in this case the graph $\Gr(F)$ is closed.\\
\indent (3) In view of Corollary \ref{app3} if  $F$ (resp. $\Phi$) is admissible and  $(D_2)$ (resp. $(D_3)$) holds true, then $a(F,\eps)\neq\emptyset$  (resp. $a(\Phi,\eps)\neq\emptyset$). Moreover there are $\eps$-tangent approximations of $\Phi$ satisfying condition \eqref{+app}. }
\end{Rem}

\vspace{-4mm}

\noindent {\sc The construction:}  Let us first briefly describe the idea behind. Instead of \eqref{problem1} or \eqref{problem3} we study a problem
\begin{equation}\label{problem3}u\in J_\lambda\circ r(u+\l f(u)),\; u\in U,\end{equation}
where $\l>0$ is small enough, $r$ is an $\cal L$-retraction and $f$ is a sufficiently close tangent graph-approximation of $F$. Since the map in the RHS of \eqref{problem3} is compact, the fixed point index can be used.\\
\indent We shall proceed in several steps.

\noindent {\em Step 1}: Since $C$ is compact and $F$ is locally bounded, there are an open (in K) bounded set $W\subset K$ such that
\begin{equation}\label{cons1}C\subset W\subset \ov W\subset U,\end{equation}
and $\bar\eps>0$ such that $F(\ov W)$ and $f(\ov W)$ are bounded if $f\in a(F,\eps)$, $0<\eps\leq \bar\eps$ (here $\ov W$ is the closure of $W$ in $K$).

\noindent {\em Step 2}: Take $W$ as  in Step 1. For any $M>0$, there is $0<\eps_0=\eps_0(M)\leq\bar\eps$ depending on $M$ (and $W$)
 such that if $0<\eps\leq\eps_0$ and $f^1,f^2\in a(F,\eps)$, then for  $u\in \part W\cap D(A)$, $t\in [0,1]$ and $v\in Au$
\begin{equation}\label{cons2}\|v-f(u,t)\|\geq M\eps_0\end{equation}
where $f(\cdot,t):=(1-t)f^1+t f^2$, $t\in [0,1]$.\\
\indent Suppose to the contrary that there are sequences  $\eps_n\to 0$, $0<\eps_n\leq \bar\eps$, $(u_n)$ in $\part W\cap D(A)$ and $v_n\in Au_n$ such that $\|v_n-f_n(u_n,t_n)\|<M\eps_n$, where $f_n(\cdot,t):=(1-t)f^1_n+t f_n^2$ and $f_n^1,f_n^2\in a(F,\eps_n)$, $n\geq 1$. The sequence $(v_n)$ is bounded. In view of Corollary \ref{corclos} $(u_n)$ has a convergent subsequence;  assume wlog that $u_n\to u_0\in\part W$ and that $t_n\to t_0$. Since $f^i_n\in a(F,\eps)$, we get $f^i_n(u_n)\in F(B(u_n,\eps_n))+\eps_nB$, there is $\bar u^i_n\in U$, $\|\bar u^i_n-u_n\|<\eps_n$ and $\bar v^i_n\in F(\bar u^i_n)$ such that $\|f^i_n(u_n)-\bar v^i_n\|<\eps_n$, $i=1,2$. Clearly $\bar u^i_n\to u_0$ and (after passing to subsequence) $\bar v^i_n\rightharpoonup v^i_0\in F(u_0)$ since $F$ is weakly usc with convex weakly compact values (see page \pageref{set-val}). Thus $\bar v_n:=(1-t_n)\bar v^1_n+t \bar v^2_n\rightharpoonup v_0:=(1-t_0)v_0^1+t_0v_0^2\in F(u_0)$
and $v_n\rightharpoonup v_0$, too. Therefore $v_0\in Au_n$ by Corollary \ref{corclos}, i.e., $v_0\in\Coin(A,F;U)$, a contradiction.

\noindent {\em Step 3}: Take a retraction $r\colon B(K,\eta)\to K$ with $\|x-r(x)\|\leq Ld_K(x)$ for some $L\geq 1$.  Let $f^1,f^2\in a(F,\eps)$, where $0<\eps\leq\eps_0(L)$. We claim that there is $0<\lambda_0\leq\bar\l$ \big(see assumption \ref{assad} (1)\big) depending on $W$
such that if $0<\l\leq \l_0$, then for any $u\in\ov W$, $t\in [0,1]$, $u+\l f(u,t)\subset B(K,\eta)$, where $f(\cdot,t)=(1-t)f^1+t f^2$, and the set
$$C_\l:=\{u\in\ov W\mid u=J_\l\circ r\big(u+\l f(u,t)\big)\}$$ is contained in $W$.\\
\indent Wlog we may suppose $\bar\l$ is small enough that for $0<\l\leq\bar\l$, $\lambda \|f(u,t)\|<\eta$ for $u\in\ov W$. Hence if $0<\l\leq\bar\l$, then $u+\l f(u,t)\in B(K,\eta)$ for $u\in\ov W$ and $t\in [0,1]$. Thus $C_\l$ is well-defined. Suppose to the contrary that there are sequences $\l_n\to 0$, $0<\l_n\leq\bar\l_n$, $(u_n)$ in $\part W\cap D(A)$  and $(t_n)$ in $[0,1]$ such that
\begin{equation}\label{cons3}u_n=J_{\l_n}\circ r\big(u_n+\l_nf(u_n,t_n)\big),\;n\geq 1.\end{equation}
Hence there is $v_n\in Au_n$ with $r\big(u_n+\l_nf(u_n,t_n)\big)=u_n+\l_nv_n$, $n\geq 1$. By the definition $\cal L$-retract we see that
\begin{equation}\label{cons4}\l_n\|v_n-f(u_n,t_n)\|=\|r\big(u_n+\l_nf(u_n,t_n)\big)-\big(u_n+
\l_nf(u_n,t_n)\big)\|\leq Ld_K\big(u_n+\l_nf(u_n,t_n)\big),\; n\geq 1.\end{equation}
Observe that $d_K\big(u_n+\l_n f(u_n,t_n)\big)\leq\l_n\|f(u_n,t_n)\|$ since $u_n\in K$. In view of \eqref{cons4} $\|v_n\|\leq (L+1)\|f(u_n,t_n)\|$, i.e., the sequence $(v_n)$ is bounded. This, together with the boundedness of $(u_n)$, implies that, after passing to a subsequence, $u_n\to u_0\in\part W$. We also let wlog $t_n\to t_0$. In view of \eqref{cons2} and \eqref{cons4} for all $n\geq 1$
\begin{equation}\label{cons5}L\eps_0\leq\big\|v_n-f(u_n,t_n)\big\|\leq L\frac{d_K\big(u_n+\l_nf(u_0,t_0)\big)}{\l_n}+L\|f(u_n,t_n)-f(u_0,t_0)\|.
\end{equation}
Passing with $n\to\infty$ in \eqref{cons5}, property \eqref{app5} of $f^1$ and $f^2$ yields
\begin{gather*}L\eps_0\leq\limsup_{n\to\infty}\big\|v_n-f(u_n,t_n)\big\|\leq Ld_K^\circ\big(u_0,f(u_0,t_0)\big)\leq L\Big((1-t_0)d_K^\circ\big(u_0,f^1(u_0)\big)+t_0d_K^\circ\big(u_0,f^2(u_0)\big)\Big)<L\eps, \end{gather*}
i.e., $\eps_0<\eps$: a contradiction.

\noindent {\em Step 4}: Take $0<\eps<\eps_0=\eps_0(L+1)$ and $0<\l\leq\l_0$. Let $g^\eps_\l(u)\colon \ov W\to K$ be given by
\begin{equation}\label{cons6}g^\eps_\l(u):=J_\l\circ r\big(u+\l f(u)\big),\; u\in \ov W,\end{equation}
where $f\in a(F,\eps)$. Then $g^\eps_\l$ is well-defined. Moreover $g^\eps_\l$ is compact since so is $J_\l$. In view of Step 3, the set $\Fix g^\eps_\l:=\big\{u\in\ov W\mid u=g^\eps_\l(u)\big\}$ of fixed points of $g^\eps_\l|_{\ov W}$ is contained in $W$. Therefore we are in a position to consider the fixed point index $\ind_K\big(g^\eps_\l,W\big)$. We claim that if $0<\eps_1\leq\eps_2\leq\eps_0$, $0<\l_1\leq\l_2\leq\l_0$, $f_1\in a(F,\eps_1)$, $f_2\in a(F,\eps_2)$ and $g_i(u):=J_{\l_i}\circ r(u+\l_if_i(u))$ for $u\in\ov W$, $i=1,2$, then maps $g_i\colon \ov W\to K$ are compact and
\begin{equation}\label{cons7}\ind_K(g_1,W)=\ind_K(g_2,W).\end{equation}
To see this consider a homotopy $g\colon \ov W\times [0,1]\to K$ given by
$$g(u,t)=J_{\l(t)}\circ r\big(u+\l(t)f(u,t)\big),\;u\in\ov W, t\in [0,1],$$
where $\l(t):=(1-t)\l_1+t\l_2$, $f(\cdot,t)=(1-t)f_1+tf_2$, $t\in [0,1]$. It is clear that $g$ is compact (see page \pageref{res-com}) and
$\big\{u\in\ov W\mid u=g(u,t)\;\text{for some}\; t\in [0,1]\big\}\subset W$. Therefore \eqref{cons7} follows from the homotopy invariance of the index.

\noindent {\em Step 5}: Let open (in $K$) and bounded sets $W_1$, $W_2$ be such that $W_1\subset W_2$, $C\subset W_i\subset \ov W_i\subset U$ and  $f(\ov W_i)$ be bounded if $f\in a(F,\eps)$, where $\eps>0$ is sufficiently small. Take a sufficiently small $\l>0$, too. Then arguing as in Step 4 we show that $g^\eps_\l(u)\neq u$ for $u\in \ov W_2\setminus W_1$. The localization property of the fixed point index implies that $\ind_K\big(g^\eps_\l,W_1\big)=\ind_K\big(g^\eps_\l,W_2\big)$.

\noindent {\em Step 6}: Finally: if $W$ is taken as in Step 1,  $\eps>0$ and $\l>0$ are sufficiently small, then the index $\ind_K(g^\eps_\l,W)$ does not depend on the choice of a neighborhood retraction $r$.\\
\indent To this end suppose that $r_i\colon B(K,\eta_i)\to K$ are retractions with $\|r_i(u)-u\|\leq L_i d_K(u)$ for $u\in B(K,\eta_i)$, $i=0,1$. Let $\eta:=\min\{\eta_0,\eta_1\}$ and $L:=\max\{L_0,L_1\}$. Consider
$$g_i(u):=J_\l\circ r_i\big(u+\l f(u)\big),\; u\in\ov W,\; i=0,1,$$
where $f\in a(F,\eps)$, $0<\eps\leq\eps_0\big(L(L+2)\big)$ and $0<\l\leq\l_0$. We  show that $\ind_K(g_0,W)=\ind_K(g_1,W)$ provided $\l$ is small enough. To this aim consider a map
$$g(u,t)=J_\l\circ r_0\Big(\big(1-t\big)\big(u+\l f(u)\big)+tr_1\big(u+\l f(u)\big)\Big),\; u\in\ov W,\; t\in [0,1].$$
It is easy to see that $d_K\Big(\big(1-t\big)\big(u+\l f(u)\big)+tr_1\big(u+\l f(u)\big)\Big)\leq \l(1+t)\|f(u)\|$. Hence $g$ is well-defined when $\l$ is small.
Clearly $g(\cdot,i)=g_i$ for $i=0,1$. As in Step 5, $g$ is compact. To conclude the proof by the homotopy invariance we check that if $\l$ is sufficiently small, then $u\neq g(u,t)$ for $u\in\part W$, $t\in [0,1]$. Suppose to the contrary that
there are sequences $\l_n\searrow 0$, $(u_n)$ in $\part W$ and $(t_n)\in [0,1]$ such that
$u_n=g(u_n,t_n)$ for $n\geq 1$, i.e.,
\begin{gather*}u_n+\l_nv_n=r_0\Big(\big(1-t_n\big)\big(u_n+\l_n f(u_n)\big)+t_nr_1\big(u_n+\l_n f(u_n)\big)\Big)\\=
r_0\bigg(u_n+\l_n f(u_n)+t\Big(r_1\big(u_n+\l_nf(u_n)\big)-\big(u_n+\l_nf(u_n)\big)\Big)\bigg),\end{gather*}
where $v_n\in Au_n$, $n\geq 1$. By a straightforward computation $\l_n\|v_n-f(u_n)\|\leq L(L+2)d_K\big(u_n+\l_nf(u_n)\big)$ and hence
$$L(L+2)\eps_0\leq\limsup_{n\to\infty}\big\|v_n-f(u_n)\big\|\leq L(L+2)d_K^\circ\big(u_0,f(u_0)\big)<L(L+2)\eps.$$
A contradiction shows the assertion.

\noindent {\em Conclusion}: We define the {\em constrained degree of coincidence} of the pair $(A,F)$ by the following
formula
\begin{equation}\label{def-deg}\deg_K(A,F;U):=\lim_{\l\searrow 0}\ind_K(g^\eps_\l,W),\end{equation}
where $\eps>0$ is sufficiently small, $W$ is a neighborhood of $\Coin(A,F;U)$ (in $K$)  and $g^\eps_\l$ is given by \eqref{cons6} with  $f\in a(F,\eps)$. Arguments from Steps 1 -- 7 justify this construction and show that the sequence in the right hand of the definition stabilizes and its limit does not depend on any auxiliary objects used to define it.

\begin{Th}\label{main1} Let a pair $(A,F)$ be admissible. The degree defined by \eqref{def-deg} has the following properties:
\vspace{-4mm}
\begin{enumerate}
\item \emph{(Existence)} If\, $\deg_K(A,F;U)\neq 0$ then $\Coin(A,F;U)\neq\emptyset$.
\item \emph{(Additivity)} If\, $U_1$, $U_2\subset U$ are open disjoint and $\Coin(A,F;U)\subset (U_1\cup U_2)\setminus \ov{U_1\cap U_2}$, then
    $$\deg_K(A,F; U)=\deg_K(A,F;U_1) +\deg_K(A,F;U_2).$$
\item \emph{(Homotopy invariance)} If\, $(\{A(t)\}_{t\in [0,1]},\Phi)$ is an admissible homotopy, then
    $$\deg_K\big(A(0),F(\cdot,0);U\big)= \deg_K\big(A(1),F(\cdot,1);U\big).$$
\item \emph{(Normalization)} If\, $K$ is bounded, $F\colon K\multi  E$ and $F(K)$ is bounded in $ E$, then the Euler characteristic $\chi(K_A)$, where $K_A:=K\cap \ov{D(A)}$, is well-defined and $\deg_K(A,F;K)=\chi(K_A).$

\end{enumerate}
\end{Th}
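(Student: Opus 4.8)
The plan is to verify the four properties by transporting the corresponding properties of the fixed point index $\ind_K$ (listed on page \pageref{index}) through the approximation construction, exploiting the fact that the degree is, by definition, the stabilized value of $\ind_K(g^\eps_\l, W)$ for small $\eps,\l$ and any admissible choice of $W$, $f\in a(F,\eps)$, and retraction $r$.

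\emph{Existence.} Suppose $\Coin(A,F;U)=\emptyset$. Then in Step 1 one may take $W$ itself to be empty (any open set disjoint from a neighborhood of $C$ works, and $C=\emptyset$), so $g^\eps_\l$ is the empty map and $\ind_K(g^\eps_\l,\emptyset)=0$ by the localization/additivity properties of the index; hence $\deg_K(A,F;U)=0$. Contrapositively, a nonzero degree forces $C\neq\emptyset$. First I would make this precise by noting that if $C=\emptyset$ then $\ov W\setminus W$ can be chosen to cover $\ov W$ trivially, i.e. $W=\emptyset$ is admissible in Step 5's localization, so the index computed on any valid neighborhood of $C$ equals the index on $\emptyset$, which vanishes.

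\emph{Additivity.} Given disjoint open $U_1,U_2$ with $C=\Coin(A,F;U)\subset (U_1\cup U_2)\setminus\ov{U_1\cap U_2}$, set $C_i:=\Coin(A,F;U_i)$; each $C_i$ is compact (it is closed in the compact set $C\cap\ov{U_i}$ by the closedness argument of Remark \ref{discussion}(1)), and $C\subset C_1\cup C_2$ with $C_i\subset U_i$. Choose $W_i$ a bounded neighborhood of $C_i$ in $K$ with $\ov{W_i}\subset U_i$, and $W_i$ pairwise disjoint; then $W:=W_1\cup W_2$ is a valid neighborhood of $C$ used to compute $\deg_K(A,F;U)$. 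For $\eps,\l$ small enough to work simultaneously for $W_1$, $W_2$, and $W$ (take the minimum of the three thresholds from Steps 2--4), the map $g^\eps_\l$ has no fixed points on $\ov W\setminus(W_1\cup W_2)$, so the additivity of $\ind_K$ gives $\ind_K(g^\eps_\l,W)=\ind_K(g^\eps_\l,W_1)+\ind_K(g^\eps_\l,W_2)$; passing to the limit $\l\searrow 0$ yields the claim.

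\emph{Homotopy invariance.} This is the main obstacle and the place where Corollary \ref{app3}, specifically the existence of $\eps$-tangent approximations of $\Phi$ satisfying \eqref{+app}, is essential. Given an admissible homotopy $(\{A(t)\},\Phi)$, I would first redo Steps 1--5 with $\Phi$ in place of $F$: using compactness of $\widetilde C$ and local boundedness of $\Phi$ (Remark \ref{discussion}(1),(3)) to choose a bounded open $W$ in $K$ with $\widetilde C\subset W\subset\ov W\subset U$; then producing, for small $\eps$, a tangent graph-approximation $f\in a(\Phi,\eps)$ with $f(\cdot,i)\in a(\Phi(\cdot,i),\eps)$ by \eqref{+app}; then arguing exactly as in Steps 2--3 — now with the resolvent-continuous, resolvent-compact family $\{A(t)\}$ and Proposition \ref{clos} in place of Corollary \ref{corclos} — that for $\eps,\l$ small the map
\[
G(u,t):=J^{A(t)}_\l\circ r\bigl(u+\l f(u,t)\bigr),\quad u\in\ov W,\ t\in[0,1],
\]
is well-defined, compact (resolvent compactness of the family, page \pageref{res-com}), and fixed-point free on $\ov W\setminus W$ uniformly in $t$. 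The contradiction argument is the homotopy analogue of Step 3: if $u_n=G(u_n,t_n)$ with $u_n\in\partial W$, extract $u_n\to u_0\in\partial W$, $t_n\to t_0$, $v_n\in A(t_n)u_n$ bounded, and use $d_K^\circ\bigl(u_0,f(u_0,t_0)\bigr)<\eps$ together with Proposition \ref{clos}(i) to conclude $u_0\in\Coin(A(t_0),\Phi(\cdot,t_0))\subset\widetilde C\subset W$, a contradiction. Then $G$ is an admissible homotopy of compact maps on $\ov W$, so $\ind_K(G(\cdot,0),W)=\ind_K(G(\cdot,1),W)$; since $G(\cdot,i)=J^{A(i)}_\l\circ r(\,\cdot\,+\l f(\cdot,i))$ with $f(\cdot,i)\in a(\Phi(\cdot,i),\eps)$, letting $\l\searrow 0$ identifies the two sides with $\deg_K(A(0),F(\cdot,0);U)$ and $\deg_K(A(1),F(\cdot,1);U)$ respectively. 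The delicate point here is that a single $W$, a single pair $(\eps,\l)$, and a single retraction $r$ must be made to work along the whole homotopy, which is exactly what the uniform (over $t$) versions of Steps 2--3 provide.

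\emph{Normalization.} When $K$ is bounded, $F\colon K\multi E$ is admissible with $F(K)$ bounded, one may take $W=K$. For small $\eps,\l$ the map $g^\eps_\l=J^A_\l\circ r(\,\cdot\,+\l f(\cdot))\colon K\to K$ is compact with relatively compact image, hence by the global normalization property $\ind_K(g^\eps_\l,K)=\Lambda(g^\eps_\l)$, the generalized Lefschetz number. As $\l\searrow 0$, $J^A_\l$ converges (pointwise, uniformly on the compact $r(\ov{K+\l f(K)})$) to the identity on $\ov{D(A)}$, so $g^\eps_\l$ is homotopic through compact self-maps to the retraction $K\to K_A$ composed with inclusion; standard Lefschetz-number invariance then gives $\Lambda(g^\eps_\l)=\Lambda(\id_{K_A})=\chi(K_A)$, the Euler characteristic of the ANR $K_A=K\cap\ov{D(A)}$, which is well-defined and finite because $K_A$ is a compact ANR (a closed bounded subset of the $\cal L$-retract $K$, invariant under the resolvents, hence itself an ANR, and compact by resolvent compactness). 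I would spell out the homotopy $J^A_{(1-s)\l}\circ r(\,\cdot\,+(1-s)\l f(\cdot))$, $s\in[0,1]$, checking it stays inside $B(K,\eta)$ and is compact, to justify the Lefschetz computation rigorously.
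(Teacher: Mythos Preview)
Your arguments for Existence and Additivity are fine (taking $W=\emptyset$ when $C=\emptyset$ is in fact a clean shortcut, relying only on the well-definedness established in Step~5), and your scheme for Homotopy invariance is the paper's. One imprecision there: the contradiction you sketch, ``conclude $u_0\in\Coin\bigl(A(t_0),\Phi(\cdot,t_0)\bigr)$'', does not follow when $\eps$ is fixed, since $f$ is only an $\eps$-graph-approximation and the limit of $v_n\in A(t_n)u_n$ need not land in $\Phi(u_0,t_0)$. The paper keeps the two steps separate: first a Step~2 analogue (where an auxiliary sequence of $\eps_n\to 0$ \emph{does} produce a point of $\widetilde C$) yields $\|v-f(u,t)\|\geq L\eps_0$ on $\partial W$; then the Step~3 analogue gives the numerical contradiction $L\eps_0\leq L\,d_K^\circ\bigl(u_0,f(u_0,t_0)\bigr)<L\eps$. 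Since you do say ``arguing exactly as in Steps 2--3'', this is presumably what you intend; just do not merge the two.

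The genuine gap is in Normalization. Two claims fail in general: $K_A=K\cap\ov{D(A)}$ need not be compact (resolvent compactness only gives that $J_\l(K)$ is relatively compact, and $K_A$ is typically larger; if $A$ is densely defined then $K_A=K$), and there is no retraction $K\to K_A$ nor any reason for $K_A$ to be an ANR. Moreover, your homotopy $s\mapsto J^A_{(1-s)\l}\circ r\bigl(\cdot+(1-s)\l f(\cdot)\bigr)$ does not extend continuously to $s=1$ on all of $K$, because $\lim_{\mu\to 0^+}J^A_\mu u=u$ holds only for $u\in\ov{D(A)}$. The paper avoids all of this by a factorization trick: since $g_\l(K)\subset K\cap D(A)\subset K_A$, one writes $g_\l=j\circ g'$ with $g'\colon K\to K_A$ the corestriction and $j\colon K_A\hookrightarrow K$ the inclusion, and sets $g_A:=g'\circ j=g_\l|_{K_A}$. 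Global normalization of $\ind_K$ gives $\ind_K(g_\l,K)=\Lambda(g_\l)$; Granas's commutativity lemma then shows $g_A$ is a Lefschetz map with $\Lambda(g_A)=\Lambda(g_\l)$. Now the homotopy $h(u,t)=g_{t\l}(u)$, $h(u,0)=u$, is run on $K_A$ only (where the limit $J^A_\mu u\to u$ is available), so $H_*(g_A)=\id_{H_*(K_A)}$. This forces $\id_{H_*(K_A)}$ to be a Leray endomorphism, i.e.\ $H_*(K_A)$ is of finite type, and \emph{that} is what makes $\chi(K_A)$ well-defined, with $\chi(K_A)=\Lambda(g_A)=\Lambda(g_\l)$, without any compactness or ANR hypothesis on $K_A$ itself.
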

\begin{proof} (1) Suppose to the contrary that $\Coin(A,F;U)=\emptyset$ and take open $W\subset K$ and $\bar\eps>0$ as in Step 1. Arguing as in Step 2 we get $0<\eps_0\leq\bar\eps$ such that $\big\|v-f(u)\big\|\geq L\eps$ for any $u\in\ov W\cap D(A)$ and $v\in Au$, where $f\in a(F,\eps)$ with $0<\eps<\eps_0$. If $0<\eps<\eps_0$ is small enough and $\l_n\searrow 0$, then $0\neq\deg_K(A,F;U)=\ind_K(g_n,W)$, where $g_n(u):=J_{\l_n}\circ r\big(u+\l_nf(u)\big)$, $u\in\ov W$, and $f\in a(F,\eps)$. Arguing as in Step 3 we find sequences $(u_n)$ in $W$, $v_n\in Au_n$ such that
(after passing to a subsequence) $u_n\to u_0$ and
$$L\eps_0\leq\limsup_{n\to\infty}\big\|v_n-f(u_n)\big\|\leq \limsup_{n\to\infty} \left(L\frac{d_K\big(u_n+\l_nf(u_0)\big)}{\l_n}+L\big\|f(u_n)-f(u_0)\big\|\right)<L\eps:$$
a contradiction.\\
\indent (2) The additivity property follows immediately from the additivity property of the index $\ind_K$.\\
\indent (3) We argue as follows. Choose an open $W\subset K$ and $\bar\eps>0$ such that $\Phi\big(\ov W\times [0,1]\big)$ and $f\big(\ov W\times [0,1]\big)$ are bounded, where $f\in a(\Phi,\eps)$ with $0<\eps\leq\bar\eps$ and $\bigcup_{t\in [0,1]}\Coin\big(A(t),\Phi(\cdot,t);U\big)\subset W\subset \ov W\subset U$. Using arguments similar to those from Step 2 we get $\eps_0\leq\bar\eps$ such that $\big\|v-f(u,t)\big\|\geq L\eps_0$ for any $t\in [0,1]$, $u\in\part W\cap D(A(t))$ and $v\in A(t)u$, where $f\in a(\Phi,\eps)$, $0<\eps<\eps_0$. Next we show that there is $\l>0$ such that for every $f\in a(F,\eps)$, $0<\eps<\eps_0$, the set $\Big\{u\in\ov W\mid u=J_\l\circ r\big(u+\l f(u,t)\big),\;\text{for some}\; t\in [0,1]\Big\}\subset W$: this can be done by exactly the same arguments as in Step 3. Take a small $\eps>0$ and  $f\in a(\Phi,\eps)$ satisfying condition \eqref{+app}. Hence
$f\colon \ov W\times [0,1]$ is an admissible (see page \pageref{index}) homotopy joining $f(\cdot,0)\in a\big(\Phi(\cdot,0),\eps\big)$ to $f(\cdot,1)\in a\big(\Phi(\cdot,1),\eps\big)$.
Therefore
$$\deg_K\big(A(0),\Phi(\cdot,0);U\big)=\ind_K\big(g(\cdot,0),W\big)=
\ind_K\big(g(\cdot,1),W\big)=\deg_K\big(A(1),\Phi(\cdot,1),U\big),$$
where $g(u,t):=J_\l\circ r\big(u+\l f(u,t)\big)$, $u\in \ov W$, $t\in [0,1]$, with sufficiently small $\l>0$.\\
\indent (4) By definition $\deg_K(A,F,K)=\ind_K(g,K)$, where $g\colon K\to K$ is compact, $g(u)=g_\l(u):=J_\l\circ r(u+\l f(u))$, $u\in K$, where $f\in a(F,\eps)$ and $\eps,\l>0$ are sufficiently small. Actually $g(K)\subset K_A$. Let $g'\colon K\to K_A$ be the contraction of $g$, $j\colon K_A\to K$ be the inclusion and $g_A:=g'|_{K_A}=g'\circ j_A$. Clearly $g_A$ is compact. Consider  $h\colon K_A\times [0,1]\to K$ given by $$
h(u,t):= \left\{
\begin{array}{ll}
g_{t\l}(u) & u\in K_A,\, t\in (0,1] \\
u  & u\in K_A, \, t=0.
\end{array}
\right.
$$
In view of (c) on page \pageref{accr}, we see that $h$ is a well-defined continuous homotopy joining $g_A$ to the identity $\id_{K_A}$.  In view of the normalization property of $\ind_K$, $g$ is a Lefschetz map and
$\Lambda(g)=\ind_K(g,K)$. The commutativity of the diagram
\vspace{-3mm}
$$\xymatrix@R-10pt{K\ar[r]^-{g'}\ar[d]^-{g}&{K_A}
\ar@<-2pt>[d]^-{g_A}
\ar[dl]_-j\\
K\ar[r]_-{g'}&{K_A.}}$$
along with \cite[Lemma (3.1)]{Granas} implies $g_A$ is a Lefschetz map, i.e. $H_*(g_A)$ is a Leray endomorphism (see \cite[Section 2]{Granas})  and $\Lambda(g_A)=\Lambda(g)$ (\footnote{$H_*(\cdot)$ stands for the singular homology functor with the rational coefficients.}). Since $g_A$ is homotopic to $\id_{K_A}$, we have that $H_*(g_A)=H_*(\id_{K_A})=\id_{H_*(K_A)}$. Therefore $\id_{H_*(K_A)}$ is a Leray endomorphisms , i.e., the graded vector space $H_*(K_A)$ is of finite type and the Euler characteristic $\chi(K_A):=\sum_{q\geq 0}(-1)^q\dim_\Q H_q(K_A)$ is a well-defined integer number. Moreover $\chi(K_A)=\l(\id_{K_A})=\Lambda(\id_{K_A})=\Lambda(g_A)=\Lambda(g)$, where $\lambda(\id_{K_A})$ is the ordinary Lefschetz number.\end{proof}
Let us now derive a series of result that can be treated as constrained generalizations of the Schaeffer or Leray-Schauder nonlinear alternatives.
\begin{Prop}\label{LS1} In addition to our standing assumptions, suppose that $U=K$ (i.e., $F\colon K\multi E$), $K$ is closed convex, $A$ is densely defined or $E$ is uniformly convex. Let $C:=\{u\in K\cap D(A)\mid Au\cap tF(u)\neq\emptyset\; \text{for some}\; t\in [0,1]\}$. If $F(C)$ is bounded and\\
\indent (a) $K$ is bounded; or\\
\indent (b) $C$ is bounded, $A$ is $\omega$-$m$-accretive with $\omega\leq 0$ and $0\in Au_0$ for some $u_0\in D(A)$,\\
then $\deg_K(A,F;K)=1$.
\end{Prop}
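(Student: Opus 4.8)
The plan is to deform $(A,F)$ onto the pair $(A,0)$ by a homotopy of the form $\Phi(u,t):=tF(u)$ and then to evaluate $\deg_K(A,0;K)$ directly, treating the two cases separately.

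\emph{Step 1 (reduction to the zero map).} I would put $A(t):=A$ for all $t$ and $\Phi\colon K\times[0,1]\multi E$, $\Phi(u,t):=tF(u)$, and check that $(\{A(t)\},\Phi)$ is an admissible homotopy in the sense of Assumption~\ref{assadhom} together with $(D_3)$: the constant family $\{A\}$ is resolvent compact and leaves $K$ invariant; the values $tF(u)$ are convex and weakly compact; $\Phi$ is $H$-usc because $F$ is $H$-usc and locally bounded, so the scalar factor $t\in[0,1]$ is harmless; and $\Phi$ is weakly tangent because $T_K(u)$ is a cone, so that $t\big(F(u)\cap T_K(u)\big)\subset T_K(u)$ for $t>0$, while $0\in T_K(u)$ covers $t=0$. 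The coincidence set $\widetilde C=\bigcup_{t\in[0,1]}\Coin(A,tF;K)$ is precisely the set $C$ of the statement; it is bounded (by (a) since $K$ is, by (b) by hypothesis), and since $F(C)$ is bounded and every $u\in C$ satisfies $u=J_\l(u+\l v)$ for some $v\in Au$ with $\|v\|$ controlled by the bound on $F(C)$, the set $C$ lies in the image of a bounded set under the compact map $J_\l$, hence is relatively compact; its closedness follows from Corollary~\ref{corclos} (using $(D_1)$) and the weak upper semicontinuity of $F$. Thus $C$ is compact and Theorem~\ref{main1}(3) yields $\deg_K(A,F;K)=\deg_K(A,0;K)$.

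\emph{Step 2 (case (a): $K$ bounded).} Now $F\equiv0$ has bounded range, so the Normalization property (Theorem~\ref{main1}(4)) applies: $\deg_K(A,0;K)=\chi(K_A)$ with $K_A=K\cap\ov{D(A)}$. Here $K_A\neq\emptyset$ (it contains $J_\l(K)$) and $K_A$ is contractible — it equals $K$, hence is convex, when $A$ is densely defined, and it is convex when $E$ is uniformly convex, since then $\ov{D(A)}$ is convex — so $\chi(K_A)=1$. This disposes of case (a).

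\emph{Step 3 (case (b)).} When $\omega\le0$ the resolvent $J_\l$ is nonexpansive and $0\in Au_0$ forces $J_\l u_0=u_0$, so for $R>0$ the truncation $K_R:=K\cap D(u_0,R)$ is a closed, convex, bounded, resolvent-invariant set. As $C_0:=\Coin(A,0;K)\subset C$ is bounded, choose $R$ so large that $C_0\subset B(u_0,R')$ with $R'<R$, and a bounded open (in $K$) neighbourhood $W$ of $C_0$ with $\ov W\subset B(u_0,R')\cap K\subset K_R$. In the definition of $\deg_K(A,0;K)$ one may take $f\equiv0$ (indeed $d_K^\circ(u,0)=0$), so the approximating maps reduce to $J_\l$; since $J_\l(\ov W)\subset D(u_0,R')\cap K\subset K_R$, the Contraction property of the fixed-point index identifies $\ind_K(J_\l,W)$ with $\ind_{K_R}(J_\l,W)$, and the latter equals $\deg_{K_R}(A,0;K_R)$ for small $\l$. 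Applying Step~2 to the bounded convex $K_R$ (note $u_0\in K_R\cap D(A)$, so $(K_R)_A\neq\emptyset$) gives $\deg_{K_R}(A,0;K_R)=\chi\big((K_R)_A\big)=1$; hence $\deg_K(A,0;K)=1$ and, with Step~1, $\deg_K(A,F;K)=1$. (An alternative, once $u_0$ is known to lie in $K$, is the straight-line homotopy $h(u,s):=J_\l\big((1-s)u+su_0\big)$ from $J_\l$ to the constant $u_0$: accretivity of $A$ and $0\in Au_0$, tested in the accretivity inequality with parameter $\l/s$, force $u=u_0$ at every fixed point with $s>0$, so $h$ is admissible and $\ind_K(J_\l,W)=\ind_K(\Theta_{u_0},W)=1$.)

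\emph{Expected main obstacle.} The two load-bearing points are the compactness of $\widetilde C=C$, which is what legitimizes the deformation $\Phi$, and, in case (b), the passage from $\deg_K$ to $\deg_{K_R}$: this is exactly where $\omega\le0$ and the $A$-zero $u_0$ are indispensable, for they are what makes the truncated set $K_R$ resolvent-invariant, and the Contraction property has to be invoked with care since the degree on $K_R$ uses a retraction onto $K_R$ rather than onto $K$. Verifying $\chi(K_A)=1$, i.e.\ the topological triviality of $K\cap\ov{D(A)}$, is precisely what calls for the hypothesis that $A$ be densely defined or $E$ be uniformly convex.
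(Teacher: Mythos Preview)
Your proof is correct and follows essentially the same route as the paper: the homotopy $\Phi(u,t)=tF(u)$ reduces the problem to $\deg_K(A,0;K)$, case~(a) is settled by normalization and convexity of $K_A$, and in case~(b) the paper likewise passes to the truncation $K'=K\cap D(u_0,R)$ via the contraction property of the fixed-point index (using $J_\l u_0=u_0$ and nonexpansiveness of $J_\l$ to get $J_\l(K')\subset K'$). Your verification of the admissibility of $\Phi$ is more thorough than the paper's, and the parenthetical straight-line alternative is your own addition.
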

\begin{proof} Under our assumption $\ov{D(A)}=E$ or, if $E$ is uniformly convex (recall that so is $E^*$ by assumption), then  $\ov{D(A)}$ is convex in view of \cite[Proposition 3.5]{Barbu}.  Let $\Phi\colon K\times [0,1]\multi E$ be given by $\Phi(u,t):=tu$, $u\in K$. It is easy to see that $\Phi$ is admissible. Hence by the homotopy invariance and normalization properties we get $\deg_K(A,F;K)=\deg_K(A,0;K)=\chi(K_A)=1$ because $K_A$ is convex.\\
\indent Suppose now that $K$ is not bounded but $C$ is. The map $\Phi$ defined above provides an admissible homotopy showing that $\deg_K(A,F;K)=\deg_K(A,0;K)$.
Take $R>0$ such that $C\subset B(u_0,R)$, let $W:=B(u_0,R)\cap K$ and $K':=\ov W=K\cap D(u_0,R)$. Then $K'$ is closed convex; hence $K'\in \ANR$. By definition
$\deg_K(A,0;K)=\ind_K(J_\l,W)$ for sufficiently small $\l>0$. Since $J_\l$ is nonexpansive and $J_\l(u_0)=u_0$ we get $J_\l(K')=J_l(\ov W)\subset\ov W=K'$. Hence, by the contraction and normalization properties of the index we get that
$$\ind_K(J_\l,W)=\ind_{K'}(J_\l,W)=\ind_{K'}(J_\l,K')=\chi\big(K'\cap\ov{D(A)}\big)=1.\eqno\qedhere$$
\end{proof}
As a consequence we get the counterpart of the local normalization property of the index.
\begin{Cor}\label{loc-norm} Suppose that $A$ is $\omega$-$m$-accretive with $\omega<0$, $A$ is densely defined or $E$ is uniformly convex, $K\subset E$ is closed convex. If $0\in Au_0$ for some $u_0\in D(A)$, then for any $U$ open in $K$,
$$\deg_K(A,0;U)=\begin{cases}1&\text{if}\;\; x_0\in U,\\
0&\text{otherwise}.\end{cases}$$
\end{Cor}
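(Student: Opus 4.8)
The plan is to reduce everything to Proposition~\ref{LS1} by a suitable choice of the constraint set and an excision/additivity argument. First I would fix $u_0\in D(A)$ with $0\in Au_0$ and distinguish the two cases. If $x_0\notin U$ — here I read $x_0$ as $u_0$, the zero of $A$, consistently with the statement of Proposition~\ref{LS1}~(b) — then $u_0$ is the only candidate for an element of $\Coin(A,0;U)$, since $u\in\Coin(A,0;U)$ forces $0\in Au$, and because $\omega<0$ the operator $A$ is strictly accretive (for $u,v\in D(A)$ and $p\in J(u-v)$ one has $\langle w-w',p\rangle\geq -\omega\|u-v\|^2$ for $w\in Au$, $w'\in Av$, so $0\in Au\cap Av$ is impossible unless $u=v$). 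Hence $\Coin(A,0;U)=\emptyset$ when $u_0\notin U$, and the Existence property in Theorem~\ref{main1}(1), read contrapositively together with the well-definedness of the degree on the empty coincidence set, gives $\deg_K(A,0;U)=0$. (One should note that $\Coin(A,0;U)$ being empty is trivially compact, so $(A,0)$ is admissible on $U$: the tangency condition $0\in T_K(u)$ holds since $K$ is convex and $0=u-u$ with $u\in K$, or simply because $0$ lies in every tangent cone.)

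For the case $u_0\in U$: I would choose $R>0$ so small that $D(u_0,R)\cap K\subset U$, which is possible since $U$ is open in $K$. Set $W:=B(u_0,R)\cap K$, an open (in $K$) subset of $U$ containing $u_0$. Since $\Coin(A,0;U)=\{u_0\}\subset W$ by the strict accretivity argument above, the Additivity/excision property (Theorem~\ref{main1}(2), applied with $U_2=\emptyset$, or more directly the localization built into the construction — see Step 5) gives $\deg_K(A,0;U)=\deg_K(A,0;W)$. Now I would apply Proposition~\ref{LS1}: the set $K':=\ov W=K\cap D(u_0,R)$ is closed, convex and bounded, $A$ restricted to this setting is $\omega$-$m$-accretive with $\omega<0$, $A$ is densely defined or $E$ is uniformly convex by hypothesis, and $0\in Au_0$. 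The map $F\equiv 0$ on $K'$ has bounded image, and the relevant homotopy-parameter set $C=\{u\in K'\cap D(A)\mid 0\in Au\}=\{u_0\}$ is bounded; in fact case~(a) of Proposition~\ref{LS1} applies since $K'$ is bounded. Therefore $\deg_{K'}(A,0;K')=1$. It remains to pass from $\deg_{K'}$ back to $\deg_K$: arguing exactly as in the last display of the proof of Proposition~\ref{LS1}, for sufficiently small $\l>0$ one has $\deg_K(A,0;W)=\ind_K(J_\l,W)=\ind_{K'}(J_\l,W)=\ind_{K'}(J_\l,K')=\deg_{K'}(A,0;K')=1$, using the contraction property of the fixed point index together with $J_\l(K')\subset K'$ (which follows from $J_\l$ nonexpansive and $J_\l u_0=u_0$) and the resolvent-invariance $J_\l(K)\subset K$.

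The main obstacle I expect is purely bookkeeping rather than conceptual: one must verify that all the admissibility requirements (Assumption~\ref{assad}) genuinely hold in each invocation — in particular that $(A,0)$ is admissible on $U$, on $W$ and on $K'$, that $J_\l(K)\subset K$ and $J_\l(K')\subset K'$ for the small $\l$ used, and that the degree is independent of the auxiliary data so that the chain of equalities above is legitimate. The one genuinely substantive point is the strict-accretivity observation that pins down $\Coin(A,0;\cdot)=\{u_0\}$ (or $\emptyset$); everything else is an assembly of Theorem~\ref{main1} and Proposition~\ref{LS1} together with the standard contraction/excision manipulations of the fixed point index on ANRs already exploited in the proof of Proposition~\ref{LS1}.
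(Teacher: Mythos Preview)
Your proposal is correct and rests on the same two ingredients as the paper's proof: the uniqueness of the zero $u_0$ of $A$ (from $\omega<0$, i.e., strict accretivity) and an appeal to Proposition~\ref{LS1}. The paper's argument is slightly more direct: it observes that $A=-\omega\big(I+(-\omega)^{-1}(A+\omega I)\big)$ is invertible, hence $\Coin(A,0;K)=\{u_0\}$, and then applies Proposition~\ref{LS1}\,(b) \emph{on $K$ itself} (with $F\equiv 0$, so $C=\{u_0\}$ and $F(C)=\{0\}$ are bounded) to get $\deg_K(A,0;K)=1$; excision then gives $\deg_K(A,0;U)=1$ when $u_0\in U$, while the empty coincidence set gives $0$ otherwise. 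Your route---excising first to a small ball $W$, applying Proposition~\ref{LS1}\,(a) on the bounded set $K'=\ov W$, and then bridging $\deg_K$ and $\deg_{K'}$ via the contraction property of the fixed point index---is valid but effectively replays the internal argument of Proposition~\ref{LS1}\,(b); you could shorten it by invoking part~(b) directly on $K$ and excising afterwards.
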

\begin{proof} For the proof it is sufficient to observe that $A$ is invertible since $A=-\omega(I+(-\omega)^{-1}(A+\omega I)$ and appeal to part (b) of the above Proposition.
\end{proof}
\begin{Prop}\label{LS2} Again, in addition to the standing assumptions, let $K\subset E$ be closed convex, $U=K$, $A$ an $\omega$-$m$-accretive operator with $\omega\leq 0$, $0\in Au_0$ for some $u_0\in D(A)$ and $A$ is densely defined on $E$ is uniformly convex. Suppose that $F$ is bounded on bounded sets and
$$\limsup_{\|u\|\to\infty,\;u\in K}\sup_{v\in F(u)}[u-u_0,v]_+<0.$$
Then $C=\Coin(A,F;K)\neq\emptyset$ is unbounded or $\deg_K(A,F;K)=1$.
\end{Prop}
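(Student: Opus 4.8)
The plan is to connect $F$, through an admissible homotopy, to the affine field $G(u):=u_0-u$, for which the degree is computed directly. Assume $C:=\Coin(A,F;K)$ is bounded (if it is unbounded the first alternative holds); then $F(C)$ is bounded, since $F$ is bounded on bounded sets, so by Remark~\ref{discussion}(1) the pair $(A,F)$ is admissible and $\deg_K(A,F;K)$ is defined. I will use repeatedly that $\omega\le 0$ makes $A$ accretive (item~(a) on p.~\pageref{accr}), so $A+I$ is injective, and that $J^A_\lambda$ is nonexpansive (item~(c) on p.~\pageref{accr}) with $J^A_\lambda u_0=u_0$ (because $0\in Au_0$); here $u_0\in K\cap D(A)$.

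First I would put $\Phi(u,t):=(1-t)G(u)+tF(u)$ for $u\in K$, $t\in[0,1]$, and $A(t)\equiv A$, and check that $(\{A\},\Phi)$ is an admissible homotopy for which $(D_3)$ holds. Indeed $\Phi$ is $H$-usc with convex weakly compact values, being the sum of the continuous single-valued $(u,t)\mapsto(1-t)(u_0-u)$ and the $H$-usc, locally bounded $(u,t)\mapsto tF(u)$; it is weakly tangent because $u_0-u\in T_K(u)$ ($K$ being convex) and the convex cone $T_K(u)$ then contains $(1-t)(u_0-u)+tv$ whenever $v\in F(u)\cap T_K(u)$. The one nontrivial point is that $\widetilde C:=\bigcup_{t\in[0,1]}\Coin(A,\Phi(\cdot,t);K)$ is bounded: if $w:=(1-t)(u_0-u)+tv\in Au$ with $v\in F(u)$, accretivity of $A$ applied to $(u,w)$ and $(u_0,0)$ yields, with $p:=J(u-u_0)$ (single-valued by $(D_1)$), that $\langle w,p\rangle\ge0$, i.e. $t\langle v,p\rangle\ge(1-t)\|u-u_0\|^2$; for $t=0$ this forces $u=u_0$, and for $t\in(0,1]$ it gives $\langle v,p\rangle\ge0$, hence $[u-u_0,v]_+\ge\langle v,p\rangle/\|u-u_0\|\ge0$ when $u\ne u_0$, so $\sup_{v'\in F(u)}[u-u_0,v']_+\ge0$ and the coercivity hypothesis bounds $\|u\|$. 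Thus $\widetilde C$ is bounded, $\Phi(\widetilde C\times[0,1])$ is bounded, and $\widetilde C$ is compact by Remark~\ref{discussion}(1). Homotopy invariance (Theorem~\ref{main1}(3)) then gives $\deg_K(A,F;K)=\deg_K(A,G;K)$.

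It remains to show $\deg_K(A,G;K)=1$. Since $A+I$ is injective and $u_0\in(A+I)u_0$, one has $\Coin(A,G;K)=\{u_0\}$. I would localize: fix $\rho>0$, set $W:=B(u_0,\rho)\cap K$ and $K':=\ov W=D(u_0,\rho)\cap K$, a bounded closed convex set, hence an $\ANR$. Because $u_0-u\in T_K(u)$ gives $d_K^\circ(u,u_0-u)=0$, the field $G$ itself belongs to $a(G,\eps)$ for every $\eps>0$; moreover, for $0<\lambda\le1$ and $u\in K$ the convex combination $(1-\lambda)u+\lambda u_0$ lies in $K$, so with $f=G$ the map $g^\eps_\lambda$ of \eqref{cons6} reduces to $g^\eps_\lambda(u)=J^A_\lambda\big((1-\lambda)u+\lambda u_0\big)$. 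Using that $J^A_\lambda$ is nonexpansive and fixes $u_0$, one checks $g^\eps_\lambda(K')\subset K'$ and $\Fix\big(g^\eps_\lambda|_{K'}\big)=\{u_0\}\subset W$. Then the contraction property of the index gives $\ind_K(g^\eps_\lambda,W)=\ind_{K'}(g^\eps_\lambda,W)$, additivity (excision) gives $\ind_{K'}(g^\eps_\lambda,W)=\ind_{K'}(g^\eps_\lambda,K')$, and — $K'$ being bounded and $g^\eps_\lambda$ compact — the global normalization property gives $\ind_{K'}(g^\eps_\lambda,K')=\Lambda(g^\eps_\lambda)=1$, the last equality because the convex set $K'$ is acyclic over $\Q$. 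Hence, by \eqref{def-deg}, $\deg_K(A,G;K)=\lim_{\lambda\searrow0}\ind_K(g^\eps_\lambda,W)=1$, so $\deg_K(A,F;K)=1$; in particular $C\ne\emptyset$ by the existence property.

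The delicate point — rather than a genuine obstacle — is the choice of the endpoint of the homotopy. The naive choice, homotoping $F$ to the zero map via $\Phi(u,t)=tF(u)$, is not available: $\Coin(A,0;K)=A^{-1}(0)\cap K$ need not be bounded under the present hypotheses (e.g. $A\equiv0$), so that homotopy need not be admissible. Using $G(u)=u_0-u$ instead keeps $\Coin(A,\Phi(\cdot,t);K)$ bounded at $t=0$ as well (it collapses to $\{u_0\}$), at the cost of evaluating $\deg_K(A,G;K)$ by hand; but once $\Coin(A,G;K)$ has been localized to the single point $u_0$ and everything restricted to the convex set $K'$, this is precisely the contraction/normalization computation already carried out at the end of the proof of Proposition~\ref{LS1}(b).
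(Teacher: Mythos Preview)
Your argument is correct in spirit and takes a genuinely different route from the paper. The paper does \emph{not} homotope $F$ to another field; instead it restricts the problem to the bounded convex set $K':=K\cap D(u_0,R)$, where $R$ is chosen so large that $C\subset K'$ and the semi-inner-product hypothesis forces $F(u)\subset T_{D(u_0,R)}(u)$ on $\partial D(u_0,R)$. Then $(A,F)$ is admissible on $K'$ (via the tangent-cone intersection formula), the normalization property of Theorem~\ref{main1}(4) gives $\deg_{K'}(A,F;K')=\chi(K'_A)=1$, and a retraction-interpolation homotopy in the style of Step~6 shows $\deg_K(A,F;W)=\deg_{K'}(A,F;K')$. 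Your approach --- linearly homotoping $F$ to the affine field $G(u)=u_0-u$, bounding $\widetilde C$ via accretivity and the coercivity hypothesis, and then computing $\deg_K(A,G;K)$ by hand through the index --- is arguably cleaner: it avoids building the degree on the auxiliary set $K'$ and the somewhat delicate ``two retractions'' comparison. The paper's approach, on the other hand, exploits the normalization property directly and makes the geometric role of the coercivity condition (tangency to a large ball) more transparent.

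One technical point deserves attention. You assert ``here $u_0\in K\cap D(A)$'', but the hypotheses only give $u_0\in D(A)$; membership $u_0\in K$ is \emph{not} stated. Your argument genuinely needs it: tangency of $G$ (i.e., $u_0-u\in T_K(u)$), the identification $\Coin(A,G;K)=\{u_0\}$, and the simplification $r\big((1-\lambda)u+\lambda u_0\big)=(1-\lambda)u+\lambda u_0$ all rely on $u_0\in K$. The paper's route sidesteps this entirely --- the ball $D(u_0,R)$ is $J_\lambda$-invariant because $J_\lambda u_0=u_0$ and $J_\lambda$ is nonexpansive, regardless of whether $u_0\in K$, and $K'\cap\overline{D(A)}$ is nonempty convex for large $R$ without that assumption. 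In applications one typically has $u_0\in K$ (e.g., $u_0=0$ in a cone), so your argument goes through; but as written it proves a slightly weaker statement than the paper's.
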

\begin{proof} Assume that $C$ is bounded and take $R>0$ such that $C\subset W:=B(u_0,R)$ and $\sup_{v\in F(u)}[u-u_0,v]_+\leq 0$ for all $u\in E$ with $\|u-u_0\|=R$. This implies that $F(u)\subset  T_{D(u_0,R)}(u)$ for any $u\in D(u_0,R)$. If $R$ is large enough, then $B(u_0,R)\cap K\neq\emptyset$ and, by \cite[Theorem 4.1.16]{A-E}, $T_{K\cap D(u_0,R)}(u)=T_K(u)\cap T_{D(u_0,R)}(u)$ for all $u\in K\cap D(u_0,R)$. Therefore $(A,F)$ is admissible with respect to the $\cal L$-retract $K':=K\cap D(0,R)$. Observe also that since $J_\l$ is nonexpansive, we get $J_\l(K')\subset K'$ for any $\l>0$.  Therefore we are in a position to define $\deg_K(A,F;K)=\deg_K(A,F;W)$ and $\deg_{K'}(A,F;K')$. By the normalization property we see that $\deg_{K'}(A,F;K')=1$. We now show that both degree are equal. To see this take an tangent $\eps$-approximation of $f\colon K\to E$ of $F$. If $\eps>0$ is small enough that
$deg_{K'}(A,F;K')=\ind_{K'}(g'_\lambda,K')$ where $g'_\l(u)=J_\l\circ r'(u+\l f(u))$ for $u\in K'$, $r'$ is an $\cal L$-retraction (defined on  $E$) onto $K'$ and $\l.0$ sufficiently small.
On the other hand $\deg_K(A,F;W)=\ind_K(g_\l,W)$, where $g_\l(u)=J_\l\circ r(u+\l f(u))$, $u\in\ov W$. Consider the map $h\colon \ov W\times [0,1]\to K$ given by
$$h(u,t)=J_\l\circ \bigg(u+\l f(u)+t\Big(r'\big(u_\l f(u)\big)-\big(u+\l f(u)\big)\Big)\bigg),\; u\in \ov W,\;t\in [0,1].$$
Exactly as in Step 6 of our construction we show that $h(u,t)\neq u$ for $u\in\part W$ and $t\in [0,1]$. Hence $$\deg_K(A,F;W)=\ind_K(h(\cdot,1),W),$$ where $h(\cdot,1)\colon \ov W\to K$ is given by $h(u,1)=J_\l\circ r'(u+\l f(u))$. This, in view of the contraction property of the index, concludes the proof. \end{proof}
\subsection{Weakly upper semicontinuous perturbations} Now, in addition to Assumptions \ref{assad} and \ref{assadhom}, we assume that
\vspace{-4mm}
\begin{enumerate}
\item[$(D_4)$] $A$ is a densely defined linear operator (see  (b) on page \pageref{accr});
\item[$(D_5)$] $(A,F)$ is admissible and $F\colon U\multi E$ is weakly usc with convex weakly compact values.
\end{enumerate}
\vspace{-2mm}
Recall that $A$, as the generator of a $C_0$ semigroup, is closed and densely defined.

\noindent {\sc The construction:} The idea of this construction is similar to that above. However under new assumptions tangent graph-approximations are not available. Therefore we consider \eqref{problem3} with  $f$ being an `approximation' of a different type (comp. \cite{Lasry, K-Diss}). Namely, it is possible to find a continuous field $q(u)\in E^*$, $u\in U$, such that if $Au\not\in F(u)$, then $q(u)$ separates $Au$ from $F(u)$, i.e., $Au$ and $F(u)$ lie in different half-spaces determined by the hyperplane induced by $q(u)$, and $f$ is a continuous map such that $f(u)$ lies in the same half-space as $F(u)$ does.\\
\indent As above we proceed in several steps.

\noindent {\em Step 1}: Take a bounded open (in $K$) set $W$ such that
\begin{equation}\label{cons8}C:=\Coin(A,F;U)=\big\{u\in U\cap D(A)\mid Au\in F(u)\big\}\subset W\subset\ov W\subset U\end{equation} and $F$ is bounded on $\ov W$, i.e., $\sup_{v\in F(u), u\in\ov W}\|v\|<\infty$.

\noindent {\em Step 2}: We need a technical lemma.
\begin{Lem}\label{sep} There are bounded continuous maps $q\colon \ov W\to E^*$, $w=w_q\colon \ov W\to\R$ and $\eps_0>0$  such that:
\begin{equation}\label{sep1}\begin{split} &w(u)=\big\la Au,q(u)\big\ra\; \text{for}\;  u\in \ov W\cap D(A),\;\;q(u)=0\;\; \text{for}\;\; u\in C\;\;\text{and}\\
&\inf_{v\in F(u)}\big\la v,q(u)\big\ra>w(u)\; \text{for}\; u\in W\setminus C,\\
&\inf_{v\in F(u)}\big\la v,q(u)\big\ra>w(u)+\eps_0\;\; \text{for}\; u\in\part W.
\end{split}\end{equation}
For any $0<\eps<\eps_0$ and $\gamma>0$, there is a continuous bounded  map $f\colon \ov W\to E$ such that for $u\in\part W$
\begin{equation}\label{sep2} d_K^\circ\big(u,f(u)\big)<\gamma\eps\;\;\text{and}\;\;\big\la f(u),q(u)\big\ra>w(u)+\eps.\end{equation}
\end{Lem}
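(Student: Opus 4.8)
The plan is to build $q$ and $w$ by a partition-of-unity patching argument, exploiting that $C$ is compact and that for each $u\notin C$ there is a functional strictly separating $Au$ (or, more precisely, the image $A u$, a singleton since $A$ is linear single-valued) from the weakly compact convex set $F(u)$. First, for each $u\in\ov W\setminus C$ pick $p_u\in E^*$ with $\inf_{v\in F(u)}\la v,p_u\ra>\la Au,p_u\ra$ when $u\in D(A)$, and more generally $\inf_{v\in F(u)}\la v,p_u\ra>\la Au,p_u\ra$ whenever $u\in D(A)$; by upper hemicontinuity of $F$ and closedness of $\Gr(A)$ in $E\times E^*_w$ (Corollary \ref{corclos}, using $(D_1)$ — or rather its analogue, since here $A$ is linear and closed) this strict inequality persists on a neighborhood $O_u$ of $u$ in $\ov W$: there are $\delta_u>0$ so that $\inf_{v\in F(u')}\la v,p_u\ra>\la Au',p_u\ra+\delta_u$ for $u'\in O_u\cap D(A)$. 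Here one uses that $F$ is locally bounded and weakly usc (so the $\inf$ is attained and varies lower-semicontinuously) together with continuity of $u'\mapsto\la Au',p_u\ra$ restricted to the graph — the latter requiring a little care because $A$ is unbounded, but $\la Au',p_u\ra$ is a fixed linear functional evaluated along $u'$, and one argues by contradiction using that a sequence $u_n'\to u'$ with $Au_n'$ bounded has, by resolvent compactness, $Au_n'\rightharpoonup Au'$. The sets $\{O_u\}_{u\in\ov W\setminus C}$ together with one more open set $O_0\supset C$ (on which we will force $q=0$) cover $\ov W$.

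Next, take a locally finite partition of unity $\{\lambda_0\}\cup\{\lambda_s\}_{s\in S}$ subordinated to this cover, with $\supp\lambda_0\subset O_0$ and $\supp\lambda_s\subset O_{u_s}$ for suitable $u_s$, and set $q(u):=\sum_{s\in S}\lambda_s(u)\,p_{u_s}$ and $w(u):=\sum_{s\in S}\lambda_s(u)\,\la Au_s',?\ra$ — more honestly, define $w(u):=\la Au,q(u)\ra$ directly for $u\in\ov W\cap D(A)$, which is the first requirement, and extend $w$ continuously off $D(A)\cap\ov W$. To see that $w$ so defined is continuous and bounded on all of $\ov W$: on the overlap of the supports, $\la Au,q(u)\ra=\sum_s\lambda_s(u)\la Au,p_{u_s}\ra$, and one shows $u\mapsto\la Au,p_{u_s}\ra$ extends continuously past the points of $D(A)$ appearing — again via the resolvent-compactness/weak-convergence argument, or simply because near any $u_0$ the combination is dominated by the (finitely many) separating inequalities. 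Then for $u\in\ov W\setminus C$, convexity of $v\mapsto\la v,q(u)\ra$ and the fact that $q(u)$ is a convex combination of the $p_{u_s}$ (over those $s$ with $u\in O_{u_s}$) give, for any $v\in F(u)$, $\la v,q(u)\ra=\sum_s\lambda_s(u)\la v,p_{u_s}\ra>\sum_s\lambda_s(u)(\la Au,p_{u_s}\ra+\delta_{u_s})=w(u)+\sum_s\lambda_s(u)\delta_{u_s}$, and since $\lambda_0(u)<1$ on $\ov W\setminus C$ — wait, that is not automatic; instead one arranges $O_0\cap\part W=\emptyset$ (possible since $C\subset W$), so on $\part W$ we have $\lambda_0\equiv0$ and $\sum_{s}\lambda_s(u)\delta_{u_s}\geq\eps_0:=\min$ over the finitely many $s$ meeting the compact set $\part W$ of $\delta_{u_s}$; on $W\setminus C$ one just gets the strict (non-uniform) inequality, which is what \eqref{sep1} asks. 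Setting $q(u)=0$ for $u\in C$ is consistent because $\lambda_s(u)=0$ there for $s\in S$ (as $O_{u_s}\not\ni$ points of $C$... — actually one must choose the $O_u$ to miss $C$; since $C$ is compact and $O_0$ is an open neighborhood, shrink each $O_u$, $u\notin C$, so that $\ov{O_u}\cap C=\emptyset$).

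For the second assertion: fix $0<\eps<\eps_0$ and $\gamma>0$. We want $f\colon\ov W\to E$ continuous, bounded, with $d_K^\circ(u,f(u))<\gamma\eps$ and $\la f(u),q(u)\ra>w(u)+\eps$ on $\part W$ — or in fact on a neighborhood of $\part W$; it is most natural to produce $f$ on all of $\ov W$ satisfying the separation inequality on $\ov W\setminus$(small neighborhood of $C$) and $d_K^\circ<\gamma\eps$ everywhere. This is exactly the situation of Lemma \ref{app1}: take $\Phi:=F$ (which is $H$-usc — here we should note $F$ is only weakly usc, so instead take $\Phi$ to be $F$ viewed through the weak topology, or apply the version of Lemma \ref{app1} that needs only upper hemicontinuity, which holds by fact (c) on page \pageref{set-val}), and take the test function $\xi(u,v):=\max\{d_K^\circ(u,v)/\gamma,\ w(u)+\eps-\la v,q(u)\ra\}$ — a maximum of two functions each usc in $u$ and convex in $v$, hence usc in $u$ and convex in $v$. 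For $u\in W\setminus C$ there is $v_u\in F(u)\cap T_K(u)$ (weak tangency, Assumption \ref{assad}(2)) with $d_K^\circ(u,v_u)=0$ and, by \eqref{sep1}, $\la v_u,q(u)\ra$ exceeding $w(u)$ — but we need it to exceed $w(u)+\eps$, which is only guaranteed on $\part W$ where the gap is $\eps_0>\eps$; near $C$ where $q(u)=0$ and $w(u)=0$ the requirement $\la v,q(u)\ra>w(u)+\eps$ becomes $0>\eps$, impossible. So one restricts the construction to $\ov W\setminus V$ for a suitable open $V\supset C$ with $\ov V\subset W$ on which the separation gap still exceeds $\eps$, applies Lemma \ref{app1} there to get $f$ on $\ov W\setminus V$, and extends continuously and boundedly to all of $\ov W$ using a Tietze/Dugundji extension (values in the convex hull of the already-chosen values keep $f$ bounded) — the extension need only satisfy $d_K^\circ(u,f(u))<\gamma\eps$ near $C$, which we can also fold into $\xi$ by doing the Lemma \ref{app1} construction with the single test function $d_K^\circ/\gamma$ on a neighborhood of $C$ and gluing. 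Since only the behavior on $\part W$ is asserted in \eqref{sep2}, these gluings cause no trouble.

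\textbf{Main obstacle.} The delicate point is the interaction between the \emph{unboundedness} of the linear operator $A$ and the separation/continuity claims: showing that $u\mapsto\la Au,p\ra$ behaves well enough (is continuous on the relevant pieces, and that strict separation on $D(A)$ propagates to a neighborhood in $\ov W$) genuinely uses resolvent compactness — a bounded sequence $Au_n$ with $u_n\to u_0$ has $u_n=J_1^A(u_n+Au_n)$ with $u_n+Au_n$ bounded, forcing (a subsequence of) $Au_n$ to converge weakly to $Au_0$ by Corollary \ref{corclos}, and this is what prevents the separating functionals from degenerating. The second, more bookkeeping, obstacle is organizing the three regimes — $\part W$ (uniform gap $\eps_0$), $W\setminus C$ (strict but non-uniform gap), and a neighborhood of $C$ (where $q=0$, so only the tangency/$d_K^\circ$ condition survives) — into a single continuous $f$; this is handled by choosing the intermediate set $V$ and patching, as sketched.
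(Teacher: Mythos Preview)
Your overall architecture (local separating functionals, partition of unity, then an approximation of Lemma~\ref{app1} type for $f$) is right, but there is a genuine gap in the construction of $w$, and a second smaller one in obtaining the uniform $\eps_0$.

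\textbf{The main gap.} You define $w(u)=\la Au,q(u)\ra$ for $u\in\ov W\cap D(A)$ and then say ``extend $w$ continuously off $D(A)\cap\ov W$''. This does not work: $A$ is unbounded, and for a sequence $u_n\to u_0$ with $u_n\in D(A)$ there is no reason whatsoever for $\|Au_n\|$ to stay bounded, so $\la Au_n,p\ra$ need not converge. Your suggested rescue via resolvent compactness goes the wrong way --- resolvent compactness says that \emph{if} $(u_n)$ and $(Au_n)$ are both bounded then $(u_n)$ has a convergent subsequence; it gives you nothing when you only know $u_n\to u_0$. The paper's key idea, which you are missing, is to transfer the whole problem through the resolvent before separating: set $G(u):=J_{\l_0}\big(u+\l_0 F(u)\big)-u$ on $\ov W$. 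Then $G$ is defined everywhere, weakly usc with convex weakly compact values, and $0\in G(u)\iff u\in C$. One separates $0$ from $G(u)$ by functionals $q_u\in E^*$ (via the Sion min--max equality), patches by a partition of unity to a continuous $q_0$, and then sets
\[
q:=\l_0 J_{\l_0}^*\circ q_0,\qquad w(u):=\big\la u-J_{\l_0}u,\,q_0(u)\big\ra.
\]
The point is that $w$ is \emph{manifestly} continuous on all of $\ov W$ (only the bounded operator $J_{\l_0}$ appears), while linearity of $J_{\l_0}$ gives $u-J_{\l_0}u=J_{\l_0}(\l_0 Au)$ for $u\in D(A)$, whence $w(u)=\la Au,q(u)\ra$ there. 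The separation inequality then drops out of $\la J_{\l_0}(u+\l_0 v)-u,q_0(u)\ra=\la v,q(u)\ra-w(u)$.

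\textbf{The secondary gap.} You write ``$\eps_0:=\min$ over the finitely many $s$ meeting the compact set $\part W$''. But $\part W$ is the boundary in $K$ of a bounded open set in an infinite-dimensional space; it is not compact, and a locally finite cover need not be finite on it. In the paper the uniform bound on $\part W$ is obtained \emph{before} the partition-of-unity step, by a contradiction argument: if $\inf_{v\in G(u_n)}\|v\|\to 0$ along $u_n\in\part W$, then (using boundedness of $F(\ov W)$, compactness of $J_{\l_0}$ and weak usc of $F$) one extracts $u_n\to u_0\in\part W$ with $0\in G(u_0)$, i.e.\ $u_0\in C$ --- contradicting $C\subset W$. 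This is the analogue of Step~2 in the $H$-usc construction.

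For the second assertion your idea is essentially the paper's: pick $v_u\in F(u)\cap T_K(u)$ on $\part W$, cover $\part W$ by sets where both inequalities persist, average by a partition of unity, and extend boundedly to $\ov W$ (only the behaviour on $\part W$ is claimed). You do not need the $H$-usc hypothesis of Lemma~\ref{app1} here because no graph-approximation conclusion is required --- only the two scalar inequalities --- so the direct argument suffices and your worry about $F$ being merely weakly usc is unfounded.
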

\noindent The map $q$ separates $Au$ and $F(u)$ if $u\not\in C$ (see second and third condition in \eqref{sep1}). Condition \eqref{sep2} means that $f$ is an `approximation' (in the above mentioned sense) and is `almost' tangent to $K$.
\begin{proof}
Define $G(u):=J_0\big(u+\l_0 F(u)\big)-u$, $u\in \ov W$, where $J_0:=J_{\l_0}$ with $\l_0>0$ such that $\l_0(\omega+1)\leq1$. Since $J_0$ is linear, it is weak-to-weak continuous. Therefore $G$ is weakly usc and has convex weakly compact values. Moreover $\big\{u\in \ov W\mid 0\in G(u)\big\}=C:=\Coin(A,F;U)$. Hence for any $u\in \ov W\setminus C$, $0\not\in G(u)$, i.e., $\inf_{v\in G(u)}\|v\|>\beta_u>0$. As it is not difficult to see, there is $\eps_0>0$ such that $\inf_{v\in G(u)}\|v\|>\eps_0$ for $u\in \part W$ (in the proof, similar to that provided in Step 2 of the previous construction, the `strong$\times$weak' closedness of $\Gr(A)$ plays a role).  Given $u\in \ov W\setminus C$, we have
$$\inf_{v\in G(u)}\|v\|=\inf_{v\in G(u)}\sup_{q\in E^*,\|q\|\leq 1}\la v,q\ra=\sup_{q\in E^*,\|q\|\leq 1}\inf_{v\in G(u)}\la v,q\ra$$
in view of the Sion version of the von Neumann min-max equality. Hence for any $u\in\ov W$ there is $q_u\in E^*$, $\|q_u\|\leq 1$, such that  $\inf_{v\in G(u)}\la v,q_u\ra>\beta_u$ and, for $u\in\part W$,
$\inf_{v\in G(u)}\la v,q_u\ra>\eps_0$. For $u\in W$, let $V(u):=\{y\in W\setminus C\mid \inf_{v\in G(y)}\la y,q_u\ra>\beta_u\}$ and for $u\in\part W$ let $V(u)=\{y\in \ov W\setminus C\mid\inf_{v\in G(y)}\la v,q_u\ra>\eps_0\}$. Evidently $\big\{V(u)\big\}_{u\in \ov W}$ is an open covering of $\ov W$ since $G$ is uhc (see page \pageref{set-val}). Let $\{\l_s\}_{s\in S}$ be a partition of unity subordinated to this cover, i.e., for any $s\in S$, there is $u_s\in \ov W\setminus C$ such that $\supp\l_s\subset V(u_s)$. Additionally take a continuous $\mu\colon \ov W\to [0,1]$ such that $C=\mu^{-1}(0)$, $\part W=\mu^{-1}(1)$ and define $q_0\colon U\to E^*$ by
$$q_0(u):=\begin{cases}\mu(u)\sum_{s\in S}\l_s(u)q_{u_s}& \text{for}\;u\in \ov W\setminus C,\\
0& \text{for}\;u\in C.\end{cases}$$
It is easy to see that $q_0$ is continuous and so is $q\colon U\to E^*$ given by $q:=\l_0 J_0^*\circ q_0$; evidently $\big\|q(u)\big\|\leq\l_0\|J_0^*\|\big\|q_0(u)\big\|\leq 1$ for $u\in\ov W$. Let $w\colon U\to\R$ be given by $w(u):=\big\la u-J_0(u),q_0(u)\big\ra, u\in U$. Then $w$ is continuous,
$$w(u)=\Big\la J_0\big((u+\l_0 Au)-u\big),q_0(u)\Big\ra=\big\la Au,q(u)\big\ra\;\; \text{for}\;\; u\in D(A)\cap \ov W,$$
for $u\in \ov W\setminus C$
\begin{align*}\inf_{v\in F(u)}\big\la v, q(u)\big\ra-w(u)\geq &\;\mu(u)\sum_{s\in S}\l_s(u)\inf_{v\in F(u)}\big\la J_0(u+\l_0 v)-u,q_{u_s}\big\ra>
\mu(u)\sum_{s\in S}\l_s(u)\beta_{u_s}>0\end{align*}
and $\inf_{v\in F(u)}\big\la v, q(u)\big\ra-w(u)>\eps_0$  for $u\in\part W$.\\
\indent Take any $\gamma>0$ and $0<\eps<\eps_0$. For $u\in\part W$ take $v_u\in F(u)\cap T_K(u)$. Then $\big\la v_u,q(u)\big\ra>w(u)+\eps_0>w(u)+\eps$. Let $T(u):=\big\{y\in\part W\mid
d_K^\circ(y,v_u)<\gamma\eps,\; \big\la v_u,q(y)\big\ra>w(y)+\eps\big\}$. Then $\{T(u)\}_{u\in\part W}$ is an open cover of $\part W$. Le $\{\lambda_s\}_{s\in S}$ be a partition of unity subordinated to this cover, i.e., for $s\in S$ there is $u_s\in\part W$ such that $\supp\lambda_s\subset T(u_s)$. Define $\bar f\colon \part W\to  E$
$$\bar f(u)=\sum_{s\in S}\lambda_s(u)v_{u_s},\; u\in\part W.$$
Then $\bar f$ is continuous, bounded and for any $u\in\part W$
$$\big\la \bar f(u),q(u)\big\ra=\sum_{s\in S}\lambda_s(u)\la v_{u_s},q(u)\ra>w(u)+\eps,\;\; d_K^\circ\big(u,\bar f(u)\big)<\gamma\eps.$$
Finally let $f\colon \ov W\to E$ be an arbitrary continuous bounded extension of $\bar f$.\end{proof}
\noindent {\em Step 3}: Take $\gamma=\frac{1}{L}$, where $L$ is the constant from the definition of an $\cal L$-retract, $0<\eps<\eps_0$ and $f$ satisfying condition \eqref{sep2}. For any $\l>0$ with $\l\omega<1$ define $g_\l\colon \ov W\to K$ by
$$g_\l(u):=J_\l\circ r\big(u+\l f(u)\big),\;\; u\in \ov W,$$
where $r\colon B(K,\eta)\to K$ be an $\cal L$-retraction (with the constant $L$). It is clear that wlog we may assume that this definition is correct for $0<\l<\bar\l$ (see Assumption \ref{assad} (1)). We claim that there is $\l_0>0$, $\l_0\omega<1$ such that for $0<\l<\l_0$,
$$\Fix g_\l=\big\{u\in\ov W\mid u=g_\l(u)\big\}\subset W.$$
\indent Suppose to the contrary. As in Step 3 of the previous construction we get sequences $\l_n\to 0$, $0<\l_n<\bar\l$, $u_n\in\part W\cap D(A)$ such that (after passing to a subsequence) $u_n\to u_0\in\part W$ and
$$\big\|Au_n-f(u_n)\big\|\leq L\frac{d_K\big(u_n+\l_nf(u_n)\big)}{\l_n}\leq L\frac{d_K\big(u_n+\l_nf(u_0)\big)}{\l_n}+L\big\|f(u_n)-f)u_0)\big\|.$$
On the other hand
$$\eps_0<\big\la f(u_n),q(u_n)\big\ra-w(u_n)=\big\la f(u_n),q(u_n)\big\ra-\big\la Au_n,q(u_n)\big\ra\leq \sup_{u\in\ov W}\big\|q(u)\big\|\big\|f(u_n)-Au_n\big\|\leq \|f(u_n)-Au_n\|.$$
Passing with $n\to\infty$ we get
$$\eps\leq \limsup_{n\to\infty}\big\|Au_n-f(u_n)\big\|\leq Ld_K^\circ\big(u_0,f(u_0)\big)<\eps.$$
A contradiction concludes the proof.

\noindent {\em Step 4}: We are in a position to define $\ind_K(g_\l,W)$ if $\l$ and $g_\l$ are as above and then let
\begin{equation}\label{def-deg-2}d_K(A,F;U)=\lim_{\l\to 0}\ind_K(g_\l,W).\end{equation}
It is relatively easy to show, by the use of arguments similar to those used in the previous construction, that this definition is correct since it does not depend on the choice of $W$ satisfying condition \eqref{cons8}, a separating map $q$ satisfying condition \eqref{sep1}, `approximation' $f$ satisfying condition \eqref{sep2} and an $\cal L$-retraction $r$.
\begin{Th}\label{main2} The function $\deg_K$ defined by \eqref{def-deg-2} has the properties enlisted in Theorem \ref{main2}. We leave the detailed formulation to a reader.
\end{Th}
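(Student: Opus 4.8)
The statement asks for the analogues, in the weakly usc setting, of properties (1)--(4) of Theorem \ref{main1} (the obvious adaptations: in Normalization uniform convexity of $E^*$ is no longer needed, and admissible homotopies now join weakly usc perturbations along linear resolvent compact families). The plan is to follow the four proofs of Theorem \ref{main1} almost word for word, with the tangent graph-approximations $f\in a(F,\eps)$ (unavailable when $F$ is only weakly usc) replaced by the separating field $q$ and the `approximation' $f$ furnished by Lemma \ref{sep}, and with Step 3 of the second construction used in place of Step 3 of the first. Throughout one exploits that $A$ is linear and densely defined, so each resolvent $J_\l$ is linear, bounded and hence weak-to-weak continuous, and that $A$ is resolvent compact; these two facts take over the role that uniform convexity of $E^*$ and Corollary \ref{corclos} played in the first construction. \emph{Additivity} is then immediate: taking the localizing set $W$ of Step 1 inside $(U_1\cup U_2)\setminus\ov{U_1\cap U_2}$ and applying the additivity of $\ind_K$ to the compact map $g_\l$, whose fixed point set lies in $W$, gives the formula.

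I would prove \emph{Existence} by contraposition. Assume $C:=\Coin(A,F;U)=\emptyset$. Running the proof of Lemma \ref{sep} with $\mu\equiv 1$, the sequential argument that there produced the gap $\eps_0$ on $\partial W$ (using compactness of the resolvent $J_0$ and its weak-to-weak continuity, exactly as written) now yields $\eps_0>0$ with $\inf_{v\in G(u)}\|v\|>\eps_0$ for \emph{every} $u\in\ov W$, since a limit point $u_0$ of a sequence violating this would satisfy $Au_0\in F(u_0)$, i.e.\ $u_0\in C$. The min-max step then gives $q\colon\ov W\to E^*$ with $\|q\|\le 1$ and $w$ with $w(u)=\la Au,q(u)\ra$ on $D(A)\cap\ov W$ and $\inf_{v\in F(u)}\la v,q(u)\ra>w(u)+\eps_0$ on all of $\ov W$; and, because $F(u)\cap T_K(u)\neq\emptyset$ for \emph{every} $u\in\ov W$, the partition-of-unity construction of the `approximation' yields, for $0<\eps<\eps_0$ and $\gamma=1/L$, a bounded continuous $f\colon\ov W\to E$ with $d_K^\circ(u,f(u))<\gamma\eps$ \emph{and} $\la f(u),q(u)\ra>w(u)+\eps$ for all $u\in\ov W$. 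Hence $\|Au-f(u)\|\ge\la f(u)-Au,q(u)\ra=\la f(u),q(u)\ra-w(u)>\eps$ on $D(A)\cap\ov W$. If $\deg_K(A,F;U)\neq 0$, then by the Existence property of $\ind_K$ the map $g_{\l_n}(u)=J_{\l_n}\circ r(u+\l_nf(u))$ has a fixed point $u_n$ for some $\l_n\searrow 0$; setting $v_n:=Au_n$, the $\cal L$-retract inequality gives $\l_n\|v_n-f(u_n)\|=\|r(u_n+\l_nf(u_n))-(u_n+\l_nf(u_n))\|\le L\,d_K(u_n+\l_nf(u_n))\le L\l_n\|f(u_n)\|$, so $(v_n)$ is bounded and, by resolvent compactness (Corollary \ref{corclos}), a subsequence of $(u_n)$ converges to some $u_0\in\ov W$. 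From $\|v_n-f(u_n)\|\le L\,d_K(u_n+\l_nf(u_n))/\l_n\le L\big(d_K(u_n+\l_nf(u_0))/\l_n+\|f(u_n)-f(u_0)\|\big)$ and the definition of the Clarke derivative one gets $\limsup_n\|v_n-f(u_n)\|\le L\,d_K^\circ(u_0,f(u_0))<L\gamma\eps=\eps$, contradicting $\|v_n-f(u_n)\|>\eps$. Thus $g_\l$ is fixed-point-free for small $\l$ and $\deg_K(A,F;U)=\ind_K(g_\l,W)=0$.

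For \emph{Normalization}, when $K$ is bounded and $F(K)$ is bounded we have $\deg_K(A,F;K)=\ind_K(g_\l,K)$, where $g_\l(u)=J_\l\circ r(u+\l f(u))$ is a compact map with $g_\l(K)\subset K$ (by resolvent invariance and the $\cal L$-retraction), and $K_A=K\cap\ov{D(A)}=K$ because $D(A)$ is dense. One then repeats the Lefschetz argument from the proof of Theorem \ref{main1}(4): $g_\l$ is compact at large, hence a Lefschetz map with $\Lambda(g_\l)=\ind_K(g_\l,K)$; the homotopy $h(u,t):=g_{t\l}(u)$ for $t\in(0,1]$ and $h(u,0):=u$ is continuous by property (c) on page \pageref{accr} ($J_\mu u\to u$ as $\mu\to 0^+$ since $D(A)$ is dense) and joins $g_\l$ to $\id_K$; therefore $H_*(g_\l)=H_*(\id_K)=\id_{H_*(K)}$ is a Leray endomorphism, so $\chi(K_A)=\chi(K)$ is a well-defined integer equal to $\Lambda(g_\l)=\deg_K(A,F;K)$.

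The step requiring genuinely new work, and the one I expect to be the main obstacle, is \emph{homotopy invariance}. It needs a parametrized version of Lemma \ref{sep}: a bounded continuous separating field $q\colon\ov W\times[0,1]\to E^*$ and a function $w$ with $w(u,t)=\la A(t)u,q(u,t)\ra$, vanishing on $\widetilde C$, separating $A(t)u$ from $\Phi(u,t)$ off $\widetilde C$ and doing so with a uniform positive gap on $\partial W\times[0,1]$ --- here one must add, as the homotopy analogue of $(D_4)$, that the whole resolvent compact family $\{A(t)\}_{t\in[0,1]}$ consists of linear densely defined operators. Building this field and the matching `approximation' $f\colon\ov W\times[0,1]\to E$ runs parallel to the proof of Lemma \ref{sep} with an extra parameter; the delicate point, analogous to condition \eqref{+app} of Corollary \ref{app3}, is to arrange that $f(\cdot,i)$ together with $q(\cdot,i)$ is an admissible separating pair for $(A(i),\Phi(\cdot,i))$, $i=0,1$, so that $\ind_K(g(\cdot,i),W)=\deg_K(A(i),\Phi(\cdot,i);U)$, where $g(u,t):=J^{A(t)}_\l\circ r(u+\l f(u,t))$. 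Granting this, $g$ is compact on $\ov W\times[0,1]$ by resolvent compactness of the family, its fixed point set is contained in $W$ by the argument of Step 3, so $g$ is an admissible homotopy and $\ind_K(g(\cdot,0),W)=\ind_K(g(\cdot,1),W)$ is exactly the assertion.
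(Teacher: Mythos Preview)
Your proposal is correct and follows essentially the same approach as the paper's own (very terse) proof, which merely says to ``follow the arguments of Theorem \ref{main1}'' and, for Existence, to construct $q$ and $f$ so that conditions \eqref{sep1}, \eqref{sep2} hold on all of $\ov W$ rather than just on $\partial W$. Your expansion of this sketch---running Lemma \ref{sep} with $\mu\equiv 1$ when $C=\emptyset$, deriving the contradiction $\eps\leq\limsup_n\|Au_n-f(u_n)\|<\eps$ via the $\cal L$-retract estimate and the Clarke derivative, and noting that $K_A=K$ by density in the Normalization argument---is exactly the intended filling-in; the paper gives no further detail on homotopy invariance either, so your identification of the parametrized separating field as the genuine work there is apt.
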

\begin{proof} One follows (at least from the conceptual viewpoint) the arguments of the proof of Theorem \ref{main1}. For instance in order to get the existence we assume to the contrary that $C=\emptyset$ and construct a separating map $q$ and an `approximation' $f$ such that conditions \eqref{sep1}, \eqref{sep2} hold on the whole $\ov W$. In this case, as it is easy to see, $\ind_K(g_\l,W)=0$ for all sufficiently small $\l$.
\end{proof}
\begin{Rem} {\em Let us now observe that if $A$ is a resolvent compact linear quasi-$m$-accretive operator and $F$ is weakly tangent and $H$-usc, then two degree theories are available. The degree $\deg_K(A,F;U)$ may be defined via formulae \eqref{def-deg} or \eqref{def-deg-2}. It can be easily seen by observing that if $f$ is a tangent $\eta$-(graph)-approximation of $F$, then the condition \eqref{sep2} is satisfied provided that $\eta>0$ is sufficiently small.\\
\indent In particular if $A$ is $m$-accretive (resp. linear and $m$-accretive) and  $F$ is a single-valued map, then the degree defined via \eqref{def-deg} (resp. \eqref{def-deg-2}) coincide with the degree considered in \cite{b:C-K2}.
Hence, apart from from results being direct consequences of Propositions \ref{LS1}, \ref{LS2} and Corollary \ref{loc-norm}, we get the following}\end{Rem}

\begin{Prop} {\em (see \cite[Proposition 4.2]{b:C-K2})} Assume that $K$ is a closed convex cone, $\lambda_1>0$ is the smallest
real eigenvalue of $A$ to which there corresponds an eigenvector
$u_1\in K\setminus\{0\}$ such that $(A-\lambda I)^{-1}u_1\cap
K=\emptyset$ and $\ker (A-\lambda I)\cap K=\{0\}$ for all
$\lambda>\lambda_1$. Then
$$\deg_K(A,\l I,K)=\begin{cases}1&\text{if}\;\;\l<\l_1\\
0&\text{if}\;\; \l>\l_1. \end{cases}\eqno\square$$
\end{Prop}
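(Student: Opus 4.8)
The plan is to treat the regimes $\l<\l_1$ and $\l>\l_1$ separately, in each case connecting $\l I$ by an admissible homotopy to a perturbation whose coincidence set is $\{0\}$ or empty, and then reading off the degree. All the homotopies keep the operator $A$ fixed, so resolvent compactness and the resolvent invariance $J_\mu^A(K)\subset K$ are inherited automatically; the elementary fact I would use repeatedly is that, $K$ being a closed convex cone, $K+K\subset K$ and $tK\subset K$ for $t\ge0$, whence for every $u\in K$, $v\in K$ and $\mu\in\R$ one has $\mu u+v\in T_K(u)$ (because $u+h(\mu u+v)=(1+h\mu)u+hv\in K$ for small $h>0$). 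In particular the maps $u\mapsto t\l u$ and $u\mapsto\l u+tu_1$ below are weakly tangent to $K$, so the only admissibility condition needing a separate argument is compactness of the coincidence set.

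For $\l<\l_1$ I would take the homotopy $\Phi(u,t):=\{t\l u\}$, $t\in[0,1]$ (single-valued, hence $H$-usc with convex weakly compact values). For each $t$ the scalar $t\l$ is $<\l_1$, so by minimality of $\l_1$ it has no eigenvector of $A$ in $K\setminus\{0\}$; thus $\Coin(A,t\l I;K)=\ker(A-t\l I)\cap K=\{0\}$ and $\widetilde C=\{0\}$ is compact. Homotopy invariance gives $\deg_K(A,\l I;K)=\deg_K(A,0;K)$, and since $0<\l_1$ also gives $\Coin(A,0;K)=\ker A\cap K=\{0\}$, Proposition~\ref{LS1} applied with $F\equiv0$ yields $\deg_K(A,0;K)=1$ (here $0\in A0$, $C=\{0\}$ is bounded, and $K\cap\ov{D(A)}$ is convex, so its Euler characteristic is $1$; cf. also Corollary~\ref{loc-norm}).

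For $\l>\l_1$ I would exploit the eigenvector $u_1\in K\setminus\{0\}$, $Au_1=\l_1u_1$, together with the hypothesis $(A-\l I)^{-1}u_1\cap K=\emptyset$, through the homotopy $\Psi(u,t):=\{\l u+tu_1\}$, $t\in[0,1]$. At $t=0$ we have $\Coin(A,\Psi(\cdot,0);K)=\ker(A-\l I)\cap K=\{0\}$ by assumption, while for $t\in(0,1]$ a solution $u\in K\cap D(A)$ of $Au=\l u+tu_1$ would give $(A-\l I)(t^{-1}u)=u_1$ with $t^{-1}u\in K$ (the cone property!), contradicting $(A-\l I)^{-1}u_1\cap K=\emptyset$. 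Hence $\widetilde C=\{0\}$ is compact, the homotopy is admissible, and $\deg_K(A,\l I;K)=\deg_K(A,\l I+u_1;K)$; since this last coincidence set is empty, the existence property forces $\deg_K(A,\l I+u_1;K)=0$, so $\deg_K(A,\l I;K)=0$.

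I expect the only delicate point to be the identity $\deg_K(A,0;K)=1$: it requires leaving the unbounded cone, restricting to a bounded closed convex $\cal L$-retract $K'=K\cap D(0,\rho)$ on which the resolvent $J_\mu^A$ acts, and then combining the contraction and global normalization properties of the fixed point index with the contractibility of $K'\cap\ov{D(A)}$, exactly as in the proofs of Theorem~\ref{main1}(4) and Proposition~\ref{LS1} — which is why it is cleanest to quote the latter. Everything else is a routine verification that the two homotopies are admissible, using only the convex-cone structure of $K$ and the stated spectral conditions.
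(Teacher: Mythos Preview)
Your argument is correct. The paper does not supply its own proof of this proposition --- the $\square$ after the statement signals that it is simply quoted from \cite[Proposition 4.2]{b:C-K2} --- but the strategy you follow is exactly the one the paper itself employs in the closely related Theorem~\ref{tw:wzordla_minusA+D_01}: for $\l<\l_1$ one checks that the scaling homotopy $t\mapsto t\l I$ has coincidence set $\{0\}$ and invokes Proposition~\ref{LS1}(b) (indeed, Proposition~\ref{LS1} with $F=\l I$ already encodes your homotopy, so the separate reduction to $F\equiv 0$ is not strictly needed); for $\l>\l_1$ one translates by a fixed element of $K$ --- here the eigenvector $u_1$ itself --- verifies via the cone property and the hypothesis $(A-\l I)^{-1}u_1\cap K=\emptyset$ that no coincidences survive for $t>0$, and concludes by the existence property. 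Your remark that the only subtle point is $\deg_K(A,0;K)=1$ on an unbounded cone, handled by Proposition~\ref{LS1}(b), is also in line with how the paper proceeds.
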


\section{Nonlinear reaction-diffusion equation}

We now we shall apply the introduced degree to discuss the existence of solutions to  problem \eqref{reac-diff}, i.e. the $M$-dimensional system
\begin{equation}\label{reac-diff1}
\left\{\begin{array}{l} -\Delta(\rho\circ u)(x)\in\vp\big(u(x)\big),\; u=(u_1,...,u_M)\in\R^M,\;x\in\Omega,\\
u_i(x)\geq 0,\;x\in\Omega,\;i=1,...,M,\\
u|_{\partial\Omega}=0,\end{array}\right.
\end{equation}
$\Omega\subset\R^N$ is open bounded with smooth boundary $\partial\Omega$ and $\Delta$ denotes the vectorial Laplace operator (\footnote{If a function $v=(v_1,...,v_M)\colon \Omega\to\R^M$ is twice differentiable (or $v\in H^2(\Omega,\R^M)$), then $\Delta v=(\Delta v_1,...,\Delta v_M)$ and $\Delta v_i=\sum_{j=1}^N\part^2_{jj}v_i$, $i=1,...,M$.}).
\begin{Ass}\label{rd-system} {\em Let us make the following standing assumptions.
\begin{enumerate}
\item $\vp\colon\R^M_+\multi\R^M$ is usc with compact convex values and sublinear linear growth, i.e., for some $c>0$
\begin{equation}\label{fi-growth}\sup_{v\in \vp(u)}\|v\|\leq c\big(1+|u|\big),\ \ u\in\R^M_+;\end{equation}
\item for any $u\in \R^M_+$ there is $v\in\vp(u)$ such that $v_i\geq 0$ if $u_i=0$;
\item $\rho=(\rho_1,...,\rho_M)\colon\R^M_+\to\R^M_+$ is a homeomorphism and $\big|\rho(u)\big|\geq \alpha |u|$ for some $\alpha>0$;
\item For any $I\subset\{1,...,M\}$ the face $\{y\in\R^M_+\mid y_i=0\; \text{for}\; i\in I\}$ is invariant with respect to $\rho$.
\end{enumerate}
}\end{Ass}
We say that a function $u\colon\Omega\to\R^M_+$ is a ({\em weak}) {\em solution} to \eqref{reac-diff1} if $\rho\circ u\in H^1_0(\Omega,\R^M)$ and there is a function $v\in L^2(\Omega,\R^M)$ such that $v(x)\in \vp(u(x))$ a.e. on $\Omega$ and for any $\phi\in H_0^1(\Omega,\R^M)$
$$\int_\Omega\big\la v(x),\phi(x)\big\ra_{\R^M}\,\d x=\int_\Omega\sum_{i=1}^M\big\la\nabla \rho_i(u(x)),\nabla\phi_i(x)\big\ra_{\R^N}\,\d x.$$
It is easy to see that if $u$ is a solution, then $u\in L^2(\Omega,\R^M)$ and $u|_{\part\Omega}=0$ in the sense that there is a sequence $(u_n)$ in $C^\infty_0(\Omega,\R^M)$ such that $\|u_n-u\|_{L^2(\Omega,\R^M)}\to 0$.
\begin{Rem}{\em In case of single-valued mapping $\vp$, i.e., if $\vp=(\vp_1,...,\vp_M)$ with $\vp_j\colon \R^M\to\R$, problem \eqref{reac-diff1} is a reaction-diffusion system describing  equilibria  of the distribution of $M$ substances subject to chemical reactions and diffusion. The above problem corresponds to the situation of the problem under control, with uncertainties  or discontinuities. The constraint $u_j(x)\geq 0$, $x\in\Omega$, $1\leq j\leq M$ is physically justified and means that the concentration $u_i$ of the $i$-th reactant  is nonnegative. It is worth pointing out here, that contrary to many other results, we avoid the assumption of `nonnegativity' of $\varphi$, implying that all substances are only produced. This is unrealistic since during the process some reactants vanish or are transformed. Instead we assume \ref{rd-system} (2).  It actually means that if some reactant vanishes in some area, its amount (in this area) cannot decrease.
The reaction-diffusion problem was studied by many authors,  see e.g. \cite{Bothe} and references therein,  but the case under constraints is still not very well recognised. In \cite{b:C-K2}, (see also \cite{C-M}) problem \eqref{reac-diff1} was considered in a one-dimensional case, i.e. if $M=1$, and with a single-valued $\varphi$ is single-valued. The situation $M>1$ with a single-valued perturbation was treated in \cite{M}.
}\end{Rem}

Let $E=L^2(\Omega,\R^m)$ and let
$$K:=\big\{u\in E\mid u_i(x)\geq 0\;\; \text{for a.a.}\;\; x\in\Omega\big\}.$$
By $A\colon E\supset D(A)\to E$ we denote the self-adjoint  Dirichlet $L^2$-realization of the classical vectorial  Laplacian $-\Delta$ (cf. e.g., \cite[Theorem 4.27]{Grubb}), i.e.,  it arises from the Lions-Lax-Milgram construction (cf. \cite[Theorem 12.18]{Grubb}) and let $F\colon K\to E$ be the Nemytskii operator  $N_{\vp\circ\gamma}$ determined by $\vp\circ\gamma$, i.e.,
$$F(u)=N_{\vp\circ\gamma}(u):=\big\{v\in E\mid v(x)\in \vp\circ \gamma\big(u(x)\big)\; \text{for a.a.}\; x\in\Omega\big\}, $$
where $\gamma:=\rho^{-1}\colon \R^M_+\to\R^M$. It is clear that $u\in E$ solves \eqref{reac-diff1}  iff $w:=\rho(u)\in E$  is a solution of the problem \begin{equation}\label{eq:Abstr Incl intro}Aw\in F(w),\;\; w\in K.
\end{equation}
\begin{Prop} The following conditions are satisfied:
\vspace{-4mm}
\begin{enumerate}
\item [$(a)$] $A$ is a resolvent compact linear densely defined $m$-accretive operator in $E$, $D(A)=H_0^1(\Omega,\R^M)\cap H^2(\Omega,\R^M)$;
    \item [$(b)$] $K$ is resolvent invariant, i.e., $J_\l^A(K)\subset K$ for any $\l>0$;
    \item [$(c)$] $F$ is $H$-usc with weakly compact convex values and weakly tangent to $K$.
\end{enumerate}
\end{Prop}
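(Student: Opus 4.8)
The plan is to verify the three conditions separately, using known facts about the Dirichlet Laplacian and Nemytskii operators.

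\medskip

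\noindent\textbf{(a) Properties of $A$.} I would start from the Lions--Lax--Milgram construction of $A$: the bilinear form $a(u,\phi)=\int_\Omega\sum_i\langle\nabla u_i,\nabla\phi_i\rangle\,\d x$ on $H^1_0(\Omega,\R^M)$ is symmetric, bounded and coercive (Poincaré), so its $L^2$-realization $A$ is self-adjoint, positive, with $D(A)=H^2\cap H^1_0$ by elliptic regularity on the smooth bounded $\Omega$; this is exactly the cited results in Grubb. Self-adjointness and positivity of $A$ give $\langle Au,u\rangle\ge0$, hence $A$ is accretive in the Hilbert-space sense, and since $\mathrm{Range}(I+A)=E$ (again Lax--Milgram applied to $a(u,\phi)+\langle u,\phi\rangle$), $A$ is $m$-accretive. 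Density of $D(A)$ is immediate since $C^\infty_0(\Omega,\R^M)\subset D(A)$. Resolvent compactness follows because $J^A_\lambda=(I+\lambda A)^{-1}$ maps $L^2$ boundedly into $D(A)\subset H^1_0(\Omega,\R^M)$, and the embedding $H^1_0(\Omega,\R^M)\hookrightarrow L^2(\Omega,\R^M)$ is compact by Rellich--Kondrachov (here boundedness of $\Omega$ is used).

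\medskip

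\noindent\textbf{(b) Resolvent invariance of $K$.} Fix $\lambda>0$, $g\in K$, and put $u=J^A_\lambda g$, i.e. $u+\lambda Au=g$. I would test this equation against $\phi=-u^-$ (the negative part taken componentwise, which lies in $H^1_0$), so that $a(u,u^-)=-\sum_i\int_\Omega|\nabla u_i^-|^2\,\d x\le0$ and $\langle u,u^-\rangle=-\|u^-\|^2$. Rewriting $\langle u,-u^-\rangle+\lambda a(u,-u^-)=\langle g,-u^-\rangle$ gives $\|u^-\|^2+\lambda\sum_i\int_\Omega|\nabla u_i^-|^2\,\d x=-\langle g,u^-\rangle\le0$ since $g\ge0$ a.e. and $u^-\ge0$ a.e. Hence $u^-=0$, i.e. $u\in K$. (Alternatively one may invoke that the heat semigroup $e^{-tA}$ is positivity preserving together with the Crandall--Liggett/exponential formula, but the direct variational argument is cleaner.)

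\medskip

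\noindent\textbf{(c) Properties of $F$.} Here I would use Assumption \ref{rd-system}. First, $\vp\circ\gamma\colon\R^M_+\multi\R^M$ is usc with compact convex values (composition of the homeomorphism $\gamma=\rho^{-1}$ with the usc $\vp$) and inherits a linear growth bound: by \eqref{fi-growth}, $\sup_{v\in\vp\circ\gamma(y)}|v|\le c(1+|\gamma(y)|)$, and since $|\rho(u)|\ge\alpha|u|$ we get $|\gamma(y)|\le\alpha^{-1}|y|$, so $\vp\circ\gamma$ has linear growth in $y$. This makes the Nemytskii operator $N_{\vp\circ\gamma}$ well-defined $L^2(\Omega,\R^M)\to$ (nonempty, closed, convex, bounded subsets of) $L^2(\Omega,\R^M)$; convexity and weak compactness of its values come from convexity and growth (Dunford--Pettis, since the growth bound gives uniform integrability after squaring). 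For $H$-upper semicontinuity (equivalently, by the comments on page~\pageref{set-val}, for weak usc with convex weakly compact values) I would argue by the standard sequential characterization: if $u_n\to u$ in $L^2$ and $v_n\in F(u_n)$, pass to a subsequence with $u_n\to u$ a.e.; the growth bound plus uniform integrability give $v_n\rightharpoonup v$ in $L^2$ along a further subsequence; then a Mazur/convexity argument combined with the a.e. upper semicontinuity and closedness of $\Gr(\vp\circ\gamma)$ shows $v(x)\in\vp\circ\gamma(u(x))$ a.e., hence $v\in F(u)$. Finally, weak tangency: given $u\in K$, use Assumption \ref{rd-system}(2) to pick, for a.a. $x$, $v(x)\in\vp(\gamma(u(x)))$ with $v_i(x)\ge0$ whenever $\gamma(u(x))_i=0$; by the invariance of faces under $\rho$ (Assumption \ref{rd-system}(4)), $\gamma(u(x))_i=0 \iff u_i(x)=0$, so $v_i(x)\ge0$ whenever $u_i(x)=0$, and a measurable selection theorem produces such a $v\in L^2$. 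Since $K$ is a closed convex cone (in fact a "box-type" cone), its tangent cone at $u$ is $T_K(u)=\{z\in E\mid z_i(x)\ge0 \text{ a.e. on } \{u_i=0\}\}$, so $v\in F(u)\cap T_K(u)$.

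\medskip

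\noindent\textbf{Main obstacle.} The routine parts are (a) and (b). The delicate point is (c): proving $F$ is $H$-usc with weakly compact convex values. The genuine difficulty — already flagged in the introduction — is that $N_{\vp\circ\gamma}$ is only weakly usc and never norm-compact-valued, so one must carefully combine the a.e. upper semicontinuity of $\vp\circ\gamma$, the growth bound yielding weak $L^2$-compactness and uniform integrability, and a convexity (Mazur) argument to pass the inclusion $v(x)\in\vp\circ\gamma(u(x))$ to the weak limit; getting the measurable tangent selection simultaneously measurable and $L^2$ requires the Kuratowski--Ryll-Nardzewski selection theorem applied to the closed-valued measurable multifunction $x\mapsto\vp(\gamma(u(x)))\cap\{v\mid v_i\ge0 \text{ for } i\in I(x)\}$, where $I(x)=\{i\mid u_i(x)=0\}$.
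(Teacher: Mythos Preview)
Your overall structure matches the paper's: part (a) via standard elliptic/semigroup theory and Rellich--Kondrachov, and for the weak tangency in (c) both you and the paper invoke the Kuratowski--Ryll-Nardzewski selection theorem together with the pointwise description $T_K(u)=\{v\in E\mid v(x)\in T_{\R^M_+}(u(x))\ \text{a.e.}\}$. For (b) the paper simply cites an external result, whereas you give the direct variational argument (test against the componentwise negative part); this is a perfectly good and more self-contained alternative.

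There is, however, a genuine gap in your treatment of the $H$-upper semicontinuity in (c). You claim that $H$-usc is ``equivalently'' weak usc for maps with convex weakly compact values, referring to the remarks collected in the preliminaries. Those remarks only record the implication $H$-usc $\Rightarrow$ weak usc (item (b)) and the sequential characterization of \emph{weak} usc (item (d)); the converse is not asserted and is false in general. Consequently your Mazur/weak-limit argument establishes only that $F$ is weakly usc, which is strictly weaker than what the proposition claims. To obtain $H$-usc you need a \emph{norm} estimate: given $u_n\to u$ in $L^2$ and any $v_n\in F(u_n)$, produce $w_n\in F(u)$ with $\|v_n-w_n\|_{L^2}\to 0$. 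The standard route (and the content of the lemma the paper cites) is to pass to an a.e.\ convergent, $L^2$-dominated subsequence, let $w_n(x)$ be the metric projection of $v_n(x)$ onto the compact convex set $\vp\circ\gamma(u(x))$, observe that $|v_n(x)-w_n(x)|=\dist\big(v_n(x),\vp\circ\gamma(u(x))\big)\to 0$ a.e.\ by the (pointwise) upper semicontinuity of $\vp\circ\gamma$, and conclude by dominated convergence using the linear growth bound. Your weak-convergence argument does not supply this, so the $H$-usc assertion remains unproved in your proposal.
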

\begin{proof} It is well-known that $-A$ generates a $C_0$ semigroup of contractions, hence $A$ is $m$-accretive.  The compactness of resolvents follow immediately from the compactness of the Sobolev embedding $H_0^1(\Omega,\R^M)\subset E$. The invariance follows from \cite[Proposition 4.3]{KS}.\\
\indent  It is evident that values of $F$ are convex and closed and bounded, hence, weakly compact. The $H$-upper semicontinuity of $F$ follows from assumptions \ref{rd-system} (1), (3) and  \cite[Lemma 4.2]{KS}.\\
\indent It is clear that if $u\in\R^M$, then $T_{\R^M_+}(u)=\{v\in\R^M\mid v_j\geq 0\;\; \text{if}\;\; u_j=0,\;j=1,...,M\}$.
Conditions (2) and (4) from assumption \ref{rd-system} imply that $\vp\circ\gamma(u)\cap T_{\R^M_+}(u)$ for any $u\in\R^M_+$. Let now $u\in K$. The map $T_{\R^M_+}(\cdot)\colon \R^M_+\multi\R^M$ is lower semicontinuous (cf. \cite[Th. 4.2.2]{A-F}), $\vp\circ\gamma$ is measurable; hence $\Omega\ni x\multi \vp\circ \gamma(u(x))\cap T_{\R^M_+}(u(x))\subset\R^M$ is measurable with nonempty values. By  the Kuratowski--Ryll-Nardzewski theorem, there is a measurable  $v\colon \Omega \to \R^M$ such that $v(x)\in \vp\circ\gamma\big(u(x)\big)\cap T_{\R^M_+}\big(u(x)\big)$ for a.e. $x\in \Omega$. Clearly $v\in E$ and $v\in T_K(u)\cap F(u)$ since in view of \cite[Cor. 8.5.2]{A-F} $T_K(u)=\big\{v\in E\mid v(x)\in T_{\R^M_+}\big(u(x)\big)\;\hbox{a.e. in}\;\Omega\big\}$.
\end{proof}

\begin{Ex}\label{ex:Nemytskii not usc} {\em Let $\varphi\colon\R_+\to\R$, $\varphi(u)=[u-1,u+1]$, $u\in\R_+$. For any $n\in\N$, $s_n\in N_\vp(0)$ (the Nemytskii operator associated to $\vp$), where $s_n(x)=\sin(nx)$, $x\in \Omega=(-\pi/2,\pi/2)$, but $(s_n)$ has no $L^2$-convergent subsequence. Moreover
$C=\big\{(1+1/n)s_n\mid n\in\N\big\}$ is closed in $L^2(\Omega)$ but $N_\varphi^{-1}(C)$ is not. This shows that neither $N_\vp$ is usc nor it has compact values.}
\end{Ex}

\subsection{Existence}\label{linearisation}  In what follows we are going to find conditions for the nontriviality of the difference $$\deg_K(A,F;U_0)-\deg(A,F,U_\infty),$$ where $U_0$ (resp. $U_\infty$)  is a `small' (resp. `large') ball around 0 and, thus, establishing the existence of nontrivial solutions to problem.

\noindent {\sc Linear perturbations:} Let $G\colon E\to E$ be the Nemytskii operator determined by a  matrix $D=[d_{ij}]_{i,j=1}^M$, i.e., $G(u)(x)=Du(x)$ for $u\in E$ and $x\in \Omega$. From \cite[Example 3.1]{Ka-K} it follows that $Du\in T_{\R^M_+}(u)$ for  $u\in\R^M_+$ iff $D$ is {\em quasi-nonnegative}, i.e., the off-diagonal entries $d_{ij}\geq 0$, $1\leq i\neq j\leq M$.\\
\indent We therefore assume that $D$ is quasi-nonnegative. Hence $G(u)\in T_K(u)$ for any $u\in K$.  Let $$\splus(D):=\{\lambda\in\R\mid\textrm{there exists}\; 0\neq u\in\R^M_+\textrm{ such that }Du=\lambda u\}\subset\sigma(D).$$
In view of \cite [Page 2241]{Ka-K} it immediately follows that  $\splus(D)$ is nonempty and $$s(D):=\max\Re\big(\sigma(D)\big)=\max\splus(D).$$
Let $\lambda_1$ be the first eigenvalue of the (scalar) Laplace operator $-\Delta$ on $\Omega$ with Dirichlet boundary condition and $\phi\geq 0$ be an eigenfunction related to $\lambda_1$. Then $\phi>0$ on $\Omega$ and $\lambda_1$ is simple, i.e., $\phi$ is unique up to a positive factor (cf., e.g.,  \cite{Evans} for details).

\begin{Prop}\label{war-spektrum} If  $\lambda_1\not\in\splus(D)$, then
$\ker(A-G)\cap K=\{0\}$. If, additionally, $s(D)>\lambda_1$, then there is $\bar u\in K$ such that
$(A-G)^{-1}(\bar u)\cap K=\emptyset$.
\end{Prop}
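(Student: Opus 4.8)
The plan is to test the PDE system against the positive first Dirichlet eigenfunction $\phi$ of the scalar Laplacian on $\Omega$, thereby reducing both claims to finite-dimensional statements about the quasi-nonnegative matrix $D$. Recall that $\phi>0$ on $\Omega$, $\phi\in H^2(\Omega)\cap H^1_0(\Omega)$, $-\Delta\phi=\lambda_1\phi$, and that $w\in D(A)=H^2(\Omega,\R^M)\cap H^1_0(\Omega,\R^M)$, so Green's identity applies freely. For the first claim, suppose $w\in\ker(A-G)\cap K$, i.e.\ $w\geq 0$ a.e.\ and $-\Delta w_i=\sum_{j=1}^M d_{ij}w_j$ a.e.\ for each $i$. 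Put $c_i:=\int_\Omega w_i\phi\,\d x\geq 0$ and $c:=(c_1,\dots,c_M)\in\R^M_+$. Multiplying the $i$-th equation by $\phi$, integrating, and using $\int_\Omega(-\Delta w_i)\phi\,\d x=\int_\Omega\nabla w_i\cdot\nabla\phi\,\d x=\lambda_1\int_\Omega w_i\phi\,\d x$ (boundary terms vanish since $w_i,\phi\in H^1_0$), one gets $Dc=\lambda_1 c$. Since $\lambda_1\notin\splus(D)$ and $c\in\R^M_+$, this forces $c=0$; because $\phi>0$ on $\Omega$ and $w_i\geq 0$, each $w_i=0$ a.e., i.e.\ $w=0$.

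For the second claim, set $\mu:=s(D)=\max\splus(D)>\lambda_1$. Applying the Perron--Frobenius facts cited from \cite{Ka-K} to the (still quasi-nonnegative) transpose $D^\top$, whose spectral bound equals $s(D)=\mu$, we obtain a left Perron eigenvector $\psi\in\R^M_+\setminus\{0\}$ with $D^\top\psi=\mu\psi$. Define $\bar u\in K$ by $\bar u(x):=\phi(x)\mathbf 1$ with $\mathbf 1=(1,\dots,1)$, so that $\bar u_i=\phi\geq 0$ and $\bar u\in E$. Assume, for contradiction, that some $w\in D(A)\cap K$ satisfies $(A-G)w=\bar u$, i.e.\ $-\Delta w_i-\sum_j d_{ij}w_j=\phi$ a.e.\ for each $i$. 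Testing against $\phi$ as before, with $c_i:=\int_\Omega w_i\phi\,\d x\geq 0$ and $\beta:=\int_\Omega\phi^2\,\d x>0$, yields $(\lambda_1 I-D)c=\beta\mathbf 1$. Pairing with $\psi$ gives $(\lambda_1-\mu)\,\psi^\top c=\psi^\top(\lambda_1 I-D)c=\beta\,\psi^\top\mathbf 1=\beta\sum_i\psi_i>0$, whereas the left-hand side is $\leq 0$ since $\lambda_1<\mu$ and $\psi^\top c\geq 0$ --- a contradiction. Hence $(A-G)^{-1}(\bar u)\cap K=\emptyset$.

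The PDE manipulations are routine; the crux is the linear algebra in the second part, namely locating the \emph{left} Perron eigenvector of the Metzler matrix $D$ (equivalently, of $D^\top$) and choosing the forcing $\bar u$ so that the resulting right-hand side $b$ satisfies $\psi^\top b>0$ \emph{without} any irreducibility hypothesis on $D$. This is exactly why I take $\bar u=\phi\mathbf 1$ (forcing $b=\beta\mathbf 1$ with $\psi^\top\mathbf 1=\sum_i\psi_i>0$) rather than the more obvious choice $\bar u=\phi v$ with $v$ a right Perron vector, for which $\psi^\top v$ may vanish when $D$ is reducible. One should also check that $\bar u=\phi\mathbf 1$ indeed lies in $K\cap E$ and that the regularity $D(A)\subset H^2$, $\phi\in H^2$ legitimizes every integration by parts --- both immediate.
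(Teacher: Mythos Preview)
Your proof is correct and follows a genuinely different route from the paper's. For the first claim, the paper decomposes $E=E_1\oplus E_1^\perp$ (with $E_1$ the $\lambda_1$-eigenspace of $A$), shows both summands are invariant under $A$ and $G$, checks that the $E_1$-component of any element of $K$ again lies in $K$, and then reduces to a pointwise eigenvalue equation on $E_1$. Your direct test of the full equation against $\phi$ bypasses this decomposition and reaches the finite-dimensional equation $Dc=\lambda_1 c$ in one stroke, which is shorter. For the second claim, the paper takes $\bar u=\phi v$ with $v\in\R^M_+\setminus\{0\}$ a \emph{right} eigenvector of $D$ at $\mu=s(D)$, derives $(\lambda_1 I-D)w=v$ for some $w\in\R^M_+$, and concludes via the observation that $v-(\lambda_1-\mu)w\in\ker(\lambda_1 I-D)\cap\R^M_+=\{0\}$, forcing $v=(\lambda_1-\mu)w\leq 0$, hence $v=0$. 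Your argument instead chooses $\bar u=\phi\mathbf 1$ and a \emph{left} Perron eigenvector $\psi$ of $D$, then pairs with $\psi$ to obtain a sign contradiction. Both approaches avoid any irreducibility hypothesis on $D$: the paper's because its lemma only uses $\ker(\lambda_1 I-D)\cap\R^M_+=\{0\}$, i.e.\ $\lambda_1\notin\splus(D)$; yours because $\psi^\top\mathbf 1=\sum_i\psi_i>0$ automatically. Your route is slightly more direct; the paper's has the minor advantage of explicitly recycling the hypothesis $\lambda_1\notin\splus(D)$ inside the second argument, so the two assertions visibly share the same finite-dimensional obstruction.
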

\begin{proof} {\em Step 1}: Let $E_1:=\{u\in E\mid Au=\l_1u\}$ be the eigenspace corresponding to $\l_1$ and let $E_1^\bot$ be the orthogonal complement of $E_1$. Clearly both $E_1$ and $E_1^\bot$ are invariant with respect to $A$; a direct computation shows also that so they are with respect to $G$. Let us collect some other properties:\\
\indent (1) $E_1^\bot\cap K=\{0\}$. For if $u\in E_1^\bot\cap K$ then for every $j=1,\ldots,M$ we have $\int_\Omega u_j\phi \d x=\la u,\phi e_j\ra_E=0$, where $e_j\in\R^M$ is the $j$-th vector from the standard basis of $\R^M$. Since the integrand is nonnegative, thus  it is zero almost everywhere. Therefore $u_j=0$, as $\phi$ is strictly positive on $\Omega$.\\
\indent (2) If $u=u^1+u^\bot\in K$, where $u^1\in E_1$, $u^\bot\in E_1^\bot$, then $u^1\in K$. To see this we show that for $j=1,\ldots,M$,
$0\leq \la u,\phi e_j\ra_E=\la u^1,\phi e_j\ra=\int_\Omega u_j^1\phi\d x$. Now, since $u^1_j=\alpha_j\phi$ for some $\alpha_j\in\R$, we get $\alpha_j\geq 0$, which yields $u^1_i\geq 0$, and consequently $u^1\in K$.\\
\indent (3) If $\lambda_1\not\in\splus(D)$, then $\ker(A-G)\cap K\cap E_1=\{0\}$. Indeed : let $u\in \ker(A-G)\cap K\cap E_1$. Therefore $G(u)=Au=\lambda_1 u$. If $u\not\equiv 0$ then there is $x\in \Omega$ such that $u(x)$ is a nonnegative nonzero vector in $\R^M$ satisfying the equality $Du(x)=\lambda_1u(x)$ and therefore $\lambda_1\in\splus(D)$. The contradiction shows our assertion.

\noindent {\em Step 2}: If $C$ is an $M\times M$ square matrix, $\ker C\cap\R^m_+=\{0\}$, $v, w\in \R^M_+$, $Cv=\mu v$ with $\mu<0$ and $Cw=v$, then $C(\mu w)=\mu v$. Hence $v-\mu w\in\ker C$. Consequently $v=\mu w$. This implies that  $v=w=0$.

\noindent {\em Step 3}: Let $u=u^1 + u^\bot\in \ker(A-G)\cap K$, where $u^1\in E_1$, $u^\bot\in E_1^\bot$. The invariance of $E_1$ and $E_1^\bot$ under $A$ and $G$ yields $u^1\in \ker(A-G)$. From (2) above it follows that $u^1\in K$, therefore, by (3),  $u^1=0$. Finally, by (1) in Step 1, $u^\bot\in E_1^\bot\cap K=\{0\}$. This proves the first part of Proposition \ref{war-spektrum}.

\noindent {\em Step 4}: Let $\mu:=s(D)>\lambda_1$, let $0\neq v\in \R^M_+$ be an eigenvector of $D$ corresponding to $\mu$ and $\bar u=\phi v$. Then $\bar u\in K$. Suppose  $(A-G)u=\bar u$ for some $u=u^1+u^\bot\in K$.  The invariance of $E_1$ and $E_1^\bot$ under $A$ and $G$ shows that $(A-G)u^1=\bar u$, where $u^1\in K$ as in (2) above. Since $\l_1$ ia a simple eigenvalue, there is $w\in\R^M_+$ such that $u^1=\phi w$. Moreover $Au^1=\lambda_1\phi w$ and $G(u^1)=\phi Dw$. Since $\phi>0$ on $\Omega$, this implies that $(\lambda_1 I-D)w=v$.
Applying Step 2 to $C:=\lambda_1 I-D$ and $\l:= \l_1-\mu$ we see that $w=v=0$. This contradicts that $v\neq 0$.
\end{proof}

\begin{Th}\label{tw:wzordla_minusA+D_01} Let $D$, $G$ be as above and let $\lambda_1\not\in\splus(D)$. Then $Au=G(u)$, $u\in K$ iff $u=0$ and for any $r>0$
\begin{equation}
\deg_K(A,G,B_K(0,r))=\begin{cases}1&s(D)<\lambda_1\\0&s(D)>\lambda_1,\end{cases}
\end{equation}
where $B_K(0,r):=B(0,r)\cap K$.
\end{Th}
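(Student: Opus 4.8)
The first assertion is immediate: if $u\in K$ satisfies $Au=G(u)$ then $u\in\ker(A-G)\cap K$, which is $\{0\}$ by the first part of Proposition~\ref{war-spektrum} (this uses only $\lambda_1\notin\splus(D)$), while conversely $A0=G(0)=0$. Consequently $\Coin(A,G;B_K(0,r))=\{0\}$ is compact; since moreover $G$ is continuous single-valued --- hence $H$-usc with compact convex values --- and weakly tangent to the closed convex cone $K$ (because $D$ is quasi-nonnegative), the pair $(A,G)$ is admissible and $\deg_K(A,G;B_K(0,r))$ is well defined. I would also record once and for all that, by the Poincar\'e inequality, $\la Au,u\ra\geq\lambda_1\|u\|^2$ for $u\in D(A)$, so $A-\tfrac{\lambda_1}{2}I$ is selfadjoint and nonnegative; hence $A$ is $\omega$-$m$-accretive with $\omega=-\lambda_1/2<0$, it is linear, densely defined, resolvent compact, $K$ is resolvent invariant and $0\in A0$.

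For $s(D)<\lambda_1$ the plan is to deform $G$ to the zero map. I would use the homotopy $\Phi(u,t):=tG(u)$ on $B_K(0,r)\times[0,1]$ with the constant (resolvent-compact) family $A(t)\equiv A$; it is jointly continuous and $tG(u)\in T_K(u)$, since $T_K(u)$ is a convex cone containing both $0$ and $G(u)$. The only nontrivial point for admissibility (Assumption~\ref{assadhom}) is that $\widetilde C:=\bigcup_{t\in[0,1]}\Coin(A,tG;B_K(0,r))$ be compact; here $\splus(tD)=t\,\splus(D)$ for $t\in(0,1]$, so $\max\splus(tD)=t\,s(D)\leq s(D)<\lambda_1$, while $\splus(0)=\{0\}$, whence $\lambda_1\notin\splus(tD)$ for every $t\in[0,1]$ and Proposition~\ref{war-spektrum} gives $\Coin(A,tG;B_K(0,r))=\ker(A-tG)\cap K=\{0\}$. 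Thus $\widetilde C=\{0\}$, and homotopy invariance (Theorem~\ref{main1}(3)) yields $\deg_K(A,G;B_K(0,r))=\deg_K(A,0;B_K(0,r))$; the latter equals $1$ by Corollary~\ref{loc-norm} applied with $u_0=0$ and $U=B_K(0,r)\ni 0$ (using $\omega<0$ from the first paragraph).

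For $s(D)>\lambda_1$ the plan is to deform $G$ to $G+\bar u$, where $\bar u:=\phi v$ and $0\neq v\in\R^M_+$ is an eigenvector of $D$ for the eigenvalue $\mu:=s(D)=\max\splus(D)>\lambda_1$; note $\bar u\in K\setminus\{0\}$. Take $\Phi(u,t):=G(u)+t\bar u$ on $B_K(0,r)\times[0,1]$, again with $A(t)\equiv A$. As $\bar u(x)=\phi(x)v$ is a nonnegative vector for each $x$, it lies in $T_{\R^M_+}(w)$ for every $w\in\R^M_+$, so $\bar u\in T_K(u)$ and hence $G(u)+t\bar u\in T_K(u)$ for all $u\in K$. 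For admissibility I would note that for $t\in(0,1]$ a coincidence point $u\in K\cap D(A)$ with $Au=G(u)+t\bar u$ would give, by linearity of $A-G$, the identity $(A-G)(t^{-1}u)=\bar u$ with $t^{-1}u\in K$, contradicting the second part of Proposition~\ref{war-spektrum}; thus $\Coin(A,G+t\bar u;B_K(0,r))=\emptyset$ for $t>0$ and $=\{0\}$ for $t=0$, so $\widetilde C=\{0\}$. Then Theorem~\ref{main1}(3) gives $\deg_K(A,G;B_K(0,r))=\deg_K(A,G+\bar u;B_K(0,r))$, and since $\Coin(A,G+\bar u;B_K(0,r))=\emptyset$, the existence property (Theorem~\ref{main1}(1)) forces this degree to be $0$.

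The entire argument is a manipulation of the axioms of $\deg_K$ resting on Proposition~\ref{war-spektrum}; there is no analytic obstacle. The only steps requiring care are the admissibility checks, i.e.\ confirming that each homotopy's union of coincidence sets stays compact --- and in both cases this collapses conveniently to the single point $\{0\}$: via the scaling identity $\splus(tD)=t\,\splus(D)$ when $s(D)<\lambda_1$, and via linearity of $A-G$ together with the emptiness of $(A-G)^{-1}(\bar u)\cap K$ when $s(D)>\lambda_1$.
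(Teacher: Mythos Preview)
Your argument is correct and follows the same overall strategy as the paper: the same two affine homotopies $tG$ and $G+t\bar u$, the same appeal to Proposition~\ref{war-spektrum}, and the same conclusion via homotopy invariance and the existence axiom. The only substantive difference is in how you verify $\widetilde C=\{0\}$ when $s(D)<\lambda_1$. The paper argues by an energy inequality, writing $\lambda_1\|u\|^2\leq\la Au,u\ra_E=t\la G(u),u\ra_E\leq t\,s(D)\|u\|^2$; you instead observe $\splus(tD)=t\,\splus(D)$ and invoke Proposition~\ref{war-spektrum} for each $tD$. Your route is arguably safer: the pointwise inequality $\la Dw,w\ra\leq s(D)|w|^2$ for $w\in\R^M_+$ that underlies the paper's estimate is not obvious for a general (non-symmetric) quasi-nonnegative matrix, whereas your scaling argument needs only the definition of $\splus$. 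After that, the paper packages the normalization step through Proposition~\ref{LS1}(b), while you unpack it and cite Corollary~\ref{loc-norm}; these are equivalent.

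One cosmetic slip: the chain $t\,s(D)\leq s(D)<\lambda_1$ is only valid when $s(D)\geq 0$. The conclusion you actually need, $t\,s(D)<\lambda_1$ for $t\in[0,1]$, holds in either case (if $s(D)<0$ then $t\,s(D)\leq 0<\lambda_1$), so nothing is lost.
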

\begin{proof} The first part follows directly from Proposition \ref{war-spektrum}. Let $s(D)<\lambda_1$. Then $C=\{u\in K\cap D(A)\mid Au=tG(u)\; \text{for some}\; t\in [0,1]\}=\{0\}$ since if $u\in C$ and $u\neq 0$, then $\l_1\|u\|^2\leq \la Au,u\ra_E=t\la G(u),u\ra_E\leq t s(D)\|u\|^2$, which is impossible. Now $1=\deg_K(A,G;K)=\deg_K(A,G,B_K(0,r))$ in view of Proposition \ref{LS1} (b).\\
\indent Let now $s(D)>\lambda_1$. From Proposition \ref{war-spektrum} it follows that there is exists $\bar u\in K$ such that the $\{u\in K\cap D(A)\mid Au=G(u)+t\bar u,\; t\in [0,1]\}=\{0\}$. By the homotopy invariance $\deg_K(A,G;B_K(0,r))=\deg_K(A,G+\bar u,B_K(0,r))$. However there are no $u\in K\cap D(A)$ with $Au=G(u)+\bar u$. Hence   $\deg_K(A,G+\bar u,B_K(0,r))=0$.
\end{proof}
\noindent {\sc Linearization:} Now we generalize results from the previous section to the case of a nonlinear perturbation $\varphi$.
\begin{Ass}\label{linass}{\em In addition to assumption \ref{rd-system} we suppose that
\vspace{-4mm}
\begin{enumerate}
\item[($F_0$)] there is an $M\times M$ matrix $D_0$ such that $\vp(u)=D_0u+\vp_0(u)$ for $u\in R^M_+$, where $\vp_0(0)=\{0\}$ and
    \begin{equation}\label{zal 0}\lim_{u\to 0}\frac{\sup\{\|v\|\mid v\in\vp_0(u)\}}{|u|}=0.\end{equation}
\item[($F_\infty$)] there is  an $M\times M$ matrix $D_\infty$ such that $\vp(u)=D_\infty u+\vp_\infty(u)$ for $u\in R^M_+$, where
    $$\lim_{|u|\to \infty}\frac{\sup\{\|v\|\mid v\in \vp_\infty(u)\}}{|u|}=0.$$
\item[($R$)]  there are $M\times M$ matrices $R_0$ and $R_\infty$ such that $\rho(u)=R_0u+\rho_0(u)$, $\rho(u)=R_\infty u+\rho_\infty(u)$ for $u\in\R^M_+$, where
$$\lim_{u\to 0}\frac{\rho_0(u)}{|u|}=0,\ \ \lim_{|u|\to \infty}\frac{\rho_\infty(u)}{|u|}=0.$$
\end{enumerate}
}\end{Ass}

\begin{Rem}{\em (1) Observe that if $\varphi$ is single-valued then assumption ($F_0$) (resp. ($F_\infty$) means that $\varphi$ is differentiable at $0$ (resp. at infinity) with respect to the cone $\R^M_+$ and and $\varphi'(0)=D_0$.\\
\indent (2) Condition \eqref{fi-growth} together with ($F_0$) imply that there is $M>0$ such that
\begin{equation}\label{fi-growth 1}\sup_{v\in\varphi(u)}|v|\leq M|u|.\end{equation}
\indent (3) For all $u\in\R^m_+$ we have $D_0u,D_\infty u\in T_{\R^m_+}(u)$.
 Indeed, let $u\in\R^m_+$. Then $\varphi(tu)=tD_0u+\vp_0(tu)$ for all $t>0$.  Since $t^{-1}\vp(tu)\cap T_{\R^M_+}(u)\neq \emptyset$, passing with $t\to 0$ we obtain that $D_0u\in C_{\R^m_+}(u)$. The tangency of $D_\infty$ can be proved in a similar manner.\\
\indent (4) The growth condition  $|\rho(u)|\geq\alpha|u|$ shows that matrices $R_0$, $R_\infty$ are invertible and:
\begin{equation}\label{gamma}\lim_{u\to 0}\frac{\gamma(u)-R_0^{-1}u}{|u|}=0,\ \ \lim_{|u|\to \infty}\frac{\gamma(u)-R_\infty^{-1}u}{|u|}=0.\end{equation}
}\end{Rem}
Let $\Phi\colon K\multi E$ be the Nemytskii operator induced by $\vp$, i.e., $\Phi(u)=\{v\in E\mid v(x)\in\vp(u(x))\;\;\text{for a.a.}\; x\in\Omega\}$ for $u\in K$. Assumption \ref{linass} implies that $\Phi$ admits linearizations in the sense of Hadamard (\footnote{Simple examples show that the Nemytskii operator $N_\vp$ associated to $\vp$ (i.e., $N_\vp(u)(x)=\vp(u(x))$ for $u\in K$) admits neither the linearization at 0 nor at $\infty$, i.e, it is not necessarily true that
$\|u_n\|^{-1}\|w_n-N_{D_0}(u_n)\|\to 0$ if $u_n\to 0$ and $w_n\in N_\varphi(u_n)$; see \cite{Stuart}.}). Namely it is easy to see that
\begin{Prop}\label{Hadamard} For any sequences $(s_n)$, $s_n\to 0^+$ (resp. $s_n\to\infty$), $(u_n)$ in $K$, $u_n\to u\in K$ and $v_n\in \Phi(s_nu_n)$, one has that
$s_n^{-1}v_n\to G_0(u)$ (resp. $s_n^{-1}v_n\to G_\infty(u)$), where $G_0$ (resp. $G_\infty$) is the Nemytskii of $D_0$ (resp. $D_\infty$).
Consequently maps $H_0, H_\infty\colon K\times [0,1]\multi E$, given for $u\in K$ by
\begin{equation} \label{homot}
H_0(u,t):= \begin{cases}t^{-1}\Phi(tu),& t\in (0,1] \\
\big\{G_0(u)\big\}, & t=0, \end{cases}\quad\quad\quad
H_\infty(u,t):= \begin{cases}t\Phi(t^{-1}u),& t\in (0,1] \\
\big\{G_\infty(u)\big\}, & t=0, \end{cases}
\end{equation}
are $H$-usc with weakly compact convex values.
\end{Prop}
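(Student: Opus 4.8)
The whole statement hinges on the displayed convergence; granting it, the assertions on $H_0$ and $H_\infty$ come out by routine arguments, so the plan is to establish that convergence first, in the form at $0$ (the form at $\infty$ being entirely parallel). I would fix sequences $s_n\to 0^+$, $u_n\to u$ in $K$, $v_n\in\Phi(s_nu_n)$, write $G_0$ for the Nemytskii operator of $D_0$, and set $w_n:=v_n-s_nG_0(u_n)\in E$. Since $v_n(x)\in\vp(s_nu_n(x))=D_0\big(s_nu_n(x)\big)+\vp_0\big(s_nu_n(x)\big)$ a.e., one has $w_n(x)\in\vp_0\big(s_nu_n(x)\big)$ a.e. and $s_n^{-1}v_n=G_0(u_n)+s_n^{-1}w_n$; because $D_0$ is bounded and $u_n\to u$ in $E$, $G_0(u_n)\to G_0(u)$ in $E$, so everything reduces to showing $s_n^{-1}w_n\to 0$ in $E$. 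From \eqref{fi-growth 1} and the boundedness of $D_0$ one gets a constant $M'$ independent of $n$ with $|w_n(x)|\le M's_n|u_n(x)|$ a.e., hence $|s_n^{-1}w_n(x)|\le M'|u_n(x)|$ a.e.

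To prove $s_n^{-1}w_n\to 0$ in $E$ I would argue by contradiction. If not, then along a subsequence $\|s_n^{-1}w_n\|\ge\delta>0$, and along a further subsequence $u_n(x)\to u(x)$ a.e. and $|u_n(x)|\le g(x)$ a.e. for some $g\in L^2(\Omega)$. Fix such an $x$: if $u(x)=0$ the bound $|s_n^{-1}w_n(x)|\le M'|u_n(x)|\to 0$ suffices; if $u(x)\ne 0$ then $u_n(x)\ne 0$ for large $n$, $s_nu_n(x)\to 0$ with $s_nu_n(x)\ne 0$, and since $w_n(x)\in\vp_0(s_nu_n(x))$,
\[|s_n^{-1}w_n(x)|\le\frac{\sup\{\|v\|\mid v\in\vp_0(s_nu_n(x))\}}{|s_nu_n(x)|}\,|u_n(x)|\longrightarrow 0\]
by \eqref{zal 0}. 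Hence $s_n^{-1}w_n(x)\to 0$ a.e., and as $|s_n^{-1}w_n(x)|\le M'g(x)\in L^2(\Omega)$, dominated convergence gives $s_n^{-1}w_n\to 0$ in $E$, a contradiction; therefore $s_n^{-1}v_n\to G_0(u)$. The case at $\infty$ runs the same way with $s_n\to\infty$ and $\vp_\infty$ in place of $\vp_0$, the only change being that on $\{x\mid u(x)\ne 0\}$ one now has $|s_nu_n(x)|\to\infty$ and uses the growth of $\vp_\infty$ at infinity instead of \eqref{zal 0}, while on $\{x\mid u(x)=0\}$ the estimate $|s_n^{-1}w_n(x)|\le M''|u_n(x)|\to 0$ (with $M''=M+\|D_\infty\|$) is used as before.

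For the remaining claims I would note that every $\Phi(y)$ is convex (so are the values of $\vp$), bounded in $E$ (by \eqref{fi-growth} and $|\Omega|<\infty$) and closed in $E$ (an $L^2$-limit has an a.e.-convergent subsequence and the values of $\vp$ are closed), hence weakly compact; thus $H_0(u,t)=t^{-1}\Phi(tu)$ and $H_\infty(u,t)=t\Phi(t^{-1}u)$ have convex weakly compact values for $t\in(0,1]$, and $\{G_0(u)\}$, $\{G_\infty(u)\}$ trivially so. On $K\times(0,1]$ these maps are $H$-usc: $\Phi=N_\vp$ is $H$-usc by Assumption \ref{rd-system}(1) and \cite[Lemma 4.2]{KS} (just as for $F$ before Example \ref{ex:Nemytskii not usc}), $\Phi$ is bounded on bounded sets, and $H_0$, $H_\infty$ are obtained from $\Phi$ by precomposition with the continuous maps $(u,t)\mapsto tu$, resp. $(u,t)\mapsto t^{-1}u$, into $K$ and by rescaling with the continuous scalar $t^{-1}$, resp. $t$ --- operations that preserve $H$-upper semicontinuity for locally bounded maps. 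Finally, $H$-upper semicontinuity at a point $(u,0)$ would be checked by contradiction: if it failed, there would be $\eps>0$, $(u_n,t_n)\to(u,0)$ with $t_n>0$ (the case $t_n=0$ being covered by continuity of $G_0$) and $v_n\in\Phi(t_nu_n)$ with $\|t_n^{-1}v_n-G_0(u)\|\ge\eps$, against the convergence established above; for $H_\infty$ one takes $s_n:=t_n^{-1}\to\infty$.

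The step I expect to be the main obstacle is exactly the strong $L^2$-convergence $s_n^{-1}w_n\to 0$: this is where the Hadamard (as opposed to Fr\'echet) character of the differentiability built into Assumption \ref{linass} is essential, and it requires marrying the pointwise sublinearity bounds (\eqref{zal 0}, resp. the growth of $\vp_\infty$) with a dominated-convergence argument run along subsequences --- with the additional care, in the case at infinity, that $s_nu_n(x)$ need not tend to infinity on the zero set of the limit $u$.
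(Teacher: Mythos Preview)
Your proof is correct and follows essentially the same route as the paper: pass to a subsequence with a.e.\ convergence and an $L^2$-dominant, split pointwise into the cases $u(x)=0$ and $u(x)\neq 0$, use the bound from \eqref{fi-growth 1} for the former and the sublinearity hypothesis \eqref{zal 0} (resp.\ the condition at infinity) for the latter, and conclude by dominated convergence. Your contradiction framing makes the subsequence reduction explicit where the paper simply writes ``we may assume'', and you also supply the routine verification of the $H$-usc claim for $H_0$, $H_\infty$ that the paper omits.
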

\begin{proof} Suppose $s_n\searrow 0$, $u_n\to u$ in $K$ and $v_n\in|phi(s_nu_n)$. We may assume that $u_n\to u$ a.e. and $|u_n|\leq \bar u\in L^2(\Omega)$ a.e. on $\Omega$. Let $z_n:=s_n^{-1}v_n$. By \eqref{fi-growth 1}, $|z_n|\leq M|u_n|\leq \bar u$ a.e. Take $x\in\Omega$ from the full measure set. If $u(x)\neq 0$, then for large $n\in\N$,
$$z_n(x)-D_0u_n(x)\in\big|u_n(x)\big|\frac{\vp_0\big(s_nu_n(x)\big)}{\big|s_nu_n(x)\big|}.$$
Thus $z_n(x)\to D_0u(x)$. If $u(x)=0$, then $|z_n(x)|\leq M|u_n(x)|\to 0$. Therefore the assertion follows in view of the Lebesgue dominated convergence theorem. The proof of the second part is analogous. \end{proof}
\begin{Rem}{\em In view of Assumption \ref{linass} (R) maps $\widetilde H_0,\widetilde H_\infty\colon K\times [0,1]\to E$
given for $u\in K$ by
\begin{equation*}
\widetilde H_0(u,t):= \begin{cases}t^{-1}\Gamma(tu),& t\in (0,1] \\
\big\{\Gamma_0(u)\big\}, & t=0, \end{cases}\quad\quad\quad \widetilde H_\infty(u,t):= \begin{cases}t\Gamma(t^{-1}u),& t\in (0,1] \\
\big\{\Gamma_\infty(u)\big\}, & t=0, \end{cases}
\end{equation*}
where $\Gamma$, $\Gamma_0$ and $\Gamma_\infty$ are Nemytskii operators induced by $\gamma$, $R_0^{-1}$ and $R_\infty^{-1}$, respectively, are continuous.
}\end{Rem}

\begin{Th}\label{tw:wzor-deg_diff0}  (i) Under Assumption \ref{rd-system} and \ref{linass} suppose that $\lambda_1\not\in\splus(D_0\circ{R_0^{-1}})$. Then there is $r>0$ such that $0$ is the unique solution to $Au\in F(u)$ in $B_K(0,\delta)$ and
$$\deg_K(A,F;B_K(0,r))=\begin{cases}1& \mathrm{if}\;\; s(D_\infty\circ{R_\infty^{-1}})<\lambda_1 \\
0&\mathrm{if}\;\;  s(D_\infty\circ{R_\infty^{-1}})>\lambda_1. \end{cases}$$
\indent (ii) If $\lambda_1\not\in\splus(D_\infty\circ{R_\infty^{-1}})$, then there is $R>0$ such if $u\in K\cap D(A)$ and $A(u)\in F(u)$, then $\|u\|<R$ and
$$\deg_K(A,F;B_K(0,R))=\begin{cases}1& \mathrm{if}\;\; s(D_\infty\circ{R_\infty^{-1}})<\lambda_1 \\
0&\mathrm{if}\;\;  s(D_\infty\circ{R_\infty^{-1}})>\lambda_1. \end{cases}$$
\end{Th}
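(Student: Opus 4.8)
The plan is to deduce both statements from the linear Theorem~\ref{tw:wzordla_minusA+D_01} by linearising $F$ near $0$ (resp.\ near $\infty$) and invoking homotopy invariance of $\deg_K$; I describe part~(i), part~(ii) being its ``at infinity'' counterpart. Put $L_0:=D_0\circ R_0^{-1}$. First I would check that $L_0$ is quasi-nonnegative: for $w\in\R^M_+$ we have $R_0^{-1}w\in\R^M_+$, and since by Assumption~\ref{rd-system}(4) every coordinate face of $\R^M_+$ is $\rho$-invariant, hence $\gamma$-invariant, hence (passing to the linearisation) $R_0^{-1}$-invariant, the implication $w_i=0\Rightarrow(R_0^{-1}w)_i=0$ holds; thus $T_{\R^M_+}(R_0^{-1}w)\subseteq T_{\R^M_+}(w)$, and combining with $D_0z\in T_{\R^M_+}(z)$ at $z=R_0^{-1}w$ (the Remark after Assumption~\ref{linass}) gives $L_0w\in T_{\R^M_+}(w)$; by \cite[Example~3.1]{Ka-K} $L_0$ is quasi-nonnegative. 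Its Nemytskii operator is $G_0\circ\Gamma_0$ in the notation of Proposition~\ref{Hadamard} and the Remark following it, so Theorem~\ref{tw:wzordla_minusA+D_01} applied with $D=L_0$ gives $\ker(A-G_0\Gamma_0)\cap K=\{0\}$ together with the stated alternative for $\deg_K(A,G_0\Gamma_0;B_K(0,r))$.

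Second, I would prove local uniqueness near $0$. Suppose, for contradiction, that there are $u_n\in(K\cap D(A))\setminus\{0\}$ with $u_n\to0$ and $w_n\in F(u_n)$, $Au_n=w_n$. Put $s_n:=\|u_n\|$, $v_n:=u_n/s_n\in K$. The bound $\sup_{v\in\varphi(u)}|v|\le M|u|$ (see~\eqref{fi-growth 1}) and $|\gamma(y)|\le\alpha^{-1}|y|$ (from $|\rho(u)|\ge\alpha|u|$) show that $(Av_n)=(s_n^{-1}w_n)$ is bounded in $E$; by resolvent compactness of $A$, a subsequence satisfies $v_n\to v_0\in K$ with $\|v_0\|=1$. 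Now $w_n\in F(u_n)=\Phi(\Gamma(u_n))=\Phi\big(s_n\,\widetilde H_0(v_n,s_n)\big)$, where $\widetilde H_0$ is the continuous map of the Remark after Proposition~\ref{Hadamard} and $\widetilde H_0(v_n,s_n)\to\widetilde H_0(v_0,0)=\Gamma_0(v_0)$ in $K$; hence Proposition~\ref{Hadamard} yields $Av_n=s_n^{-1}w_n\to G_0(\Gamma_0(v_0))$. Since $A$ is closed, $v_0\in D(A)$ and $Av_0=G_0(\Gamma_0(v_0))$, i.e.\ $v_0\in\ker(A-G_0\Gamma_0)\cap K=\{0\}$, contradicting $\|v_0\|=1$. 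So there is $r>0$ for which $0$ is the only solution of $Au\in F(u)$ in $B_K(0,r)$; in particular $\Coin(A,F;B_K(0,r))=\{0\}$, so $\deg_K(A,F;B_K(0,r))$ is well defined.

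Third, I would run the linearising homotopy $\mathcal H_0\colon B_K(0,r)\times[0,1]\multi E$, $\mathcal H_0(u,t):=H_0\big(\widetilde H_0(u,t),t\big)$; explicitly $\mathcal H_0(u,t)=t^{-1}F(tu)$ for $t\in(0,1]$, $\mathcal H_0(u,0)=G_0(\Gamma_0(u))$, and $\mathcal H_0(\cdot,1)=F$. Since $H_0$ is $H$-usc with weakly compact convex values (Proposition~\ref{Hadamard}) and $\widetilde H_0\colon K\times[0,1]\to K$ is continuous, $\mathcal H_0$ is $H$-usc with weakly compact convex values, and it is locally bounded by the linear growth of $\varphi$ and $|\gamma(y)|\le\alpha^{-1}|y|$. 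It is weakly tangent: $K$ being a closed convex cone, $T_K(tu)=T_K(u)=t^{-1}T_K(u)$ for $t>0$, so $\mathcal H_0(u,t)\cap T_K(u)\supseteq t^{-1}\big(F(tu)\cap T_K(tu)\big)\neq\emptyset$, while for $t=0$ one has $G_0(\Gamma_0(u))\in T_K(u)$ because $L_0$ is quasi-nonnegative (pointwise description of $T_K$, \cite[Cor.~8.5.2]{A-F}). Moreover $Au\in\mathcal H_0(u,t)$ with $t\in(0,1]$ forces $tu$ to solve $Aw\in F(w)$, so $\|tu\|\le\|u\|<r$ and hence $tu=0$, $u=0$; and $Au\in\mathcal H_0(u,0)$ gives $u\in\ker(A-G_0\Gamma_0)\cap K=\{0\}$. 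Thus $\bigcup_{t\in[0,1]}\Coin(A,\mathcal H_0(\cdot,t);B_K(0,r))=\{0\}$, so $(\{A\},\mathcal H_0)$ is an admissible homotopy in the sense of Assumption~\ref{assadhom} (recall $E^*=E$ is uniformly convex). Homotopy invariance (Theorem~\ref{main1}(3)) gives $\deg_K(A,F;B_K(0,r))=\deg_K(A,G_0\Gamma_0;B_K(0,r))$, and the alternative from Theorem~\ref{tw:wzordla_minusA+D_01} finishes~(i).

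For part~(ii) I would argue identically with $L_\infty:=D_\infty\circ R_\infty^{-1}$ and the homotopy $\mathcal H_\infty(u,t):=H_\infty\big(\widetilde H_\infty(u,t),t\big)$, i.e.\ $\mathcal H_\infty(u,t)=tF(t^{-1}u)$ for $t\in(0,1]$ and $\mathcal H_\infty(u,0)=G_\infty(\Gamma_\infty(u))$. The a priori bound $\|u\|<R$ for all solutions of $Au\in F(u)$ is obtained exactly as the local uniqueness above, normalising a hypothetical unbounded sequence of solutions and using Proposition~\ref{Hadamard} at infinity, resolvent compactness, and $\ker(A-G_\infty\Gamma_\infty)\cap K=\{0\}$ (from $\lambda_1\notin\splus(L_\infty)$). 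Since $Au\in\mathcal H_\infty(u,t)$ with $t\in(0,1]$ forces $t^{-1}u$ to solve $Aw\in F(w)$, one gets $\|u\|\le\|t^{-1}u\|<R$, so $\mathcal H_\infty$ is an admissible homotopy on $B_K(0,R)$ joining $F$ to $G_\infty\Gamma_\infty$, and Theorem~\ref{tw:wzordla_minusA+D_01} concludes. I expect the main technical point to be verifying that $\mathcal H_0$ and $\mathcal H_\infty$ are $H$-usc with weakly compact convex values up to the degenerate value $t=0$: this is precisely where the linearisations of $\varphi$ and of $\rho$ must be composed correctly, which is why it is convenient to factor $\mathcal H_0=H_0\circ(\widetilde H_0,\id)$ and lean on Proposition~\ref{Hadamard} and the continuity of $\widetilde H_0$; a secondary issue is keeping the whole construction inside the cone $K$ (weak tangency and the face-invariance of $\rho$) so that Theorem~\ref{tw:wzordla_minusA+D_01} applies verbatim.
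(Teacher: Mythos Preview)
Your proposal is correct and follows essentially the same strategy as the paper: define the linearising homotopy $\mathcal H_0(u,t)=H_0(\widetilde H_0(u,t),t)$ (which is precisely the paper's $H$), obtain local uniqueness at $0$ by normalising a hypothetical sequence of nontrivial solutions and invoking Proposition~\ref{Hadamard} together with resolvent compactness and $\ker(A-G_0\Gamma_0)\cap K=\{0\}$, then apply homotopy invariance and Theorem~\ref{tw:wzordla_minusA+D_01}. Your write-up is in fact more careful than the paper's in several places: you explicitly verify that $L_0=D_0R_0^{-1}$ is quasi-nonnegative (needed to invoke Theorem~\ref{tw:wzordla_minusA+D_01}), you check weak tangency of $\mathcal H_0$, and you cleanly reduce the coincidence set of the whole homotopy to that of $F$ via the observation $Au\in t^{-1}F(tu)\iff A(tu)\in F(tu)$, whereas the paper states the uniqueness claim for all $t$ but only argues the case $t=1$.
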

\begin{proof}
(i) Define $H\colon K\times [0,1]\multi E$ by $H(u,t):=H_0\circ \widetilde H_0(u,t)$, $u\in K$, $t\in [0,1]$, where $H_0$ and $\Gamma_0$ are given by \eqref{homot}. It is clear that $H$ is well defined, $H$-usc with convex weakly compact values. We shall show that there is $r>0$ such that $0$ is the unique solution of $Au\in H(u,t)$ in $B_K(0,r)$.\\
\indent Suppose to the contrary that there is a sequence $(u_n)$ in $K$ such that $u_n\neq 0$ for all $n\in\N$ and $v_n:=A u_n\in F(u_n)$. From \eqref{fi-growth 1} we see that the sequence $\|u_n\|^{-1}v_n$ is bounded. Therefore the sequence $(w_n)$, where $w_n:=J_1^A\big(\|u_n\|^{-1}(u_n+v_n)\big)=\|u_n\|^{-1}u_n$, has the convergent subsequence, wlog we may assume that  $w_n\to w$. \\
\indent On the other hand $v_n\in F\big(\|u_n\|w_n\big)$. Proposition \ref{Hadamard} yields that $\|u_n\|^{-1}v_n\to G_0\circ \Gamma_0(w)$ and, consequently, $w= J_1^A(w+G_0\circ\Gamma_0(w))$, i.e., $A w= G_0\circ\Gamma_0(w)$. Thus $w=0$ in view of the first part of Proposition \ref{war-spektrum}. This a contradiction with $\|w\|=1$.\\
\indent The homotopy invariance implies that $\deg_K\big(A,F;B_K(0,r)\big)=\deg_K\big(A,G_0\circ\Gamma_0,B_K(0,r)\big)$. To complete the proof it remains  to apply Theorem \ref{tw:wzordla_minusA+D_01}.\\
\indent (ii) The proof is similar to the above on; one considers the homotopy $H_\infty\circ \widetilde H_\infty$ and shows that there is $R>0$ such that all solution to problem $Au\in H_\infty\circ \widetilde H_\infty(u,t)$, $u\in K$, are contained in a large ball $B(0,R)$.
\end{proof}
Now we are prepared to establish the main theorem of this section.
\begin{Th}\label{tw:istnienie-inkluzja-rozn}  Under Assumption \ref{rd-system} and \ref{linass} suppose that
\begin{align*}\lambda_1\not\in\splus\left(D_0R_0^{-1}\right)\cup\splus\left(D_\infty R_\infty^{-1}\right)\;\; \text{and}\;\;
\Big(s\big(D_0R_0^{-1}\big)-\lambda_1\Big)\cdot \Big(s\big(D_\infty R_\infty^{-1}\big)-\lambda_1\Big)<0,\end{align*} then there exists at least one nontrival solution to Problem \eqref{eq:Abstr Incl intro} and \eqref{reac-diff1}.
\end{Th}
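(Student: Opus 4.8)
The plan is to derive the theorem from the two linearization computations of Theorem~\ref{tw:wzor-deg_diff0}, glued together by the additivity and existence properties of $\deg_K$ established in Theorem~\ref{main1}. By Assumption~\ref{rd-system}~(3) and the remark preceding \eqref{eq:Abstr Incl intro}, $\rho$ is a homeomorphism with $\rho(0)=0$, and $u$ solves \eqref{reac-diff1} if and only if $w:=\rho(u)\in K$ solves $Aw\in F(w)$, with $w\neq0$ precisely when $u\neq0$. Hence it suffices to produce a nonzero $w\in K$ with $Aw\in F(w)$.

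First I would fix two radii. Since $\lambda_1\notin\splus(D_0R_0^{-1})$, Theorem~\ref{tw:wzor-deg_diff0}~(i) yields a small $r>0$ for which $0$ is the only solution of $Aw\in F(w)$ in $B_K(0,r)$ and
\[\deg_K\big(A,F;B_K(0,r)\big)=\begin{cases}1&\text{if }s(D_0R_0^{-1})<\lambda_1,\\0&\text{if }s(D_0R_0^{-1})>\lambda_1.\end{cases}\]
As the blow-up argument behind that statement shows that nonzero solutions of $Aw\in F(w)$ cannot accumulate at the origin, we may shrink $r$ further so that, in addition, no solution lies on the sphere $\{\|w\|=r\}$. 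Since $\lambda_1\notin\splus(D_\infty R_\infty^{-1})$, Theorem~\ref{tw:wzor-deg_diff0}~(ii) yields $R>r$ such that every solution of $Aw\in F(w)$, $w\in K$, satisfies $\|w\|<R$ and
\[\deg_K\big(A,F;B_K(0,R)\big)=\begin{cases}1&\text{if }s(D_\infty R_\infty^{-1})<\lambda_1,\\0&\text{if }s(D_\infty R_\infty^{-1})>\lambda_1.\end{cases}\]

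Next I would decompose the large ball. Put $U_1:=B_K(0,r)$ and $U_2:=\{w\in K\mid r<\|w\|<R\}$; these are open in $K$ and disjoint, and since no coincidence point of $(A,F)$ has norm $r$, we have $\Coin(A,F;B_K(0,R))\subset U_1\cup U_2$. Additivity then gives
\[\deg_K\big(A,F;B_K(0,R)\big)=\deg_K(A,F;U_1)+\deg_K(A,F;U_2).\]
Because $s(D)=\max\splus(D)$, the hypothesis $\lambda_1\notin\splus(D_0R_0^{-1})\cup\splus(D_\infty R_\infty^{-1})$ yields $s(D_0R_0^{-1})\neq\lambda_1\neq s(D_\infty R_\infty^{-1})$, and then the sign condition $\big(s(D_0R_0^{-1})-\lambda_1\big)\big(s(D_\infty R_\infty^{-1})-\lambda_1\big)<0$ forces exactly one of these two numbers to lie above $\lambda_1$ and the other below. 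Reading off the two case distinctions above, exactly one of $\deg_K(A,F;U_1)$ and $\deg_K\big(A,F;B_K(0,R)\big)$ equals $1$ and the other $0$; consequently $\deg_K(A,F;U_2)=\pm1\neq0$. By the existence property there is $w\in U_2$ with $Aw\in F(w)$, and $\|w\|>r>0$, so $w\neq0$; then $u:=\rho^{-1}(w)\neq0$ is the sought nontrivial solution of \eqref{reac-diff1}, equivalently of \eqref{eq:Abstr Incl intro}.

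I expect the only delicate point to be the choice of the inner radius: additivity requires $\partial U_1=\{\|w\|=r\}$ to carry no coincidence point, which rests on the non-accumulation of nonzero solutions at $0$ furnished by the linearization argument for Theorem~\ref{tw:wzor-deg_diff0}~(i). The rest is routine bookkeeping with the degree axioms.
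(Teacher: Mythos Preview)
Your argument is correct and follows the same route as the paper: invoke Theorem~\ref{tw:wzor-deg_diff0} to obtain radii $0<r<R$ with $\deg_K(A,F;B_K(0,r))\neq\deg_K(A,F;B_K(0,R))$, and conclude via additivity and existence that the annulus carries a coincidence point. Your write-up is in fact considerably more detailed than the paper's two-line proof; the only superfluous step is the extra shrinking of $r$, since Theorem~\ref{tw:wzor-deg_diff0}~(i) already guarantees that $0$ is the \emph{only} solution in $B_K(0,r)$, so no solution can sit on the sphere $\{\|w\|=r\}$ once $r$ is small enough.
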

\begin{proof} We see that there are $0<r<R$ such that $\deg_K\big(A,F;B(0,r)\big)\neq\deg_K\big(A,F;B(0,R)\big)$. The additivity property of the degree implies that there is $u_0\in K$ such that $Au_0\in F(u_0)$ and $r<\|u_0\|<R$.\end{proof}
\begin{Rem}{\em Instead of condition (2) from Assumption \ref{rd-system} assume that, for $u\in \R^M_+$, if $v\in\vp(u)$ and $u_i=0$, then $v_i\geq 0$, $i=1,\ldots,M$. Then $\vp$ is {\em strongly} tangent, i.e. $\vp(u)\subset T_{\R^M_+}(u)$ for any $u\in\R^M_+$. Moreover, in this situation, conditions $(F_0)$ and $(F_\infty)$ from assumption \ref{linass} may be slightly relaxed. Namely instead of $(F_0)$ and $(F_\infty)$ suppose that there are two $M\times M$ quasi-nonnegative matrices $D_0,D_\infty$ such that
\begin{equation}\label{eq:tymczas2}\lim_{u\to 0}\frac{\dist\big(D_0u,\varphi(u)\big)}{|u|}=0,\;\;  \lim_{|u|\to \infty}\frac{\dist\big(D_\infty u,\varphi(u)\big)}{|u|}=0.\end{equation}
Then the conclusion of Theorem \ref{tw:istnienie-inkluzja-rozn} remains true.\\
\indent Indeed, in view of (\ref{eq:tymczas2}) there are a continuous function $\alpha\colon [0,\infty)\to[0,\infty)$ and numbers $\underline t<\overline t$ such that
\begin{gather*}\lim_{t\to 0}\frac{\alpha(t)}{t}=\lim_{t\to\infty}\frac{\alpha(t)}{t}=0,\quad \dist\big(D_0u,\varphi(u)\big)<\alpha\big(|u|\big)\textrm{ if }|u|<\underline t\quad\text{and}\quad \dist\big(D_\infty u,\varphi(u)\big)<\alpha\big(|u|\big)\textrm{ if }|u|>\overline t.\end{gather*}
The map $\psi\colon K\multi\R^M$ given for $u\in\R^M_+$  by \[\psi(u)=\begin{cases}\varphi(u)\cap \Big(D_0(u)+\alpha\big (|u|\big)\ov B\Big)& |u|<\underline t\\
\varphi(u)&\underline t\leq|u|\leq\overline t\\
\varphi(u)\cap \Big(D_\infty(u)+\alpha\big(|u|\big) \ov B\Big)& |u|>\overline t
\end{cases}\]
is upper semicontinuous with nonempty convex compact values and satisfies  conditions $(F_0)$ and $(F_\infty)$ with $\vp$ replaced by $\psi$. Therefore problem \eqref{reac-diff1} with $\vp$ replaced by $\psi$ has a nontrivial solution $u_0\in K$. Since for all $u\in K$ we have $\psi(u)\subset\vp(u)$, $u_0$ is the solution to the original problem.
}\end{Rem}

\section{Reaction-diffusion equations with discontinuities} \label{sect:reac-diff-nieciaglosc}
Results of the above section admit to study  the existence of solutions of reaction-diffusion systems with a discontinuous right-hand side.\\
\indent Consider a {\em possibly discontinuous} function $f\colon \R^M_+\to\R^M$  and the following reaction-diffusion system
\begin{equation} \label{eq:udc-uklad-rownan-f}
  \left\{ \begin{array}{ll} -(\Delta (\rho\circ u))(x)=f(u(x)) \\ u_i(x)\geq 0,\;x\in\Omega,\ i=1...m \\  u_{|\part\Omega}=0.\end{array}\right.
\end{equation}
In order t
\[apply the above approach one needs \text{he n }oion of a regularization allo\]ing to remove discontinuities. With  $f$ the so-called {\em Krasowski regularization}
\[r_K(f)(x): =\bigcap_{\varepsilon>0}\cl{\mathrm{conv}}f(B^+(x,\varepsilon)),\;\; x\in\R^M_+,\]
where $B^+(x,\eps)=\{y\in \R^M_+\mid |x-y|<\eps\}$, and the {\em Fillipov regularization}
\[r_F(f)(x): =\bigcap_{\e>0}\bigcap_{\mu(N)=0}\cl{\mathrm{conv}}f(B^+(x,\e)\setminus N),\] where $\mu$ stand for the Lebesgue measure, are associated. Obviously, 
\[\{f(x)\},r_F(f)(x)\subset r_K(f)(x)\text{ and } r_F(f)(x)=r_K(f)(x)=\{f(x)\}\] if $f$ is continuous in $x$.  It is immediate to show that $r_K(f)$ and $r_F(f)$ are usc with compact convex values.\\
\indent When solving \eqref{eq:udc-uklad-rownan-f} the common practise is to consider solutions in terms of regularization. Namely, a nonnegative function $u$ is  a \emph{solution} of \eqref{eq:udc-uklad-rownan-f} if it is the solution of the inclusion \eqref{reac-diff1}, where $\vp$ is a regularization of $f$ (comp. e.g. \cite{KCC, Fil, Bothe1, Carl, CC}).

\indent Let conditions (4) from Assumption \ref{rd-system}, condition ($R$) from Assumption \ref{linass} be satisfied and
\begin{enumerate}
\item [(a)] there is $c>0$ such that $|f(u)|\leq c(1+|u|)$ for $u\in\R^m_+$;
\item [(b)] $f(u)\in T_{\R^m_+}(u)$ dla $u\in\R^m_+$.
\item [($f$)] $f(u)=D_0u+\beta_0(u)=D_\infty u+\beta_\infty(u)$, where $D_0, D_\infty$ are $M\times M$ quasi-nonnegative matrices and
    $$\beta_0(0)=0,\ \ \lim_{u\to 0}\frac{|\beta_0(u)|}{|u|}=0,\quad \lim_{|u|\to \infty}\frac{|\beta_\infty(u)|}{|u|}=0.$$
\end{enumerate}

In view ($f$) and the definition of regularizations we see that $\varphi=r_K(f)$ or $\vp=r_F(f)$ satisfies the conditions ($F_0$) and ($F_\infty$) from Assumption \ref{linass}. In case of Krasowski regularization condition (b) above implies the tangency of $\varphi$ to $\R^m_+$, i.e. condition (2) in Assumption \ref{rd-system}. In case of Filippov regularization the same implication may not to be true. To overcome this difficulty one can assume additionally the continuity of $f$ at points from the boundary of $\R^m_+$. \\
\indent Assuming that the regularization $\varphi$ is tangent to $\R^m_+$ we get the following result.
\begin{Th}\label{tw:istnienie-nieciaglosc} Assume that $\lambda_1\not\in\splus(D_0R_0^{-1})\cup\splus(D_\infty R_\infty^{-1})$.
If $(s(D_0R_0^{-1})-\lambda_1)\cdot(s(D_\infty R_\infty^{-1})-\lambda_1)<0$,
then there exists at least one nontrivial solution of the problem (\ref{eq:udc-uklad-rownan-f}).\hfill $\square$
\end{Th}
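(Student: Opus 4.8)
The plan is to read the statement off Theorem~\ref{tw:istnienie-inkluzja-rozn}. I would put $\vp:=r_K(f)$, or $\vp:=r_F(f)$ in the case where $f$ is in addition continuous at the points of $\part\R^M_+$; in either case $\vp\colon\R^M_+\multi\R^M$ is usc with nonempty compact convex values, and, by the adopted notion of solution, a nonnegative $u$ is a solution of \eqref{eq:udc-uklad-rownan-f} precisely when it solves \eqref{reac-diff1} with this $\vp$. So it suffices to verify that $\vp$ fulfils Assumptions~\ref{rd-system} and \ref{linass} with the matrices $D_0,D_\infty,R_0,R_\infty$ of hypotheses (f) and ($R$), because then the spectral conditions in the statement are verbatim those required by Theorem~\ref{tw:istnienie-inkluzja-rozn} — the quantities $\splus(\cdot)$ and $s(\cdot)$ depending only on the matrices — and that theorem directly produces a nontrivial solution of \eqref{reac-diff1}, hence of \eqref{eq:udc-uklad-rownan-f}.

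For Assumption~\ref{rd-system}: conditions (3) and (4) on $\rho$, as well as condition ($R$), are among the standing hypotheses. The sublinear growth \eqref{fi-growth} for $\vp$ I would obtain from (a): for each $\eps>0$ the set $f(B^+(u,\eps))$ lies in the closed ball of radius $c(1+|u|+\eps)$, hence so does $\cl\conv f(B^+(u,\eps))$, and intersecting over $\eps>0$ gives $\sup_{v\in\vp(u)}|v|\leq c(1+|u|)$. The weak tangency \ref{rd-system}(2), i.e. $\vp(u)\cap T_{\R^M_+}(u)\neq\emptyset$, is exactly what the discussion preceding the theorem derives from (b): for $r_K(f)$ one combines $f(y)\in T_{\R^M_+}(y)$ for $y$ near $u$ with the description $T_{\R^M_+}(y)=\{v\mid v_i\geq 0\ \text{whenever}\ y_i=0\}$, while for $r_F(f)$ the same conclusion relies on the extra continuity of $f$ on $\part\R^M_+$.

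For Assumption~\ref{linass}: hypothesis (f) gives $f(u)=D_0u+\beta_0(u)$ with $\beta_0(0)=0$ and $|\beta_0(u)|/|u|\to 0$ as $u\to 0$, whence $f$ is continuous at $0$ with $f(0)=0$, so $\vp(0)=\{0\}$. Setting $\vp_0(u):=\vp(u)-D_0u$, the affine translation $v\mapsto v-D_0u$ identifies $\vp_0(u)$ with the analogous regularization of $\beta_0$ at $u$, so $\sup\{|v|\mid v\in\vp_0(u)\}\leq\sup_{|y-u|<\eps}|\beta_0(y)|=o(|u|)$, which is \eqref{zal 0}; this yields $(F_0)$, and the identical argument at infinity yields $(F_\infty)$, while ($R$) is assumed outright. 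Once this is in place I would invoke Theorem~\ref{tw:istnienie-inkluzja-rozn} for the pair $(A,F)$ with $F$ the Nemytskii operator $N_{\vp\circ\gamma}$: it produces a nontrivial $w_0\in K$ with $Aw_0\in F(w_0)$, and then $u_0:=\gamma(w_0)\neq 0$ solves \eqref{reac-diff1} with this $\vp$, i.e. is a nontrivial solution of \eqref{eq:udc-uklad-rownan-f}.

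I expect the only step that is not pure bookkeeping to be the tangency verification for $\vp=r_F(f)$ near $\part\R^M_+$, which is precisely why the additional continuity hypothesis on $f$ is imposed, together with the (soft but not entirely automatic) fact that $r_K$ and $r_F$ commute, up to an $o(|u|)$ error as $u\to 0$ and as $|u|\to\infty$, with splitting off the linear parts $D_0u$ and $D_\infty u$ — this being the mechanism behind the transfer of $(F_0)$ and $(F_\infty)$ from $f$ to $\vp$.
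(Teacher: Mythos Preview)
Your proposal is correct and follows exactly the route the paper takes: the paper's proof consists of the paragraph preceding the statement, which notes that $\vp=r_K(f)$ (or $r_F(f)$ under the extra boundary-continuity hypothesis) satisfies $(F_0)$, $(F_\infty)$ and the tangency condition, and then the theorem is stated with a $\square$ as an immediate consequence of Theorem~\ref{tw:istnienie-inkluzja-rozn}. You have simply spelled out in more detail the verifications the paper leaves implicit.
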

Above solutions to \eqref{eq:udc-uklad-rownan-f} were understood in the sense of the Krasowski or Filippov regularizations. The natural question is whether u such that  $-\Delta(\rho\circ u)\in\varphi(u)$ satisfies $-\Delta(\rho\circ u)=f(u)$ almost everywhere. Such solutions will be called {\em primitive}. It is not difficult to find examples showing that in general solutions are not primitive. However we have the following
\begin{Th}\label{tw:pierwotne_rozwiazania} If the set of discontinuities of $f$ is at most countable and
\begin{equation}\label{eq:tmp17}0\not\in\varphi(u)\setminus\{f(u)\}\end{equation} for all $u\in\R^m_+$, then any solution $u$ of (\ref{eq:udc-uklad-rownan-f}) in the sense of regularization $\varphi=r_K(f)$ or $r_F(f)$ is a primitive solution.
\end{Th}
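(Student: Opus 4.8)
The plan is to partition $\Omega$ according to whether $f$ is continuous at the value $u(x)$, and to show that on the remaining (a priori nonempty, though measure-theoretically thin) part the weak Laplacian of $\rho\circ u$ vanishes, which together with hypothesis \eqref{eq:tmp17} forces the selection to be exactly $f\circ u$ there. So let $u$ be a solution of \eqref{eq:udc-uklad-rownan-f} in the sense of the regularization $\varphi\in\{r_K(f),r_F(f)\}$: by definition $w:=\rho\circ u\in H^1_0(\Omega,\R^M)$ and there is $v\in L^2(\Omega,\R^M)$ with $v(x)\in\varphi\big(u(x)\big)$ for a.e.\ $x$ and $-\Delta w=v$ in the weak sense; I must prove $v(x)=f\big(u(x)\big)$ a.e.

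First I would handle the set of continuity. Let $\mathcal D\subset\R^M_+$ be the at most countable set of discontinuity points of $f$, put $E_d:=u^{-1}(\{d\})$ for $d\in\mathcal D$ (these are measurable since $w$ is measurable and $\rho$ is a homeomorphism), and set $S:=\{x\mid u(x)\in\mathcal D\}=\bigcup_{d\in\mathcal D}E_d$. For $x\notin S$ the map $f$ is continuous at $u(x)$, so the identity for regularizations recalled just before the theorem gives $\varphi\big(u(x)\big)=\{f(u(x))\}$, hence $v(x)=f(u(x))$. Thus $v=f\circ u$ a.e.\ on $\Omega\setminus S$, and since $\mathcal D$ is countable it remains only to treat each $E_d$ with $\mu(E_d)>0$ separately.

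The core step is to show $\Delta w=0$ a.e.\ on $E_d$, hence $v=-\Delta w=0$ a.e.\ there. On $E_d$ the function $w$ is a.e.\ equal to the constant vector $\rho(d)$. To exploit this I would first upgrade regularity: since $-\Delta w=v\in L^2_{\mathrm{loc}}(\Omega,\R^M)$ weakly, interior elliptic regularity yields $w\in H^2_{\mathrm{loc}}(\Omega,\R^M)$. Then I would invoke the classical lemma that a Sobolev function has vanishing weak gradient almost everywhere on any set on which it is almost everywhere constant: applied to $w$ it gives $\nabla w=0$ a.e.\ on $E_d$; applied again to each component $\partial_i w\in H^1_{\mathrm{loc}}(\Omega,\R^M)$ on the set $E_d$ (on which $\partial_i w=0$) it gives that all second-order weak derivatives of $w$, hence $\Delta w$, vanish a.e.\ on $E_d$. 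Now pick any $x\in E_d$ with $v(x)=0$ and $v(x)\in\varphi\big(u(x)\big)=\varphi(d)$; such $x$ exists because $\mu(E_d)>0$. Then $0\in\varphi(d)$, so \eqref{eq:tmp17} forces $0\in\{f(d)\}$, i.e.\ $f(d)=0$, and therefore $v\equiv 0=f(d)=f\circ u$ a.e.\ on $E_d$. Taking the countable union over $d\in\mathcal D$ and combining with the previous paragraph gives $v=f\circ u$ a.e.\ on $\Omega$, that is $-\Delta(\rho\circ u)=f(u)$ a.e., so $u$ is a primitive solution.

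I expect the only non-routine point, and the main obstacle, to be the implication ``$w$ is a.e.\ constant on $E_d$'' $\Rightarrow$ ``$\Delta w=0$ a.e.\ on $E_d$'': it requires the $H^2_{\mathrm{loc}}$ regularity of $w$ together with an iterated use of the level-set (Stampacchia-type) lemma for Sobolev functions, and one must note that subtracting the constant $\rho(d)$ — which destroys the Dirichlet boundary condition — is harmless here because the claim is purely interior and local. The remaining ingredients (countability of $\mathcal D$, measurability of the $E_d$, the elementary use of \eqref{eq:tmp17}, and the regularization identity at continuity points) are immediate.
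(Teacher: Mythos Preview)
Your argument is correct and matches the paper's proof: both iterate the Sobolev level-set lemma (the paper cites \cite[Lemma 7.7]{GT}) on each level set $E_d=u^{-1}(\{d\})$, $d\in\mathcal D$, to obtain $\Delta w=0$ a.e.\ there, and then use \eqref{eq:tmp17} to force $f(d)=0$. Your version is in fact slightly tidier in working with $w=\rho\circ u$ and invoking only interior $H^2_{\mathrm{loc}}$ regularity, whereas the paper's proof writes $u$ in place of $\rho\circ u$ and asserts global $H^2$.
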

\begin{proof} The assumption implies that $\varphi(u)=\{f(u)\}$ fails to hold for at most countably many $u\in \R^m_+$. Let $u\colon\Omega\to\R^m_+$ be a solution. It is known (see \cite{Evans}) that $u\in D(\Delta)=H^2(\Omega,\R^m)\cap H^1_0(\Omega,\R^m)$. Let $u_0\in\R^M_+$ be a fixed point satisfying the relation $\varphi(u_0)\neq\{f(u_0)\}$. Put $\Omega_{u_0}:=\{x\in\Omega\mid u(x)=u_0\}$. The function $u$ is constant on $\Omega_{u_0}$, therefore (\cite[Lemma 7.7]{GT})
$\nabla u(x)=0$ for a.a. $x\in\Omega_{u_0}$. The same argument applied to $\nabla u\in H^1(\Omega,\R^{m\times m})$ gives that
$-\Delta u(x)=0$ for a.a. $x\in\Omega_{u_0}$. On the other hand
$0=-\Delta u(x)\in \varphi(u(x))=\varphi(u_0)$ for a.a. $x\in\Omega_{u_0}$.
The assumption (\ref{eq:tmp17}) implies that the set $\Omega_{u_0}$ is of measure zero or $f(u_0)=0$. In both cases the equation \eqref{eq:udc-uklad-rownan-f} is satisfied for almost all $x\in\Omega_{u_0}$.\\
\indent Summarising, the equation \eqref{eq:udc-uklad-rownan-f} is satisfied for almost all $x\in\Omega$ for which $\varphi(u(x))\neq\{f(u(x))\}$. For the remaining points the equation is trivially satisfied.\end{proof}

\noindent \textbf{Acknowledgement:} The first author was supported by the internal grant of the Lodz University of Technology.


\end{document}